\title[Abundance Conjecture]{Log pluricanonical 
representations and abundance conjecture}
\author{Osamu Fujino} 
\author{Yoshinori Gongyo}
\date{2012/4/5, version 1.78}
\address{Department of Mathematics, Faculty of Science, 
Kyoto University, Kyoto 606-8502, Japan}
\email{fujino@math.kyoto-u.ac.jp}
\address{Graduate School of Mathematical Sciences, 
The University of Tokyo, 3-8-1 Komaba, 
Meguro, Tokyo, 153-8914 Japan.}
\email{gongyo@ms.u-tokyo.ac.jp}
\subjclass[2010]{Primary 14E30; Secondary 14E07}
\keywords{pluricanonical representation, 
abundance conjecture, minimal model program}
\newcommand{\Exc}[0]{\operatorname{Exc}}
\newcommand{\Supp}[0]{\operatorname{Supp}}
\newcommand{\Bir}[0]{\operatorname{Bir}}
\newcommand{\Bim}[0]{\operatorname{Bim}}
\newcommand{\Nklt}[0]{\operatorname{Nklt}}
\newcommand{\id}[0]{\operatorname{id}}
\newcommand{\Aut}[0]{\operatorname{Aut}}
\newtheorem{thm}{Theorem}[section]
\newtheorem{lem}[thm]{Lemma}
\newtheorem{cor}[thm]{Corollary}
\newtheorem{prop}[thm]{Proposition}
\newtheorem{conj}[thm]{Conjecture}
\theoremstyle{definition}
\newtheorem{ex}[thm]{Example}
\newtheorem{defn}[thm]{Definition}
\newtheorem{rem}[thm]{Remark}
\newtheorem*{ack}{Acknowledgments}       
\newtheorem{say}[thm]{}
\newtheorem{case}{Case}
\begin{document}

\maketitle 

\begin{abstract}
We prove the finiteness of 
log pluricanonical representations 
for projective log canonical pairs 
with semi-ample log canonical divisor. 
As a corollary, we obtain that 
the log canonical divisor of a projective semi log canonical 
pair is semi-ample if and only if so is the log canonical divisor 
of its normalization. 
We also treat many other applications.  

\end{abstract}

\tableofcontents

\section{Introduction}

The following theorem is one of the main results of this paper (cf.~Theorem 
\ref{semi-ample1}). 
It is a solution of the conjecture raised in \cite{fujino-abundance} 
(see \cite[Conjecture 3.2]{fujino-abundance}). 
For the definition of the {\em{log pluricanonical 
representation}} $\rho_m$, see 
Definitions \ref{B-bir} and \ref{B-repre} below. 

\begin{thm}[{cf.~\cite[Section 3]{fujino-abundance}, 
\cite[Theorem B]{gongyo-aban}}]\label{main-thm} 
Let $(X,\Delta)$ be a projective log canonical pair. 
Suppose 
that $m(K_X+\Delta)$ is 
Cartier and that $K_X+\Delta$ is semi-ample.  
Then $\rho_{m}(\mathrm{Bir}(X,\Delta))$ is a finite group. 
\end{thm}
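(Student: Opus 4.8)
The plan is to exhibit $\rho_m(\Bir(X,\Delta))$ as a subgroup $G\subseteq GL(V)$ of the general linear group of $V=H^0(X,\mathcal{O}_X(m(K_X+\Delta)))$ that is simultaneously contained in a compact group and consists of elements of finite order; such a group is finite. First I would exploit semi-ampleness: a sufficiently divisible multiple of $|m(K_X+\Delta)|$ is base point free and, after Stein factorisation, defines the Iitaka morphism $f\colon X\to Y$ onto a normal projective variety $Y$ with $f_*\mathcal{O}_X=\mathcal{O}_Y$ and $m(K_X+\Delta)=f^*A$ for an ample Cartier divisor $A$ on $Y$. Since each $g\in\Bir(X,\Delta)$ preserves $m(K_X+\Delta)$, it descends to $\bar g\in\Aut(Y,A)$, and the identification $V\cong H^0(Y,A)$ shows that $\rho_m(g)$ depends only on $\bar g$; it therefore suffices to bound the induced action of $\Aut(Y,A)$ on $V$.

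For boundedness I would construct a $\Bir(X,\Delta)$-invariant positive-definite Hermitian form on $V$. When $(X,\Delta)$ is klt this is the $L^2$-pairing $\langle s,t\rangle=\int_X (s\,\overline{t})^{1/m}$, in which the pair $(s,t)$ of $m$-log-canonical forms produces an honest measure on $X$; the klt condition (all boundary coefficients strictly less than $1$) guarantees local integrability, and invariance is automatic because the measure $(s\,\overline{t})^{1/m}$ is canonically attached to the log canonical bundle, so a birational self-map of $(X,\Delta)$ pulls it back to itself outside a set of measure zero. Hence $G$ is contained in the unitary group of this form and in particular is relatively compact.

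To upgrade compactness to finiteness I would show that every $\rho_m(g)$ has finite order. Spreading the whole configuration out over a finitely generated $\mathbb{Z}$-algebra, one may assume that in a suitable basis the matrices $\rho_m(g)$ have entries in the ring of integers of a fixed number field; their eigenvalues are then algebraic integers, and by the invariance of the Hermitian form all archimedean conjugates of these eigenvalues have absolute value $1$, so they are roots of unity by Kronecker's theorem. Since a unitary operator is semisimple, each $\rho_m(g)$ has finite order, and as the orders are bounded (fixed matrix size, fixed number field) the resulting linear torsion group of bounded exponent is finite by the Burnside--Schur theorem. This settles the klt case.

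The main obstacle is the genuinely log canonical (non-klt) case: along the coefficient-one components of $\Delta$, that is along $\Nklt(X,\Delta)$, the integral defining $\langle\,,\rangle$ diverges, so the unitarity step breaks down. Here I would induct on $\dim X$. By adjunction the divisor $K_X+\Delta$ restricts to $\Nklt(X,\Delta)$, equipping its normalisation with an induced semi log canonical structure whose log canonical divisor is again semi-ample of strictly smaller dimension, to which the inductive hypothesis applies, and the restriction map on sections is $\Bir(X,\Delta)$-equivariant. The two points I expect to be the crux are (i) that $\Bir(X,\Delta)$ acts compatibly with the induced birational transformations of the non-klt locus, so that the lower-dimensional finiteness statements glue across the strata, and (ii) that this inductive input can be combined with the convergent pairing on the klt part to control all of $V$, so that the compact-plus-torsion argument can still be run. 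It is precisely this gluing over the normalisation and the lc centres that forces one to work with semi log canonical pairs, as in the corollary announced in the abstract.
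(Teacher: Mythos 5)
Your klt-case argument has two genuine gaps. First, the pairing $\langle s,t\rangle=\int_X (s\,\overline{t})^{1/m}$ does not exist for $m\geq 2$: the expression $(s\,\overline{t})^{1/m}$ has no canonical single-valued branch, so there is no invariant Hermitian form, only the invariant functional $\omega\mapsto\int_X(\omega\wedge\bar\omega)^{1/m}$ of Definition \ref{integrale-def} and Lemma \ref{integrable}. Invariance of this functional does make the image of $\rho_m$ a bounded subgroup of $GL(V)$, but \emph{bounded plus element-wise torsion is not enough}: the group of all roots of unity inside $U(1)$ is an infinite bounded torsion group. What is really needed is a bound on the orders that is uniform in $g$ and $\omega$, and your justification (``fixed matrix size, fixed number field'' after spreading out) does not supply it, since $\Bir(X,\Delta)$ need not be finitely generated and each $g$ may a priori live over a larger field. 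The paper proves semisimplicity not via unitarity but by the divergence $\lim_{l\to\infty}\int_X((g^l)^*\varphi_1\wedge \overline{(g^l)^*\varphi_1})^{1/m}=\infty$ for a nontrivial Jordan block (Proposition \ref{semi-simpleness}), and obtains the uniform order bound $N_m$ geometrically: the incidence variety over $\mathbb{P}(H^0(X,m(K_X+\Delta)))$, a stratification admitting simultaneous resolutions, and the Betti number bound $\varphi(N_{m,\omega})\leq b_n(\widetilde Y_p)\leq b$ on the associated cyclic covers (Remark \ref{rem_betti}, Proposition \ref{algebraic-2}); only then does Burnside's theorem apply (Theorem \ref{finiteness-klt}).

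In the lc case, the two points you flag as ``the crux'' are indeed the actual content of the theorem, and your sketch would fail exactly where the paper's Case \ref{non-dom} begins. The paper splits along the Iitaka fibration $f\colon X\to Y$ according to whether $\llcorner\Delta^h\lrcorner\neq 0$. When the boundary has a horizontal coefficient-one part, $H^0(X,m(K_X+\Delta)-T)=0$ because $m(K_X+\Delta)\sim_{\mathbb{Q},Y}0$, so restriction to $T=\llcorner\Delta\lrcorner$ is \emph{injective}, and since $g\in\Bir(X,\Delta)$ does not induce a birational self-map of $T$ (Example \ref{ex-q}), one needs the lc-center chasing of Lemma \ref{newlem} to produce, after finitely many steps, a genuine $B$-birational map of a disjoint union of lc centers and conclude $(g^*)^l=\id$ by induction on dimension. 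When $\llcorner\Delta^h\lrcorner=0$, restriction to the non-klt locus is \emph{not} injective, and your ``convergent pairing on the klt part'' does not exist: sections still have full log poles along the vertical coefficient-one divisors, so the $L^{2/m}$ integral still diverges. The paper's fix is the canonical bundle formula: write $K_{X'}+\Delta_{X'}=f'^*(K_{Y'}+\Delta_{Y'}+M)$ with $K_{Y'}+\Delta_{Y'}+M$ big on the base, show every $g$ descends to $g_{Y'}\in\Bir(Y',\Delta_{Y'})$ (Ambro, Koll\'ar), deduce $g^*f'^*\Delta_{Y'}^{=1}+E_g\geq f'^*\Delta_{Y'}^{=1}$, and thereby embed $\Bir(X,\Delta)$ into $\widetilde\Bir_{m'}\bigl(X',\Delta_{X'}-\frac{1}{m'}f'^*\Delta_{Y'}^{=1}\bigr)$, a \emph{subklt} pair with non-effective boundary --- precisely why the $\widetilde B$-representation formalism is developed, and why the big case (Theorem \ref{finiteness1}, via Kodaira's lemma and a $\Bir$-invariant section giving the equivariant embedding $(\spadesuit)$) is proved first as an intermediate step your outline also lacks. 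Without these mechanisms your induction does not close.
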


In Theorem \ref{main-thm}, 
we do not have to assume that $K_X+\Delta$ is semi-ample when 
$K_X+\Delta$ is big (cf.~Theorem \ref{finiteness1}). 
As a corollary of this fact, 
we obtain the finiteness of $\Bir (X, \Delta)$ when 
$K_X+\Delta$ is big. 
It is an answer to the question raised by Cacciola and 
Tasin. 

\begin{thm}[cf.~Corollary \ref{finiteness birational auto}]
Let $(X, \Delta)$ be a projective log canonical pair such that 
$K_X+\Delta$ is big. 
Then 
$\Bir (X, \Delta)$ is a finite group. 
\end{thm}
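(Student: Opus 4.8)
The plan is to derive the finiteness of $\Bir(X,\Delta)$ as an almost immediate consequence of Theorem \ref{main-thm} (in its big version, Theorem \ref{finiteness1}), by showing that when $K_X+\Delta$ is big, the birational automorphism group $\Bir(X,\Delta)$ injects into the image group $\rho_m(\Bir(X,\Delta))$ for a suitably chosen Cartier index $m$. The key point is that a big and semi-ample (or, in the big case handled by Theorem \ref{finiteness1}, merely big) log canonical divisor $K_X+\Delta$ carries enough information to reconstruct the birational map: a birational self-map $g \in \Bir(X,\Delta)$ that acts trivially on the log pluricanonical forms $H^0(X, \mathcal{O}_X(m(K_X+\Delta)))$ for $m$ large must be the identity.

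First I would fix $m$ so that $m(K_X+\Delta)$ is Cartier and, since $K_X+\Delta$ is big, so that the sections of $\mathcal{O}_X(m(K_X+\Delta))$ define a birational map to projective space onto the log canonical model $X_{\mathrm{can}} = \mathrm{Proj} \bigoplus_{\ell \ge 0} H^0(X, \mathcal{O}_X(\ell m(K_X+\Delta)))$. Each element $g \in \Bir(X,\Delta)$ induces, by the very definition of the log pluricanonical representation $\rho_m$ (Definition \ref{B-repre}), a linear automorphism $\rho_m(g)$ of $H^0(X, \mathcal{O}_X(m(K_X+\Delta)))$ compatible with the ring structure. I would then argue that the kernel of $\rho_m$ is trivial: if $\rho_m(g) = \id$, then $g$ acts as the identity on the whole log canonical ring, hence descends to the identity on the canonical model $X_{\mathrm{can}}$; since $K_X+\Delta$ is big, the natural map $X \dashrightarrow X_{\mathrm{can}}$ is birational, so $g$ itself is the identity in $\Bir(X,\Delta)$. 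Thus $\rho_m$ is injective.

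Combining injectivity of $\rho_m$ with Theorem \ref{finiteness1}, which asserts that $\rho_m(\Bir(X,\Delta))$ is a finite group when $K_X+\Delta$ is big (without the separate semi-ampleness hypothesis), immediately yields that $\Bir(X,\Delta)$ itself is finite, being isomorphic to its finite image.

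The main obstacle I anticipate is the injectivity step, specifically verifying carefully that triviality of the action on pluricanonical sections forces $g$ to be the identity \emph{as a birational map}, rather than merely fixing the image in the canonical model set-theoretically. One must check that the identification of $X$ with its log canonical model over the locus where $K_X+\Delta$ is big (the rational map is birational because $K_X+\Delta$ is big, so the associated rational map is generically injective for $m \gg 0$) genuinely pins down $g$, paying attention to the log canonical, non-normal subtleties encoded in $\Bir(X,\Delta)$ and to the precise normalization conventions in Definition \ref{B-bir}. Once this compatibility is established, the rest is formal.
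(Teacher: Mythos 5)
Your proposal is correct and takes essentially the same route as the paper's proof of Corollary \ref{finiteness birational auto}: both use that bigness of $K_X+\Delta$ makes $\Phi_{|m(K_X+\Delta)|}:X\dashrightarrow V$ birational for suitable $m\gg 0$, deduce that a $B$-birational map acting trivially on the (projectivized) space of sections must be the identity, and conclude by the finiteness of the image under Theorem \ref{finiteness1}. One small caution: phrase the injectivity via the image $V$ of $\Phi_m$ (as the paper does) rather than via $X_{\mathrm{can}}=\mathrm{Proj}\bigoplus_{\ell\geq 0}H^0(X,\mathcal O_X(\ell m(K_X+\Delta)))$, since finite generation of the log canonical ring of an lc pair is a nontrivial input that the argument does not need.
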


In the framework of \cite{fujino-abundance}, 
Theorem \ref{main-thm} will play important roles in the 
study of Conjecture \ref{abun} (see \cite{fujita}, \cite{AFKM}, \cite{k1}, 
\cite{kemamc}, \cite{fujino-abundance}, \cite{fujino-surface}, 
\cite{gongyo-aban}, and so on). 

\begin{conj}[(Log) abundance conjecture]\label{abun} 
Let $(X,\Delta)$ be a 
projective semi log canonical pair such that $\Delta$ is a 
$\mathbb{Q}$-divisor. 
Suppose that $K_X+\Delta$ is nef. Then $K_X+\Delta$ is semi-ample.
\end{conj}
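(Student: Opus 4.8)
The plan is to reduce the semi log canonical statement to the log canonical case by passing to the normalization, and then to use the finiteness of log pluricanonical representations (Theorem~\ref{main-thm}) to carry out the descent. First I would take the normalization $\pi\colon \bar{X}\to X$ and write $K_{\bar X}+\bar\Delta=\pi^{*}(K_X+\Delta)$, where $\bar\Delta$ is the sum of the strict transform $\pi_{*}^{-1}\Delta$ and the conductor divisor; by the definition of a semi log canonical pair, $(\bar X,\bar\Delta)$ is then a (possibly disconnected) projective log canonical pair. Since $K_X+\Delta$ is nef and $\pi$ is finite, $K_{\bar X}+\bar\Delta$ is nef as well, and it suffices to promote its semi-ampleness back down to $X$.

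Second, I would invoke the log canonical case of abundance for the pair $(\bar X,\bar\Delta)$: as this pair is lc with $K_{\bar X}+\bar\Delta$ nef, that case yields semi-ampleness of $K_{\bar X}+\bar\Delta$. Fixing $m$ so that $m(K_X+\Delta)$ is Cartier, I would then restrict $m(K_{\bar X}+\bar\Delta)$ to the conductor $D\subset\bar X$ and its normalization $D^{\nu}$, on which the slc gluing data is encoded by a $B$-birational involution $\sigma$ compatible with the log canonical structure $(D^{\nu},\Theta)$, where $\Theta$ is the different.

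Third comes the descent. A section of $m(K_{\bar X}+\bar\Delta)$ descends to a section of $m(K_X+\Delta)$ precisely when its restrictions to the two branches of the conductor agree under $\sigma$; equivalently, it must be invariant under the log pluricanonical representation $\rho_m$ generated by $\sigma$. Here Theorem~\ref{main-thm}, applied to the log canonical pair $(D^{\nu},\Theta)$ (whose log canonical divisor is semi-ample by the previous step, being the restriction of the semi-ample $K_{\bar X}+\bar\Delta$), shows that $\rho_m(\Bir(D^{\nu},\Theta))$ is a \emph{finite} group. Hence, after replacing $m$ by a suitable multiple, the action becomes trivial, so every section over $\bar X$ is automatically compatible along the conductor and descends; since such sections already generate $m(K_{\bar X}+\bar\Delta)$ and $\pi$ is surjective, the descended sections generate $m(K_X+\Delta)$, proving semi-ampleness.

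The genuine obstacle is the log canonical input used in the second step: abundance for lc pairs is itself open in general, and the argument above is unconditional only insofar as that case is available (for instance when $\dim X\le 3$, when $K_X+\Delta$ is big, or when the numerical and Iitaka dimensions coincide). The contribution of the finiteness theorem is to make the slc-to-lc reduction completely rigorous: once semi-ampleness is known on the normalization, Theorem~\ref{main-thm} removes the only remaining obstruction---nontriviality of the gluing action on log pluricanonical sections---to pushing semi-ampleness back down to the semi log canonical pair.
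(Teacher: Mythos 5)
The statement you set out to prove is Conjecture~\ref{abun} itself, which is \emph{open}: the paper contains no proof of it, and indeed everything in Sections~\ref{sec1} and~\ref{sec5} is organized around reductions and conditional results. What your proposal actually contains --- as you honestly acknowledge in your final paragraph --- is not a proof of the conjecture but the reduction of the slc case to the lc case, assuming lc abundance as an input. That reduction is precisely the paper's Theorem~\ref{main-thm2} (= Proposition~\ref{prop3}), and your overall strategy matches the paper's: pass to the normalization, use semi-ampleness upstairs, and use the finiteness of log pluricanonical representations (Theorem~\ref{main-thm}) to handle the gluing along the conductor. The paper implements this by observing that the arguments of \cite[Section 4]{fujino-abundance} go through in all dimensions once Theorem~\ref{main-thm} is available; the paper's genuinely unconditional route toward Conjecture~\ref{abun} is instead the conditional Theorem~\ref{coj implication} (non-vanishing plus the extension conjecture imply abundance), which your proposal does not touch.

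Within the reduction itself there is one concrete error: the claim that, since $\rho_m(\Bir(D^{\nu},\Theta))$ is finite, ``after replacing $m$ by a suitable multiple, the action becomes trivial.'' Finiteness of the image for each $m$ does not trivialize the representation on multiples: for a smooth non-hyperelliptic curve $C$ of genus $g\geq 3$ with a nontrivial automorphism $\sigma$, the $m$-canonical map is an embedding for every $m\geq 3$, so $\rho_m(\sigma)\neq \id$ for \emph{all} $m\geq 3$, even though $\rho_m(\Aut(C))$ is finite. Consequently it is false that every section on $\bar X$ automatically becomes compatible along the conductor. The correct use of Theorem~\ref{main-thm} is weaker and subtler: one passes to the subspace of invariant (``pre-glued'') sections --- which is where finiteness enters, e.g.\ via averaging over the finite group generated by \emph{all} the induced $B$-birational maps among the lc centers, not just one involution $\sigma$ --- and one must then prove that these invariant sections extend from the conductor to $\bar X$ and still define a base-point-free system that contracts exactly the glued points. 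That extension-and-separation step is the substantial content of \cite[Section 4]{fujino-abundance} (using the MMP results of \cite[Section 2]{fujino-abundance}, cf.~\ref{27}), or alternatively of Koll\'ar's gluing theory as in \cite{hacon-xu-slc}; your proposal treats it as automatic, and as stated that step would fail.
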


Theorem \ref{main-thm} was settled for surfaces in \cite[Section 
3]{fujino-abundance} and for the case where $K_X+\Delta\sim _{\mathbb Q}0$ 
by \cite[Theorem B]{gongyo-aban}. 
In this paper, to carry out the proof of Theorem \ref{main-thm}, 
we introduce the notion of {\em{$\widetilde B$-birational 
maps}} and {\em{$\widetilde B$-birational representations}} 
for sub kawamata log terminal pairs, which is new and is indispensable 
for generalizing the arguments in \cite[Section 3]{fujino-abundance} 
for higher dimensional log canonical pairs. 
For the details, see Section \ref{sec3}. 

By Theorem \ref{main-thm}, 
we obtain a key result. 

\begin{thm}[{cf.~Proposition \ref{prop3}}]\label{main-thm2} 
Let $(X, \Delta)$ be a projective semi log canonical pair.  
Let $\nu:X^{\nu}\to X$ be the normalization. 
Assume that $K_{X^\nu}+\Theta=\nu^*(K_X+\Delta)$ is semi-ample. 
Then $K_X+\Delta$ is semi-ample. 
\end{thm}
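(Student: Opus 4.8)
The plan is to prove Theorem \ref{main-thm2} by reducing the semi-ampleness of $K_X+\Delta$ on the semi log canonical pair $X$ to gluing data on the normalization, where Theorem \ref{main-thm} controls the relevant automorphisms. First I would set up the standard correspondence between semi log canonical pairs and log canonical pairs with a prescribed involution: writing $\nu:X^\nu\to X$ for the normalization and $K_{X^\nu}+\Theta=\nu^*(K_X+\Delta)$, I would let $D$ denote the conductor divisor on $X^\nu$, so that $D$ is a reduced divisor contained in $\lfloor\Theta\rfloor$ and the normalization carries an involution $\iota:D^\nu\to D^\nu$ on the normalization of the conductor, encoding how the branches of $X$ are glued. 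The key point is that $X$ is recovered from the triple $(X^\nu,\Theta,\iota)$ by gluing, and a section of $m(K_X+\Delta)$ is precisely a section of $m(K_{X^\nu}+\Theta)$ whose restriction to $D^\nu$ is $\iota$-invariant.

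The main technical step is to show that for $m$ sufficiently divisible the natural map $H^0(X,\mathcal{O}_X(m(K_X+\Delta)))\to H^0(X^\nu,\mathcal{O}_{X^\nu}(m(K_{X^\nu}+\Theta)))$ has image equal to the $\iota$-invariant sections that descend through the gluing, and that the graded ring of these invariant sections is finitely generated and base-point free. Here I would invoke the hypothesis that $K_{X^\nu}+\Theta$ is semi-ample: it gives a morphism $f:X^\nu\to Z$ with $K_{X^\nu}+\Theta\sim_{\mathbb{Q}} f^*A$ for some ample $A$ on $Z$. The crucial compatibility is that the involution $\iota$ on $D^\nu$ must be compatible with $f$, i.e. it descends to the images, so that the semi-ample system on $X^\nu$ glues to a semi-ample system on $X$; this is exactly where the finiteness of the log pluricanonical representation enters.

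The hard part will be controlling the $B$-birational, or $\widetilde B$-birational, automorphisms of the log canonical centers along the conductor and showing that after passing to a sufficiently divisible $m$ the representation $\rho_m$ acts trivially on the relevant spaces of sections, so that $\iota$-invariance can be achieved simultaneously with descent along $f$. The strategy is to apply Theorem \ref{main-thm} to the pair $(X^\nu,\Theta)$ (and to its log canonical centers, restricting $K_{X^\nu}+\Theta$ there, which remains semi-ample by adjunction) to conclude that the group $\rho_m(\Bir(X^\nu,\Theta))$, and in particular the subgroup generated by the gluing involution, is finite. Replacing $m$ by a suitable multiple then makes this finite group act trivially on pluricanonical sections, so every section over $X^\nu$ that we need is automatically $\iota$-invariant. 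Once the invariant sections are shown to separate points and tangent directions in the appropriate sense (again using semi-ampleness on $X^\nu$ and the fact that $\iota$ respects the fibration $f$), they define a morphism from $X$ realizing $K_X+\Delta$ as the pullback of an ample divisor, proving semi-ampleness.

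I expect the genuine obstacle to be the compatibility between the gluing involution $\iota$ and the semi-ample fibration $f$: a priori $\iota$ need not preserve the fibers of $f$, and reconciling the gluing data with the morphism defined by $|m(K_{X^\nu}+\Theta)|$ is precisely the subtle descent problem that the finiteness of $\rho_m$ is designed to resolve. Everything else—adjunction to the conductor, finite generation of the invariant graded ring, and the recovery of $X$ by gluing—should follow from standard semi log canonical theory once this compatibility is established.
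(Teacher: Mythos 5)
Your overall architecture is the right one, and it is essentially what the paper intends: the paper's proof of Theorem \ref{main-thm2} (= Proposition \ref{prop3}) simply invokes the descent machinery of \cite[Section 4]{fujino-abundance}, which is precisely your picture --- sections of $m(K_X+\Delta)$ are sections of $m(K_{X^\nu}+\Theta)$ whose restrictions to the normalized conductor are invariant under the gluing involution $\iota$ and, more generally, compatible under all $B$-birational identifications among the lc strata, and Theorem \ref{main-thm} is what tames this group. But your pivotal step is a genuine gap: from the finiteness of $\rho_m(\Bir)$ you cannot conclude that ``replacing $m$ by a suitable multiple makes this finite group act trivially on pluricanonical sections.'' Finiteness is proved for each fixed $m$ separately, and there is no homomorphism relating the image of $\rho_m$ to that of $\rho_{km}$ through which orders could collapse, because the multiplication map $\mathrm{Sym}^k H^0(m(K+\Theta))\to H^0(km(K+\Theta))$ is in general far from surjective. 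Even for the involution $\iota$ itself, whose image $\rho_m(\iota)$ already has order at most two, the $(-1)$-eigenspace need not die in any multiple: the product of a $(+1)$- and a $(-1)$-eigenvector is again anti-invariant. The ``pass to a multiple, get a trivial action'' mechanism does work when all the relevant section spaces are one-dimensional, which is exactly the case $K_X+\Delta\sim_{\mathbb Q}0$ treated in \cite[Theorem B]{gongyo-aban}; in the general semi-ample case it fails, and with it your conclusion that every section upstairs is automatically $\iota$-invariant and that $\nu^*$ is bijective on $H^0$.

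What the argument the paper invokes actually does is weaker and suffices: one does not make all sections invariant, but shows that the subspace of invariant (``pre-admissible''/``admissible'') sections is still base-point free. Given the finite group $G$ obtained from Theorem \ref{main-thm} applied to the (possibly disconnected, cf.\ Remark \ref{rem310}) strata, for any point one picks a section $s$ with $g^*s$ nonvanishing there for every $g\in G$ (avoiding finitely many hyperplanes in $H^0$) and takes the norm $\prod_{g\in G}g^*s$, an invariant section of a multiple; one then extends such invariant sections from the conductor and deeper strata to $X^\nu$ by descending induction on the dimension of strata using vanishing/extension theorems (this is where \cite[Section 2]{fujino-abundance} and the semi-ampleness hypothesis enter), and finally the extended sections descend to $X$ and globally generate $m(K_X+\Delta)$. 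Your closing intuition --- that the real issue is compatibility of $\iota$ with the fibration $f$ --- is sound and is essentially the alternative route of \cite{hacon-xu-slc} via Koll\'ar's gluing theory, where finiteness of $\rho_m$ shows the equivalence relation induced on the base $Z$ is finite so that the quotient exists; but carrying that out needs the gluing machinery, not triviality of the representation. One further small correction: $\iota$ is not an element of $\Bir(X^\nu,\Theta)$; it is a $B$-birational automorphism of the normalized conductor equipped with its different, so Theorem \ref{main-thm} must be applied to the lower-dimensional strata pairs (where $K+\Theta$ restricts semi-amply by adjunction), as you half-note when you pass to lc centers.
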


By Theorem \ref{main-thm2}, Conjecture \ref{abun} is reduced to 
the problem for log canonical pairs. 
After we circulated this paper, Hacon and Xu proved a relative version of Theorem \ref{main-thm2} 
by using Koll\'ar's gluing theory 
(cf.~\cite{hacon-xu-slc}). For the details, see Subsection \ref{subsec41} below. 

Let $X$ be a smooth projective $n$-fold. By our 
experience on the low-dimensional 
abundance conjecture, we think that we need the abundance 
theorem for projective semi log canonical pairs in dimension $\leq n-1$ in order 
to prove the abundance conjecture for $X$. 
Therefore, Theorem \ref{main-thm2} seems to be an important step 
for the inductive approach to the abundance conjecture. 
The general strategy for proving the abundance conjecture 
is explained in the introduction of \cite{fujino-abundance}. 
Theorem \ref{main-thm2} is a complete solution of Step (v) in \cite[0.~Introduction]
{fujino-abundance}. 

As applications of Theorem \ref{main-thm2} and \cite[Theorem 1.1]{fujino-bpf}, 
we have the following useful theorems. 

\begin{thm}[{cf.~Theorem \ref{thm1}}]\label{thm-a}
Let $(X, \Delta)$ be a projective log canonical pair such that 
$\Delta$ is a $\mathbb Q$-divisor. 
Assume that $K_X+\Delta$ is nef and 
log abundant. 
Then $K_X+\Delta$ is semi-ample. 
\end{thm}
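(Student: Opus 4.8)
The plan is to prove Theorem \ref{thm-a} by induction on $\dim X$, using the log abundance hypothesis to reduce the problem to the normalization of certain semi log canonical loci, where Theorem \ref{main-thm2} and the previously established results apply. Recall that ``log abundant'' means that $K_X+\Delta$ restricted to every log canonical center (including $X$ itself) is an abundant divisor, i.e.\ its numerical dimension equals its Iitaka dimension. The key observation is that the restriction $(K_X+\Delta)|_W$ to a log canonical center $W$ is again nef and log abundant for the induced pair structure on $W$, so the inductive hypothesis should yield semi-ampleness on all the strata.

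First I would reduce to the case where $(X,\Delta)$ is divisorial log terminal (or even kawamata log terminal away from a boundary) by taking a dlt blow-up, so that the non-klt locus is well understood as a union of log canonical centers. Then I would separate into two regimes according to the Iitaka dimension $\kappa(K_X+\Delta)$. When $\kappa = \dim X$, the divisor is big, and semi-ampleness follows from the base-point-free-type statement, namely \cite[Theorem 1.1]{fujino-bpf}, which is explicitly cited as an ingredient. When $\kappa < \dim X$, the idea is to run the Iitaka fibration argument: since $K_X+\Delta$ is nef and abundant, there is a morphism $f\colon X\to Z$ (after a suitable birational modification) such that $K_X+\Delta$ is numerically the pullback of an ample class on $Z$; the content is to promote this numerical pullback statement to an honest $\mathbb{Q}$-linear equivalence $K_X+\Delta \sim_{\mathbb Q} f^*A$ with $A$ ample, which is where semi-ampleness comes from.

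The main obstacle, as I see it, is handling the boundary $\lfloor \Delta\rfloor$ where the pair is strictly log canonical rather than klt. On the klt part one can invoke the known abundance/base-point-free results for the abundant nef case, but along the log canonical centers one must glue the semi-ample structures coming from the lower-dimensional pieces. This is precisely where Theorem \ref{main-thm2} enters: after establishing semi-ampleness of $K_W+\Delta_W$ on the normalization of each non-klt stratum (a semi log canonical pair in dimension $<\dim X$) by induction, one passes from the normalization back to the non-normal semi log canonical locus, and the semi-ample line bundle on the fibration base must be shown to descend compatibly. The finiteness of the log pluricanonical representation (Theorem \ref{main-thm}) is the tool that guarantees the gluing data can be simultaneously trivialized, so that a power of the line bundle is genuinely globally generated.

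Concretely, I would first prove semi-ampleness of $(K_X+\Delta)|_{\Nklt(X,\Delta)}$ by the inductive hypothesis applied to the semi log canonical structure on the non-klt locus, then use the abundant-nef reduction together with \cite[Theorem 1.1]{fujino-bpf} to obtain semi-ampleness on the open klt part, and finally glue these two along $\Nklt(X,\Delta)$. The delicate point is that gluing a semi-ample divisor from a closed subscheme and from the complement into a globally semi-ample divisor is not automatic; one needs an extension theorem guaranteeing that sections on $\Nklt(X,\Delta)$ lift to $X$, and that the base loci on the two pieces are compatible. I expect the bulk of the technical work, and the genuine difficulty, to lie in this extension-and-gluing step rather than in the fibration or the big case, both of which are essentially cited results.
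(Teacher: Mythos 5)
Your proposal follows essentially the same route as the paper: replace $(X,\Delta)$ by a dlt blow-up, apply induction on dimension together with Theorem \ref{main-thm2} (i.e.\ Proposition \ref{prop3}, via the finiteness of log pluricanonical representations) to get semi-ampleness of $(K_X+\Delta)|_{\llcorner\Delta\lrcorner}$ on the sdlt boundary, and then conclude on $X$. The only discrepancy is that the ``extension-and-gluing step'' you flag as the remaining genuine difficulty, as well as your separate big/non-big case analysis, are both already the exact content of the cited Fukuda-type theorem \cite[Theorem 1.1]{fujino-bpf} (nef and abundant on $X$ plus semi-ample on $\llcorner\Delta\lrcorner$ implies semi-ample), so no further work is needed there.
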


It is a generalization of the well-known theorem 
for kawamata log terminal pairs (see, 
for example, \cite[Corollary 2.5]{fujino-kawamata}). 
Theorem \ref{thm-b} may be easier to understand than Theorem 
\ref{thm-a}. 

\begin{thm}[{cf.~Theorem \ref{thm2}}]\label{thm-b}
Let $(X, \Delta)$ be an $n$-dimensional projective log canonical pair 
such that $\Delta$ is a $\mathbb Q$-divisor. 
Assume that 
the abundance conjecture holds for 
projective divisorial log terminal pairs in dimension $\leq n-1$. 
Then $K_X+\Delta$ is semi-ample if and only if 
$K_X+\Delta$ is nef and abundant. 
\end{thm}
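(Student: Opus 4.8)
The plan is to prove the two implications separately; the forward direction is routine, while the reverse direction carries all the content.

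For the easy direction, suppose $K_X+\Delta$ is semi-ample. Then it is certainly nef, and semi-ampleness produces a morphism $f\colon X\to Y$ with $K_X+\Delta\sim_{\mathbb Q}f^*A$ for some ample $\mathbb Q$-divisor $A$ on $Y$. Hence the Iitaka dimension of $K_X+\Delta$ equals $\dim Y$, which coincides with its numerical dimension, so $K_X+\Delta$ is abundant. Thus semi-ample implies nef and abundant.

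For the converse, assume $K_X+\Delta$ is nef and abundant. First I would reduce to the dlt case: take a dlt blow-up $\pi\colon(X',\Delta')\to(X,\Delta)$ with $K_{X'}+\Delta'=\pi^*(K_X+\Delta)$ and $(X',\Delta')$ dlt. Pulling back under a birational morphism preserves both nefness and abundance (the Iitaka and numerical dimensions are unchanged), and since $\pi$ is birational with $\pi_*\mathcal{O}_{X'}=\mathcal{O}_X$, semi-ampleness of $K_{X'}+\Delta'$ descends to semi-ampleness of $K_X+\Delta$ by the projection formula. So we may assume $(X,\Delta)$ is dlt. The heart of the argument is then to upgrade abundance to \emph{log} abundance, so that Theorem \ref{thm-a} applies. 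Let $W$ be any lc center of $(X,\Delta)$. Since the pair is dlt, $W$ is normal and dlt adjunction yields a dlt pair $(W,\Delta_W)$ of dimension at most $n-1$ with $(K_X+\Delta)|_W=K_W+\Delta_W$. Restriction preserves nefness, so $K_W+\Delta_W$ is nef, and by hypothesis the abundance conjecture holds for projective dlt pairs in dimension $\leq n-1$; hence $K_W+\Delta_W$ is semi-ample, in particular abundant. As this holds for every lc center and $K_X+\Delta$ is itself abundant by assumption, $K_X+\Delta$ is nef and log abundant. Theorem \ref{thm-a} then gives that $K_X+\Delta$ is semi-ample, and descending through $\pi$ completes the proof.

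I expect the main obstacle to lie in the dlt reduction together with the adjunction step: one must ensure the dlt blow-up exists in crepant form, that semi-ampleness genuinely descends through $\pi$, and that dlt adjunction correctly identifies each restriction $(K_X+\Delta)|_W$ with the log canonical divisor $K_W+\Delta_W$ of a dlt pair of strictly smaller dimension, so that the inductive hypothesis is applicable on the nose. Once log abundance is in hand, the conclusion is immediate from Theorem \ref{thm-a}.
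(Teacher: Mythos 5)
Your proposal is correct and follows essentially the same route as the paper's proof of Theorem \ref{thm2}: reduce to the dlt case by a dlt blow-up (Theorem \ref{dltblowup}), upgrade ``nef and abundant'' to ``nef and log abundant'' using the abundance hypothesis in dimension $\leq n-1$ on the lc centers, and conclude by Theorem \ref{thm1}. The only difference is that you spell out the dlt adjunction to lc centers and the descent of semi-ampleness through the blow-up, details the paper compresses into ``by the assumption, it is easy to see that $K_X+\Delta$ is nef and log abundant.''
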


We have many other applications. 
In this introduction, we explain only one of 
them. It is a generalization of \cite[Theorem 0.1]{fukuda2} 
and \cite[Corollary 3]{ckp}. It also contains 
Theorem \ref{thm-a}. For a further generalization, see Remark \ref{rem416}. 
\begin{thm}[{cf.~Theorem \ref{thm415}}]
Let $(X, \Delta)$ be a projective log canonical pair 
and let $D$ be a $\mathbb Q$-Cartier $\mathbb Q$-divisor 
on $X$ such that 
$D$ is nef and log abundant with respect to 
$(X, \Delta)$. 
Assume that $K_X+\Delta\equiv D$. 
Then $K_X+\Delta$ is semi-ample. 
\end{thm}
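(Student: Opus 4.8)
The plan is to deduce this from Theorem~\ref{thm-a} by upgrading the numerical hypothesis $K_X+\Delta\equiv D$ into the effective information that $K_X+\Delta$ is itself log abundant. Since $D$ is nef and $K_X+\Delta\equiv D$, the divisor $K_X+\Delta$ is nef, and restriction preserves numerical equivalence, so for every log canonical center $W$ of $(X,\Delta)$ (including $W=X$) one has $(K_X+\Delta)|_W\equiv D|_W$ and hence the numerical dimensions satisfy $\nu((K_X+\Delta)|_W)=\nu(D|_W)=\kappa(D|_W)$, the last equality because $D$ is log abundant with respect to $(X,\Delta)$. As $(K_X+\Delta)|_W$ is nef we always have $\kappa((K_X+\Delta)|_W)\le \nu((K_X+\Delta)|_W)$, so the entire problem reduces to the reverse inequality $\kappa((K_X+\Delta)|_W)\ge \nu(D|_W)$ on $X$ and on every center $W$; granting this, $K_X+\Delta$ is nef and log abundant and Theorem~\ref{thm-a} yields semi-ampleness.

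First I would pass to the abundance fibration of $D$. Because $D$ is nef and abundant, after a suitable resolution $\mu\colon Y\to X$ there is a surjective morphism with connected fibres $f\colon Y\to Z$ onto a normal projective variety with $\dim Z=\kappa(D)=\nu(D)$ and $\mu^*D\equiv f^*A$ for some big and nef $\mathbb Q$-divisor $A$ on $Z$. Writing $K_Y+\Gamma=\mu^*(K_X+\Delta)$ for the crepant (sub log canonical) pullback, on a general fibre $F$ we obtain $K_F+\Gamma_F=(K_Y+\Gamma)|_F\equiv f^*A|_F\equiv 0$. Here the sub-boundary $\Gamma_F$ may fail to be effective along $\mu$-exceptional divisors, which is precisely the situation that the $\widetilde B$-birational machinery of Section~\ref{sec3} is designed to handle.

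The key step is the numerically trivial case: since $(F,\Gamma_F)$ is projective sub log canonical with $K_F+\Gamma_F\equiv 0$, one concludes $K_F+\Gamma_F\sim_{\mathbb Q}0$. This is the abundance theorem for numerically trivial log canonical divisors, which rests on the finiteness of the log pluricanonical representation, i.e.\ Theorem~\ref{main-thm} (cf.\ \cite{gongyo-aban}); it is exactly here that the main result of this paper enters. Because $D$ is \emph{log} abundant, the same analysis may be run over the image in $Z$ of each log canonical center $W$, so that $K_X+\Delta$ becomes $\mathbb Q$-trivial on the fibres of the induced fibrations over $Z$ along every center.

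Finally I would descend along $f$. Using the canonical bundle formula and subadjunction for log canonical pairs together with the fibrewise $\mathbb Q$-linear triviality just obtained, $\mu^*(K_X+\Delta)$ should be $\mathbb Q$-linearly equivalent to $f^*L$ for a $\mathbb Q$-divisor $L$ on $Z$ with $L\equiv A$; since $A$ is big, $L$ is big and therefore $\kappa((K_X+\Delta)|_W)=\nu(D|_W)$ on every center $W$, which is the missing inequality, and Theorem~\ref{thm-a} then applies. I expect the descent to be the main obstacle: one must promote the fibrewise $\mathbb Q$-linear triviality to an honest $\mathbb Q$-linear (not merely numerical) equivalence on $Y$, controlling the monodromy of the pluricanonical forms along the fibres and gluing them compatibly over $Z$ — exactly the finiteness phenomenon quantified by Theorem~\ref{main-thm} — after which \cite[Theorem~1.1]{fujino-bpf} is what converts the resulting big, descended class into sections.
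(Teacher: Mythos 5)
Your overall reduction is exactly the paper's: show that $K_X+\Delta$ is itself nef and log abundant and then invoke Theorem~\ref{thm1}. But the way you propose to upgrade $K_X+\Delta\equiv D$ to abundance has a genuine gap at your declared key step. On a general fibre $F$ of the fibration $f\colon Y\to Z$ attached to $\mu^*D$, the pair $(F,\Gamma_F)$ is only \emph{sub} log canonical, and the assertion ``sublc with $K_F+\Gamma_F\equiv 0$ implies $K_F+\Gamma_F\sim_{\mathbb Q}0$'' is simply false for sub-pairs: on an elliptic curve $E$ with $P-Q$ non-torsion in $\mathrm{Pic}^0(E)$, the subklt pair $\left(E,\tfrac12 P-\tfrac12 Q\right)$ has numerically trivial but not $\mathbb Q$-linearly trivial log canonical divisor. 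The numerically trivial abundance theorem you appeal to (\cite{gongyo-aban}) requires an effective boundary, and the non-effectivity of $\Gamma_F$ is not a removable technicality here: a $\mu$-exceptional divisor $E_i$ carrying a negative coefficient of $\Gamma$ may dominate $Z$, and although $\mu(E_i)$ has codimension $\geq 2$ in $X$, the restriction $E_i\cap F$ can map onto a divisor of $\mu(F)$, i.e.\ it need not be exceptional for $\mu|_F$, so the negative part cannot be discarded fibrewise. The $\widetilde B$-birational machinery of Section~\ref{sec3} does not rescue this step either --- it yields finiteness of log pluricanonical representations, not abundance for numerically trivial sub-pairs --- and the subsequent descent via the canonical bundle formula inherits the same defect, since the rank-one condition defining an lc-trivial fibration can fail once $\Gamma_F$ has non-exceptional negative part.

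The paper sidesteps all of this with a much shorter argument: after a dlt blow-up (Theorem~\ref{dltblowup}) take a log resolution $f\colon Y\to X$ and write $K_Y+\Delta_Y=f^*(K_X+\Delta)+F$ with $F\geq 0$ exceptional and $\Delta_Y$ an \emph{effective} boundary, then quote the deep numerical-to-effective theorem \cite[Corollary 1]{ckp} to obtain
\begin{equation*}
\kappa(X,K_X+\Delta)=\kappa(Y,K_Y+\Delta_Y)\geq \kappa(Y,f^*D+F)=\kappa(X,D)=\nu(X,D)=\nu(X,K_X+\Delta),
\end{equation*}
so $K_X+\Delta$ is nef and abundant, and the same argument run on every lc center gives log abundance, after which Theorem~\ref{thm1} concludes. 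The moral is that passing from $\equiv$ to a Kodaira-dimension inequality is precisely the content of \cite{ckp} and cannot be recovered by soft fibration arguments; your strategy in effect attempts to reprove it, and the natural repair --- adding the effective exceptional divisor to the boundary before restricting to fibres, so that $K_F+\Gamma^+_F$ is numerically equivalent to the effective divisor $\Gamma^-|_F$ rather than to $0$ --- lands you exactly at a statement of CKP type, which you should then cite as the paper does.
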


The reader can find many applications and 
generalizations in Section \ref{sec1}. In Section \ref{sec5}, we will discuss the relationship 
among the various conjectures in the minimal model program. 
Let us recall the following two important conjectures. 

\begin{conj}[Non-vanishing conjecture]\label{intro-nonvani}
Let $(X, \Delta)$ be a projective 
log canonical pair such that $\Delta$ is an $\mathbb R$-divisor. 
Assume that $K_X+\Delta$ is pseudo-effective. 
Then there exists an effective $\mathbb R$-divisor $D$ on 
$X$ such that $K_X+\Delta\sim_{\mathbb{R}} D$. 
\end{conj}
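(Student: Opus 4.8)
The plan is to attack this through the standard sequence of reductions in the minimal model program, isolating the genuinely hard core at the end. First I would reduce from $\mathbb{R}$-divisors to $\mathbb{Q}$-divisors. Writing $\Delta=\sum a_i\Delta_i$, the log canonical condition and pseudo-effectivity of $K_X+\Delta$ both cut out closed convex regions in the space of boundary coefficients, so I would move $\Delta$ inside a rational polytope of log canonical boundaries, arrange the coefficients to be rational while keeping the class pseudo-effective, and recover the $\mathbb{R}$-statement from the $\mathbb{Q}$-statement by a convexity argument on the finitely many rational vertices.

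Next I would pass to a dlt pair. Taking a dlt blow-up $f\colon(Y,\Delta_Y)\to(X,\Delta)$ with $K_Y+\Delta_Y=f^*(K_X+\Delta)$, the pullback remains pseudo-effective, and any effective $D_Y\sim_{\mathbb{R}}K_Y+\Delta_Y$ pushes forward to the desired effective $D$ on $X$; so I may assume $(X,\Delta)$ is dlt. To descend to the klt case I would use adjunction along the components of $\lfloor\Delta\rfloor$ together with induction on dimension: restricting to a log canonical center produces sections there by the inductive hypothesis, which I would attempt to lift back to $X$ by the extension techniques of the program. I would then run a $(K_X+\Delta)$-MMP with scaling to replace $K_X+\Delta$ by a nef divisor on a birational model, reducing everything to the nef case.

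For the nef case the problem becomes numerical: it suffices to show that $\kappa_\sigma(K_X+\Delta)\geq 0$ forces an actual effective member, and here the classical non-vanishing theorems and the base-point-free philosophy are the natural tools. When $K_X+\Delta$ is big this is accessible, and indeed that regime is already handled by the methods surrounding the semi-ampleness results established earlier in this paper; combined with the abundance-type statements of the form of Theorem \ref{thm-b}, a proof of the nef case would close the logical loop among these conjectures.

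The hard part will be precisely this final nef step without bigness. Producing a single effective divisor from purely numerical positivity is not formal: the perturbation, dlt-blow-up, and MMP reductions above are consequences of tools already available, but they only relocate the difficulty to the nef regime, where the statement coincides with the genuinely open core of the abundance program. Overcoming it appears to require new input beyond the birational machinery used here, and I expect no purely formal argument to suffice at this point.
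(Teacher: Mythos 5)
The statement you were asked to prove is Conjecture \ref{intro-nonvani}, and the paper does not prove it: this is the non-vanishing conjecture of the minimal model program, which is open in general. The paper only uses it as a \emph{hypothesis} (Theorems \ref{thm110} and \ref{coj implication}) and records that by \cite[Section 8]{dhp} and \cite{g4} it can be reduced---modulo the global ACC conjecture and the ACC for log canonical thresholds---to the case of a smooth projective variety $X$ with $\Delta=0$, a reduction route different from yours. Your proposal correctly recognizes the situation: the nef, non-big step you isolate at the end is precisely the open core, and no argument in this paper or the cited literature closes it. So insofar as you offer only a chain of reductions plus an honest admission that the core remains open, your overall assessment is sound; but you have not proved the statement, and nobody has.

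Beyond that, three of your intermediate reductions have genuine problems. First, circularity: for pseudo-effective $K_X+\Delta$, termination of the minimal model program with ample scaling is only known \emph{assuming} the weak non-vanishing conjecture (Conjecture \ref{nonvan3}) in dimension $\leq n$ (see \ref{27} and \cite[Theorems 1.4, 1.5]{birkar}); by \cite[Theorem 1]{ckp} (cf.~Remark \ref{rem10}), Conjecture \ref{nonvan3} is equivalent to the very statement you are trying to prove, so ``run an MMP to reach a nef model'' presupposes the conclusion. Second, the step lifting sections from lc centers of the dlt pair back to $X$ is exactly the extension conjecture, Conjecture \ref{intro-ext}, itself open except in the plt case (\cite[Corollary 1.8]{dhp}) or when $K_X+\Delta$ is already semi-ample (Proposition \ref{inj imply ext}); this is precisely why Theorem \ref{thm110} must assume both Conjecture \ref{intro-nonvani} and Conjecture \ref{intro-ext}. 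Third, the $\mathbb R$-to-$\mathbb Q$ reduction by convexity is not formal: pseudo-effectivity cuts out a closed convex subset of the rational lc polytope of boundaries, and if $\Delta$ sits on the boundary of that subset at an irrational point, nearby rational boundaries need not have pseudo-effective log canonical divisor, so $\Delta$ need not be a convex combination of rational pseudo-effective vertices; handling this requires the more careful arguments of the type found in \cite{g4}, not a direct perturbation.
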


By \cite[Section 8]{dhp} and \cite{g4}, 
Conjecture \ref{intro-nonvani} can be reduced to the case when 
$X$ is a smooth projective variety and 
$\Delta=0$ by using the global ACC conjecture and 
the ACC for log canonical thresholds 
(see \cite[Conjecture 8.2 and Conjecture 8.4]{dhp}). 

\begin{conj}[{Extension conjecture for divisorial log terminal pairs 
(cf.~\cite[Conjecture 1.3]{dhp})}]\label{intro-ext}
Let $(X, \Delta)$ be an $n$-dimensional projective 
divisorial log terminal pair such that $\Delta$ is a $\mathbb Q$-divisor, 
$\llcorner  \Delta \lrcorner =S$, $K_X+\Delta$ is nef, 
and $K_X+\Delta\sim _{\mathbb Q}D\geq 0$ where $S\subset\Supp D$.
Then $$H^0(X,\mathcal O _X(m(K_X+\Delta)))\to H^0(S,  \mathcal O _S(m(K_X+\Delta)))  $$
is surjective for all sufficiently divisible integers $m\geq 2$. 
\end{conj}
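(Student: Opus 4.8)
The plan is to treat this as a lifting problem for log-pluricanonical forms and to attack it with the $L^2$-extension technology (Ohsawa--Takegoshi, Manivel, Siu) as refined by Hacon--McKernan and Paun--Takayama, which is the standard route to extension statements of this shape. First I would pass to a convenient birational model: taking a dlt blow-up followed by a log resolution, I may assume $(X,\Delta)$ is log smooth and reduce the problem to lifting across a single smooth component of $S=\llcorner \Delta\lrcorner$, the remaining components being handled inductively. Adjunction along such a component gives $(K_X+\Delta)|_S=K_S+\Delta_S$ with $\Delta_S$ the different, so the assertion becomes that each $\sigma\in H^0(S,\mathcal O_S(m(K_S+\Delta_S)))$ extends to $H^0(X,\mathcal O_X(m(K_X+\Delta)))$ for all sufficiently divisible $m$.

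The positivity input comes next. Since $K_X+\Delta$ is nef and $K_X+\Delta\sim_{\mathbb Q}D\geq 0$ with $S\subset \Supp D$, I would equip $K_X+\Delta$ with a singular hermitian metric $h$ of semipositive curvature current: combine a metric of minimal singularities for the nef class $K_X+\Delta$ (obtained by perturbing to $K_X+\Delta+\varepsilon A$ with $A$ ample and letting $\varepsilon\to 0$) with the effective representative $D$, arranging that $h|_S$ is comparable to the corresponding minimal-singularity metric on $S$. With such a metric in hand, I would run the extension machinery to lift $\sigma$, the crucial hypothesis being that $\sigma$ is $L^2$ against $h|_S$ and that the asymptotic multiplier ideal of $m(K_X+\Delta)$ along $S$ imposes no obstruction.

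The hard part---and the reason Conjecture \ref{intro-ext} is open rather than a theorem---is precisely the comparison of multiplier ideals under restriction: one must show that the asymptotic multiplier ideal of $|m(K_X+\Delta)|$ restricted to $S$ agrees with the intrinsic asymptotic multiplier ideal of $|m(K_S+\Delta_S)|$ on $S$, i.e. an invariance-of-log-plurigenera statement in the nef (and possibly non-big) regime. When $K_X+\Delta$ is big this is accessible via the Hacon--McKernan extension theorem, where the big part supplies the ample positivity along $S$ needed to close the $L^2$ estimates. The genuine obstacle is the boundary case in which $K_X+\Delta$ is merely nef: the perturbation by $\varepsilon A$ need not survive restriction to $S$, no ample room is available there, and controlling the limit as $\varepsilon\to 0$ appears to require the full strength of the minimal model program together with abundance-type inputs---exactly the circle of conjectures whose mutual relationships are discussed in Section \ref{sec5}.
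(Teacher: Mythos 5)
There is no proof of this statement in the paper for your attempt to be measured against: it is stated as Conjecture \ref{intro-ext} (identical to Conjecture \ref{c-dlt}) and is used as a \emph{hypothesis} in Theorem \ref{thm110} and Theorem \ref{coj implication}, not proved. Your write-up is correct not to claim a proof, and your diagnosis of where the $L^2$-extension machinery stalls --- the comparison of asymptotic multiplier ideals under restriction to $S$ when $K_X+\Delta$ is nef but not big, so that no ample positivity survives on $S$ and the perturbation $K_X+\Delta+\varepsilon A$ cannot be controlled as $\varepsilon\to 0$ --- accurately matches the state of the art. It is consistent with the only unconditional case recorded in the paper, namely that the conjecture holds when $(X,\Delta)$ is plt, equivalently when $S$ is normal, by \cite[Corollary 1.8]{dhp}.

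Two cautions, though. First, your opening reduction ``to lifting across a single smooth component of $S$, the remaining components being handled inductively'' is not an innocent step: $S=\llcorner\Delta\lrcorner$ is in general reducible and non-normal, and the gap between the known plt case and the open dlt case lives precisely in the interaction of the components of $S$. If one could honestly reduce to extension from one normal component at a time, the conjecture would essentially follow from \cite[Corollary 1.8]{dhp}; so this ``reduction'' conceals most of the difficulty (compatibility of liftings along the intersections of components, i.e.\ along the non-normal locus of $S$) rather than disposing of it by routine induction. Second, your closing suggestion that the nef boundary case requires the full MMP ``together with abundance-type inputs'' must be handled carefully in the context of this paper, where the logical flow is the reverse: Conjecture \ref{intro-ext} together with the non-vanishing conjecture is used to \emph{prove} abundance (Theorem \ref{coj implication}), while the converse implication is Proposition \ref{inj imply ext} --- once $K_X+\Delta$ is semi-ample, the surjectivity follows not from Ohsawa--Takegoshi-type estimates but from an elementary cohomological mechanism: relative Kawamata--Viehweg vanishing plus an injectivity theorem, exploiting that the Iitaka fibration associated to $K_X+\Delta$ maps $S'$ into a proper subvariety of $Z$ because $S\subset\Supp D$. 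So assuming abundance to close your $\varepsilon\to 0$ limit would make the implication circular within the paper's inductive scheme, and the unconditional content of your proposal reduces to the already-known big and plt cases.
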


Note that Conjecture \ref{intro-ext} holds true when 
$K_X+\Delta$ is semi-ample (cf.~Proposition \ref{inj imply ext}). 
It is an easy consequence of a cohomology injectivity theorem. 
We also note that Conjecture \ref{intro-ext} 
is true if $(X, \Delta)$ is purely log terminal (cf.~\cite[Corollary 1.8]{dhp}). 
The following theorem is one of the main results of 
Section \ref{sec5}. It is a generalization of \cite[Theorem 1.4]{dhp}. 

\begin{thm}[{cf.~\cite[Theorem 1.4]{dhp}}]\label{thm110} 
Assume that {\em{Conjecture \ref{intro-nonvani}}} and {\em{Conjecture \ref{intro-ext}}} 
hold true in dimension $\leq n$. 
Let $(X, \Delta)$ be an $n$-dimensional 
projective divisorial log terminal pair such that 
$K_X+\Delta$ is pseudo-effective. 
Then 
$(X, \Delta)$ has a good minimal model. 
In particular, if $K_X+\Delta$ is nef, then $K_X+\Delta$ is 
semi-ample. 
\end{thm}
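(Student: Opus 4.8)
The plan is to establish the existence of a good minimal model by induction on $n$, using the non-vanishing conjecture to produce sections, the extension conjecture to propagate them off a boundary divisor, and the semi-ampleness criterion of Theorem \ref{thm-a} to finish. The problem splits into two parts: (a) constructing a minimal model $(X',\Delta')$ with $K_{X'}+\Delta'$ nef, and (b) showing that this nef divisor is semi-ample, hence that the model is good. The last sentence of the statement is then immediate, since a pair whose log canonical divisor is already nef is its own minimal model, and semi-ampleness is exactly goodness.

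For the reductions and part (a), I would first apply Conjecture \ref{intro-nonvani} to $(X,\Delta)$: as $K_X+\Delta$ is pseudo-effective there is an effective $\mathbb{R}$-divisor $D$ with $K_X+\Delta\sim_{\mathbb{R}}D$. Passing to a $\mathbb{Q}$-factorial dlt model, I may assume $(X,\Delta)$ is $\mathbb{Q}$-factorial dlt. I would then run a $(K_X+\Delta)$-MMP with scaling of an ample divisor, arranging the setup so that $\llcorner\Delta\lrcorner$ contains a divisor $S$ (the purely klt case being reduced to this in the final paragraph). Termination of this program, and hence the existence of the minimal model, would follow from special termination together with the inductive hypothesis that minimal models exist in dimension $\leq n-1$.

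The heart of the argument is part (b): on the minimal model I must show that the nef divisor $K_X+\Delta$ is log abundant, for then Theorem \ref{thm-a} yields semi-ampleness directly. Restricting to $S$ via adjunction produces an $(n-1)$-dimensional dlt pair $(S,\Delta_S)$ with $(K_X+\Delta)|_S=K_S+\Delta_S$; by the inductive validity of both conjectures in dimension $\leq n-1$, the pair $(S,\Delta_S)$ has a good minimal model, so $K_S+\Delta_S$ is abundant, and likewise the restriction of $K_X+\Delta$ to every log canonical center is abundant. Conjecture \ref{intro-ext} then lifts the pluricanonical sections from $S$ to $X$, which forces $\kappa(K_X+\Delta)=\nu(K_X+\Delta)$; combined with the abundance of the restrictions, this shows $K_X+\Delta$ is log abundant, and Theorem \ref{thm-a} completes the proof.

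The main obstacle I expect is precisely this last step: converting the lower-dimensional abundance on $S$ into global abundance on $X$. This is where Conjecture \ref{intro-ext} is indispensable, for without surjectivity of the restriction of sections one cannot guarantee that the pluricanonical forms produced on the boundary survive on the total space. A secondary difficulty is the purely klt case $\llcorner\Delta\lrcorner=0$, where there is no boundary divisor to restrict to; here I would exploit the effective divisor $D$ furnished by non-vanishing to create a log canonical center after a suitable perturbation of the boundary inside $\Supp D$, thereby reducing to the case already treated.
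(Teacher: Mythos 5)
The decisive gap is in your part (b). You assert that surjectivity of the restriction map (Conjecture \ref{intro-ext}) together with abundance of $(K_X+\Delta)|_S$ ``forces $\kappa(X,K_X+\Delta)=\nu(X,K_X+\Delta)$.'' It does not: lifting sections from $S$ yields at most $\kappa(X,K_X+\Delta)\geq \kappa(S,K_S+\Delta_S)$ and gives no control whatsoever on the numerical dimension of $K_X+\Delta$ on $X$; nothing excludes $\nu(X,K_X+\Delta)>\nu(S,(K_X+\Delta)|_S)$, so abundance on all lc centers plus extension does not yield abundance of $K_X+\Delta$ itself --- that implication is essentially the abundance conjecture, which you cannot assume. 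Moreover you never verify the hypotheses of Conjecture \ref{intro-ext} on your minimal model: it requires $K_X+\Delta\sim_{\mathbb Q}D\geq 0$ with $S\subset\Supp D$, and the effective divisor furnished by Conjecture \ref{intro-nonvani} has no reason to contain $\llcorner\Delta\lrcorner$ in its support. The paper's proof (Theorem \ref{coj implication} together with Corollary \ref{general14}) is organized precisely to avoid this step: it first reduces to log canonical and then to kawamata log terminal pairs (via Theorem \ref{main-thm2}, Theorem \ref{thm1}, and Corollary \ref{cor14}), uses non-vanishing and Kawamata's inductive argument on the Iitaka fibration to reduce to the case $\kappa(X,K_X+\Delta)=0$, and only then invokes the extension conjecture --- and in a contradiction argument, not to produce abundance directly. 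One replaces $\Delta$ by an auxiliary boundary $\Delta'$ with $\Supp\llcorner\Delta'\lrcorner=\Supp D$, exactly so that the hypothesis $S\subset\Supp D$ of Conjecture \ref{c-dlt} is satisfied on the minimal model $(Y,\Gamma')$ of $(X,\Delta')$; if $S=\llcorner\Gamma'\lrcorner\neq 0$, then since $\kappa(Y,K_Y+\Gamma')=0$ every section of $m(K_Y+\Gamma')$ vanishes along $S$, so the restriction map is the zero map, contradicting its surjectivity and the nonvanishing of $H^0(S,\mathcal O_S(m(K_Y+\Gamma')))$ coming from induction; hence the model is klt, $K_Y+\Gamma'\sim_{\mathbb Q}0$, and $\kappa_{\sigma}=\kappa=0$. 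Your closing remark about ``creating an lc center inside $\Supp D$'' is the germ of this $\Delta'$-trick, but in your architecture it is confined to the klt case and cannot repair the $\kappa=\nu$ step on which your whole part (b) rests.

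Two secondary but genuine problems. First, $S=\llcorner\Delta\lrcorner$ is in general reducible and non-normal (sdlt, not dlt), so your inductive hypothesis --- stated for projective dlt pairs --- does not apply to $(S,\Delta_S)$ directly; one must prove semi-ampleness on the normalization and descend, which is exactly Theorem \ref{main-thm2} (Proposition \ref{prop3}), resting on the finiteness of log pluricanonical representations. The paper stresses that this reduction is the crucial input making the induction work, and your sketch silently assumes it. Second, termination: ``special termination plus minimal models in dimension $\leq n-1$'' does not suffice as stated; the paper runs the minimal model program with ample scaling and quotes Birkar's theorems, which derive termination of that MMP from the weak non-vanishing conjecture in dimension $\leq n$ --- a substantially stronger mechanism than the one you invoke.
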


By our inductive treatment of Theorem \ref{thm110}, 
Theorem \ref{main-thm2} plays a crucial role. 
Therefore, Theorem \ref{main-thm} is indispensable for Theorem \ref{thm110}. 

We summarize the contents of this paper. 
In Section \ref{prel}, we collects some basic notations and results. 
Section \ref{sec3} is the main part of this paper. 
In this section, we prove Theorem \ref{main-thm}. 
We divide the proof into the three steps:~sub kawamata 
log terminal pairs in 
\ref{kltcase}, log canonical 
pairs with big log canonical divisor in \ref{general 
type case}, and log canonical 
pairs with semi-ample log canonical divisor in \ref{semi-ample 
case}. 
Section \ref{sec1} contains various applications 
of Theorem \ref{main-thm}. 
They are related to the abundance conjecture:~Conjecture \ref{abun}. 
In Subsection \ref{mis}, we generalize the main theorem 
in \cite{fukuda2} (cf.~\cite[Corollary 3]{ckp}), 
the second author's result in \cite{gongyo-weak}, and so on. 
In Section \ref{sec5}, 
we discuss the relationship among the various conjectures 
in the minimal model program.  

\begin{ack}
The first author was partially supported by The Inamori
Foundation and by the Grant-in-Aid for Young Scientists (A) 
$\sharp$20684001 from JSPS. 
He is grateful to Professors Yoichi Miyaoka, 
Shigefumi Mori, Noboru Nakayama for giving him many useful comments 
during the preparation of \cite{fujino-abundance}, which was 
written as his master's thesis under the supervision of Professor Shigefumi 
Mori. This paper is a sequel of \cite{fujino-abundance}. 
The second author was 
partially supported by the Research 
Fellowships of the Japan Society for the Promotion of Science for Young Scientists.
He thanks Professors Caucher Birkar, Paolo Cascini, 
Mihai P\u{a}un, and Hajime Tsuji for comments and discussions. He wishes to thank 
Salvatore Cacciola and Luca Tasin for their question. 
He also thanks Professor Claire Voisin and Institut de Math\'ematiques de Jussieu for 
their hospitality. 
The main idea of this paper was obtained when the both authors stayed at CIRM. 
They are grateful to it for its hospitality. 
\end{ack}

We will work over $\mathbb C$, the complex number field, throughout this paper. 
We will freely use the standard notations in \cite{km}. 

\section {Preliminaries}\label{prel}

In this section, we collects some basic notations and 
results. 

\begin{say}[Convention]Let $D$ be 
a Weil divisor on a normal variety $X$. 
We sometimes 
simply write 
$H^0(X, D)$ to denote $H^0(X, \mathcal O_X(D))$. 
\end{say}

\begin{say}[{$\mathbb Q$-divisors}]
For a $\mathbb Q$-divisor $D=\sum _{j=1}^r d_j D_j$ on a normal 
variety $X$ such that 
$D_j$ is a prime divisor for every $j$ and 
$D_i\ne D_j$ for $i\ne j$, we define the {\em{round-down}} 
$\llcorner D\lrcorner =\sum _{j=1}^r\llcorner d_j \lrcorner D_j$, where 
for every rational number $x$, 
$\llcorner x\lrcorner$ is the integer defined by $x-1<\llcorner x\lrcorner \leq x$. 
We put 
$$
D^{=1}=\sum _{d_j=1}D_j. 
$$
We note that $\sim _{\mathbb Z}$ ($\sim$, for short) 
denotes the {\em{linear equivalence}} 
of divisors. 
We also note 
that $\sim _{\mathbb Q}$ (resp.~$\equiv$) 
denotes the {\em{$\mathbb Q$-linear equivalence}} 
(resp.~{\em{numerical equivalence}}) of $\mathbb Q$-divisors. 
Let $f:X\to Y$ be a morphism and let $D_1$ and $D_2$ be 
$\mathbb Q$-Cartier $\mathbb Q$-divisors on $X$. 
Then $D_1\sim _{\mathbb Q, Y}D_2$ means that 
there is a $\mathbb Q$-Cartier $\mathbb Q$-divisor $B$ on $Y$ such that 
$D_1\sim _{\mathbb Q}D_2+f^*B$. We can also treat 
$\mathbb R$-divisors similarly. 
\end{say}

\begin{say}[Log resolution]
Let $X$ be a normal variety and let $D$ be an 
$\mathbb R$-divisor on $X$. 
A {\em{log resolution}} 
$f:Y\to X$ means that 
\begin{itemize}
\item[(i)] $f $ is a proper birational morphism, 
\item[(ii)] $Y$ is smooth, and 
\item[(iii)] $\Exc (f) \cup \Supp f_*^{-1}D$ is a 
simple normal crossing divisor on $Y$, where 
$\Exc (f)$ is the {\em{exceptional locus}} of $f$. 
\end{itemize}
\end{say}

We recall the notion of singularities of pairs. 

\begin{defn}[Singularities of pairs] 
Let $X$ be a normal variety and let $\Delta$ be an $\mathbb{R}$-divisor 
on $X$ such that $K_X+\Delta$ is $\mathbb{R}$-Cartier. 
Let $\varphi:Y\rightarrow X$ be a log resolution of $(X,\Delta)$. 
We set $$K_Y=\varphi^*(K_X+\Delta)+\sum a_iE_i,$$ where $E_i$ is a 
prime divisor on $Y$ for every $i$.
The pair $(X,\Delta)$ is called 
\begin{itemize}
\item[(a)] \emph{sub kawamata log terminal} $($\emph{subklt}, 
for short$)$ if $a_i > -1$ for all $i$, or
\item[(b)]\emph{sub log canonical} $($\emph{sublc}, for short$)$ if $a_i \geq -1$ for all $i$.
\end{itemize}
If $\Delta$ is effective and $(X, \Delta)$ is subklt (resp.~sublc), 
then we  simply call it \emph{klt} (resp.~\emph{lc}). 

Let $(X, \Delta)$ be an lc pair. 
If there is a log resolution $\varphi:Y\to X$ of $(X, \Delta)$ such that 
$\Exc (\varphi)$ is a divisor and that 
$a_i>-1$ for every $\varphi$-exceptional 
divisor $E_i$, then 
the pair $(X, \Delta)$ is called 
{\em{divisorial log terminal}} ({\em{dlt}}, for short). 
\end{defn}

Let us recall {\em{semi log canonical pairs}} and 
{\em{semi divisorial log terminal pairs}} 
(cf.~\cite[Definition 1.1]{fujino-abundance}). 
For the details of these pairs, see \cite[Section 1]{fujino-abundance}. 
Note that the notion of semi divisorial log terminal pairs 
in \cite[Definition 5.17]{kollar-book} is different from ours. 
 
\begin{defn}[Slc and sdlt] 
Let $X$ be a reduced $S_2$ scheme. 
We assume that it is pure $n$-dimensional 
and normal crossing in codimension one. 
Let $\Delta$ be an effective $\mathbb Q$-divisor 
on $X$ such that $K_X+\Delta$ is $\mathbb Q$-Cartier. 
We assume that $\Delta=\sum _i a_i \Delta_i$ where 
$a_i \in \mathbb Q$ and $\Delta_i$ 
is an irreducible codimension one 
closed subvariety of $X$ such that 
$\mathcal O_{X, \Delta_i}$ is a DVR for every $i$. 
Let $X=\cup _i X_i$ be the irreducible decomposition and 
let $\nu:X^\nu:=\amalg _i X_i^\nu\to X=\cup _i X_i$ be the normalization. 
A $\mathbb Q$-divisor $\Theta$ on $X^\nu$ 
is defined by $K_{X^\nu}+\Theta=\nu^*(K_X+\Delta)$ and 
a $\mathbb Q$-divisor $\Theta_i$ on $X_i^\nu$ by $\Theta_i:=\Theta|_{X_i^\nu}$. 
We say that $(X, \Delta)$ is a {\em{semi log canonical $n$-fold}} 
(an {\em{slc $n$-fold}}, for short) if $(X^\nu, \Theta)$ is lc. 
We say that $(X, \Delta)$ is a {\em{semi divisorial log terminal 
$n$-fold}} (an {\em{sdlt $n$-fold}}, for short) if $X_i$ is normal, 
that is, $X_i^\nu$ is isomorphic to $X_i$, and $(X^{\nu}, \Theta)$ is dlt. 
\end{defn}

We recall a very important 
example of slc pairs. 

\begin{ex}
Let $(X, \Delta)$ be a $\mathbb Q$-factorial lc pair such that 
$\Delta$ is a $\mathbb Q$-divisor. 
We put $S=\llcorner \Delta\lrcorner$. 
Assume that $(X, \Delta-\varepsilon S)$ is klt for some $0<\varepsilon \ll 1$. 
Then $(S, \Delta_S)$ is slc where $K_S+\Delta_S=(K_X+\Delta)|_S$. 
\end{ex}

\begin{rem}
Let $(X, \Delta)$ be a dlt pair such that 
$\Delta$ is a $\mathbb Q$-divisor. 
We put $S=\llcorner \Delta\lrcorner$. 
Then it is well known that $(S, \Delta_S)$ is 
sdlt where $K_S+\Delta_S=(K_X+\Delta)|_S$. 
\end{rem}

The following theorem was originally proved by 
Christopher Hacon (cf.~\cite[Theorem 10.4]{F-fund}, 
\cite[Theorem 3.1]{kk}). 
For a simpler proof, see \cite[Section 4]{fujino-ss}. 

\begin{thm}[Dlt blow-up]\label{dltblowup}
Let $X$ be a normal quasi-projective variety and let 
$\Delta$ be an effective $\mathbb R$-divisor on $X$ such 
that $K_X+\Delta$ is $\mathbb R$-Cartier. Suppose that $(X,\Delta)$ is lc.
Then there exists a projective birational 
morphism $\varphi:Y\to X$ from a normal quasi-projective 
variety $Y$ with the following properties{\em{:}} 
\begin{itemize}
\item[(i)] $Y$ is $\mathbb Q$-factorial, 
\item[(ii)] $a(E, X, \Delta)= -1$ for every  
$\varphi$-exceptional divisor $E$ on $Y$, and
\item[(iii)] for $$
\Gamma=\varphi^{-1}_*\Delta+\sum _{E: {\text{$\varphi$-exceptional}}}E, 
$$ it holds that  $(Y, \Gamma)$ is dlt and $K_Y+\Gamma=\varphi^*(K_X+\Delta)$.
\end{itemize}
\end{thm}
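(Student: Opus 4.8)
The plan is to realize $Y$ as the output of a relative minimal model program that extracts exactly the log canonical places of $(X,\Delta)$. First I would take a log resolution $f\colon W\to X$ of $(X,\Delta)$ and write
$$K_W=f^*(K_X+\Delta)+\sum_i a_i E_i,$$
where the $E_i$ run over all prime divisors on $W$ and $a_i=a(E_i,X,\Delta)$; by the lc hypothesis $a_i\geq -1$ for every $f$-exceptional $E_i$. Setting
$$\Gamma_W=f^{-1}_*\Delta+\sum_{E_i\colon f\text{-exceptional}} E_i,$$
the pair $(W,\Gamma_W)$ is log smooth, hence $\mathbb Q$-factorial dlt, and a direct computation gives
$$K_W+\Gamma_W=f^*(K_X+\Delta)+F,\qquad F=\sum_{E_i\colon f\text{-exceptional}}(a_i+1)\,E_i,$$
where $F\geq 0$ is effective and $f$-exceptional, with $\Supp F$ consisting precisely of the exceptional divisors of discrepancy $>-1$.

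Next I would run a $(K_W+\Gamma_W)$-minimal model program over $X$ with scaling of an ample divisor. Since $K_W+\Gamma_W\sim_{\mathbb R,X}F\geq 0$, this is effectively an $F$-MMP: every contracted extremal ray is $F$-negative, so each divisorial contraction contracts a component of $\Supp F$ and each flip takes place inside $\Supp F$. The program preserves $\mathbb Q$-factoriality and dlt-ness, and because $K_W+\Gamma_W$ is pseudo-effective over $X$ it terminates with a minimal model rather than a Mori fibre space. Thus I obtain a projective birational morphism $\varphi\colon Y\to X$ with $Y$ $\mathbb Q$-factorial and $(Y,\Gamma_Y)$ dlt, where $\Gamma_Y$ is the strict transform of $\Gamma_W$; this already yields (i). Pushing the crepant relation forward gives $K_Y+\Gamma_Y=\varphi^*(K_X+\Delta)+F_Y$, where $F_Y\geq 0$ is the strict transform of $F$, still $\varphi$-exceptional, so $\varphi_*F_Y=0$.

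To finish I would invoke the negativity lemma. On the minimal model $K_Y+\Gamma_Y$ is nef over $X$, while $\varphi^*(K_X+\Delta)$ is $\varphi$-numerically trivial, so $F_Y=(K_Y+\Gamma_Y)-\varphi^*(K_X+\Delta)$ is $\varphi$-nef. Applying the negativity lemma to the $\varphi$-nef divisor $F_Y$ with $\varphi_*F_Y=0$ forces $F_Y\leq 0$, and combined with $F_Y\geq 0$ this gives $F_Y=0$. This is exactly the equality $K_Y+\Gamma_Y=\varphi^*(K_X+\Delta)$ of (iii); moreover $F_Y=0$ means that every component of $\Supp F$ has been contracted, so the surviving $\varphi$-exceptional divisors are precisely those with $a_i=-1$, which is (ii). Since the surviving exceptional divisors are exactly the $\varphi$-exceptional ones, $\Gamma_Y=\varphi^{-1}_*\Delta+\sum_{E\colon \varphi\text{-exceptional}}E$ coincides with the divisor $\Gamma$ of the statement.

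The main obstacle is the existence and termination of this MMP: the pair $(W,\Gamma_W)$ is lc but not klt and we allow $\mathbb R$-coefficients, so one cannot apply BCHM directly. I would deal with this in the standard way, perturbing $\Gamma_W$ by subtracting a small multiple of its reduced part and adding a general ample $\mathbb Q$-divisor to reduce to a klt pair with big boundary over $X$, where the relative MMP with scaling exists and terminates by BCHM and its refinements. The structural feature that makes termination transparent here is that the whole program is driven by the effective $\varphi$-exceptional divisor $F$, so only its finitely many components can ever be contracted.
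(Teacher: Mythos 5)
Your overall architecture --- log resolution $f\colon W\to X$, the crepant pair $(W,\Gamma_W)$ with $K_W+\Gamma_W=f^*(K_X+\Delta)+F$ for $F\geq 0$ exceptional, a relative $(K_W+\Gamma_W)$-MMP with scaling over $X$, and the negativity lemma forcing $F_Y=0$ --- is exactly the standard proof of this theorem. The paper itself gives no argument but refers to Hacon's proof (cf.~\cite[Theorem 10.4]{F-fund}, \cite[Theorem 3.1]{kk}) and the simpler proof in \cite[Section 4]{fujino-ss}, which follow this route. Your main body is correct: every step of such an MMP is $F$-negative, $\mathbb Q$-factoriality and dlt-ness are preserved, and on the relative minimal model $F_Y$ is effective, $\varphi$-exceptional and $\varphi$-nef, hence zero, which yields (ii) and (iii) simultaneously.

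The genuine gap is your final paragraph. Replacing $(W,\Gamma_W)$ by the klt pair $\bigl(W,\ \Gamma_W-\epsilon\llcorner\Gamma_W\lrcorner+\delta A\bigr)$ with $A$ ample and running \emph{its} MMP does not prove the theorem: it is a different MMP. Lowering the coefficients of $\llcorner\Gamma_W\lrcorner$ destroys the $F$-directedness of the program --- the perturbed MMP may contract precisely the exceptional divisors with $a(E,X,\Delta)=-1$ that must survive (if it contracts them all, $\varphi$ can become small and $(Y,\varphi^{-1}_*\Delta)$ need not be dlt, which is the whole point of the theorem) --- and the negativity endgame collapses, since $F-\epsilon\llcorner\Gamma_W\lrcorner+\delta A$ is neither effective nor $\varphi$-exceptional, so one cannot conclude crepancy over $X$. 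The legitimate use of perturbation is ray-by-ray, for \emph{existence} of the steps only: a $(K_W+\Gamma_W)$-negative extremal ray stays negative for a small klt perturbation, and since the relative Picard number of the associated small contraction is one, the klt flip provided by \cite{bchm} is also the flip of the dlt pair. Termination must come from a statement adapted to this situation: here $K_W+\Gamma_W\sim_{\mathbb R,X}F\geq 0$, so the (weak) non-vanishing hypothesis over $X$ is automatically satisfied, and \cite[Theorems 1.4, 1.5]{birkar} --- exactly the results this paper invokes in \ref{27} --- give unconditionally that the MMP with scaling over $X$ terminates with a minimal model; alternatively one can argue via special termination with scaling as in the cited proofs. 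With that substitution your argument agrees with the proofs the paper refers to.
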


The above theorem is very useful for the study of log canonical 
singularities (cf. \cite{F-index}, \cite{F-fund}, 
\cite{fujino-iso}, \cite{gongyo-weak}, 
\cite{gongyo-aban}, \cite{kk}, and 
\cite{fujino-gongyo}). 
We will repeatedly use it in the subsequent sections. 

\begin{say}[Log pluricanonical representations] 
Nakamura--Ueno (\cite{nu}) 
and Deligne proved the following theorem 
(see \cite[Theorem 14.10]{u}). 

\begin{thm}[Finiteness of pluricanonical representations]\label{nakamura-ueno}
Let $X$ be a compact complex Moishezon manifold. 
Then the image of the group homomorphism 
$$\rho_{m}:\Bim(X) \to \Aut_{\mathbb{C}}(H^0(X,mK_X))$$
 is finite, where $\Bim(X)$ is the group of bimeromorphic maps from $X$ to itself.
\end{thm}

For considering the logarithmic version of Theorem \ref{nakamura-ueno}, 
we need the notion of {\em{$B$-birational maps}} 
and {\em{$B$-pluricanonical representations}}.

\begin{defn}[{\cite[Definition 3.1]{fujino-abundance}}]\label{B-bir} 
Let $(X,\Delta)$ (resp.~$(Y,\Gamma)$) be a pair such that $X$ (resp.~$Y$) is 
a normal scheme with a $\mathbb{Q}$-divisor $\Delta$ (resp.~$\Gamma$) 
such that $K_X+\Delta$ (resp.~$K_Y+\Gamma$) is $\mathbb{Q}$-Cartier. 
We say that a proper birational map $f:(X,\Delta)\dashrightarrow (Y,\Gamma)$ 
is {\em {$B$-birational}} if there exists a common resolution 
\begin{equation*}
\xymatrix{ & W\ar[dl]_{\alpha} \ar[dr]^{\beta}\\
 X \ar@{-->}[rr]_{f}  & & Y}
\end{equation*}
such that $$\alpha^*(K_X+\Delta)=\beta^*(K_Y+\Gamma). $$
This means that it holds that $E=F$ when we put $K_W=\alpha^*(K_X +\Delta)+E$ 
and $K_W=\beta^*(K_Y+\Gamma)+F$. 

Let $D$ be a $\mathbb Q$-Cartier $\mathbb Q$-divisor on $Y$. 
Then we define 
$$
f^*D:=\alpha_*\beta^*D. 
$$It is easy to see that $f^*D$ is independent of the common resolution $\alpha:W\to X$ and 
$\beta:W\to Y$. 

Finally, we put 
$$\mathrm{Bir}(X,\Delta)=\{\sigma \,|\, \sigma :(X,\Delta) 
\dashrightarrow (X,\Delta) \text{ is $B$-birational} \}. $$ 
It is obvious that $\Bir (X, \Delta)$ has a natural group structure. 
\end{defn}

\begin{rem}
In Definition \ref{B-bir}, let $\psi:X'\to X$ be a proper birational morphism from a 
normal scheme $X'$ such that $K_{X'}+\Delta'=\psi^*(K_X+\Delta)$. 
Then we can easily check that $\Bir (X, \Delta)\simeq \Bir (X', \Delta')$ by 
$g\mapsto \psi^{-1}\circ g\circ \psi$ for $g\in \Bir (X, \Delta)$. 
\end{rem}

We give a basic example of $B$-birational maps. 

\begin{ex}[Quadratic transformation]\label{ex-q} 
Let $X=\mathbb P^2$ and let $\Delta$ be 
the union of three general lines on $\mathbb P^2$. 
Let $\alpha:W\to X$ be the blow-up at the three intersection points of 
$\Delta$ and let $\beta:W\to X$ be the blow-down of the strict transform 
of $\Delta$ on $W$. 
Then we obtain the {\em{quadratic transformation}} $\varphi$. 
\begin{equation*}
\xymatrix{ & W\ar[dl]_{\alpha} \ar[dr]^{\beta}\\
 X \ar@{-->}[rr]_{\varphi}  & & X}
\end{equation*} 
For the details, see \cite[Chapter V Example 4.2.3]{hartshorne}. 
In this situation, it is easy to see that 
$$
\alpha^*(K_X+\Delta)=K_W+\Theta=\beta^*(K_X+\Delta). 
$$ 
Therefore, $\varphi$ is a $B$-birational map of the pair $(X, \Delta)$. 
\end{ex}

\begin{defn}[{\cite[Definition 3.2]{fujino-abundance}}]\label{B-repre} 
Let $X$ be a pure $n$-dimensional normal scheme and let $\Delta$ be 
a $\mathbb{Q}$-divisor, and let $m$ be a nonnegative integer such 
that $m(K_X+\Delta)$ is Cartier. A $B$-birational map 
$\sigma \in \mathrm{Bir}(X,\Delta)$ defines a linear 
automorphism of $H^0(X,m(K_X+\Delta))$. Thus we get the group homomorphism 
$$\rho_{m}:\mathrm{Bir}(X,\Delta) \to \Aut_{\mathbb{C}}(H^0(X,m(K_X+\Delta))).$$
The homomorphism $\rho_{m}$ is called 
a {\em $B$-pluricanonical representation} or 
{\em{log pluricanonical representation}} for $(X,\Delta)$. 
We sometimes simply denote $\rho _m (g)$ by $g^*$ for 
$g\in \Bir (X, \Delta)$ if there is no 
danger of confusion.
\end{defn}
\end{say} 

In Subsection \ref{kltcase}, we will introduce and 
consider {\em{$\widetilde B$-birational maps}} 
and {\em{$\widetilde B$-pluricanonical 
representations}} for subklt pairs (cf.~Definition \ref{tilde-rep}). 
In some sense, they are generalizations of Definitions \ref{B-bir} and \ref{B-repre}. 
We need them for our proof of Theorem \ref{main-thm}. 

\begin{rem}\label{rem215} 
Let $(X, \Delta)$ be a projective 
dlt pair. 
We note that $g\in \Bir (X, \Delta)$ does not 
necessarily induce a birational 
map $g|_T:T\dashrightarrow T$, where 
$T=\llcorner \Delta\lrcorner$ (see Example \ref{ex-q}). 
However, $g\in \Bir (X, \Delta)$ 
induces an automorphism 
$$
g^*: H^0(T, \mathcal O_T(m(K_T+\Delta_T)))\overset{\sim}\longrightarrow
H^0(T, \mathcal O_T(m(K_T+\Delta_T)))
$$
where $(K_X+\Delta)|_T=K_T+\Delta_T$ and 
$m$ is a nonnegative integer such that 
$m(K_X+\Delta)$ is Cartier (see the proof of \cite[Lemma 4.9]{fujino-abundance}). 
More precisely, let  
\begin{equation*}
\xymatrix{ & W\ar[dl]_{\alpha} \ar[dr]^{\beta}\\
 X \ar@{-->}[rr]_{g}  & & X}
\end{equation*}
be a common log resolution such that 
$$\alpha^*(K_X+\Delta)=K_W+\Theta=\beta^*(K_X+\Delta).$$
Then we can easily see that 
$$
\alpha_*\mathcal O_S\simeq \mathcal O_T\simeq \beta_*\mathcal O_S, 
$$ 
where $S=\Theta^{=1}$, by 
the Kawamata--Viehweg vanishing theorem. 
Thus we obtain an automorphism 
\begin{align*}
g^*: H^0(T, \mathcal O_T(m(K_T+\Delta_T)))&\overset{\beta^*}{\longrightarrow} 
H^0(S, \mathcal O_S(m(K_S+\Theta_S)))\\ &\overset{{\alpha^*}^{-1}}\longrightarrow 
H^0(T, \mathcal O_T(m(K_T+\Delta_T)))
\end{align*}
where $(K_W+\Theta)|_S=K_S+\Theta_S$. 
\end{rem}
 
Let us recall an important lemma on $B$-birational maps, which will be 
used in the proof of the main theorem (cf.~Theorem \ref{semi-ample1}). 

\begin{lem}\label{newlem} 
Let $f:(X, \Delta)\to (X', \Delta')$ be a $B$-birational 
map between projective dlt pairs. 
Let $S$ be an lc center of $(X, \Delta)$ such that 
$K_S+\Delta_S=(K_X+\Delta)|_S$. 
We take a suitable common log resolution as in {\em{Definition \ref{B-bir}}}. 
\begin{equation*}
\xymatrix{ & (W, \Gamma)\ar[dl]_{\alpha} \ar[dr]^{\beta}\\
 (X, \Delta) \ar@{-->}[rr]_{f}  & & (X', \Delta')}
\end{equation*}
Then we can find an lc center 
$V$ of $(X, \Delta)$ contained in $S$ with $K_V+\Delta_V=(K_X+\Delta)|_V$, an 
lc center $T$ of $(W, \Gamma)$ with $K_T+\Gamma_T=(K_X+\Delta)|_T$, and 
an lc center $V'$ of $(X', \Delta')$ with $K_{V'}+\Delta'_{V'} 
=(K_{X'}+\Delta')|_{V'}$ such that 
the following conditions hold. 
\begin{itemize}
\item[(a)] $\alpha|_T$ and $\beta|_T$ are 
$B$-birational morphisms. 
\begin{equation*}
\xymatrix{ & (T, \Gamma_T)\ar[dl]_{\alpha|_T} \ar[dr]^{\beta|_T}\\
 (V, \Delta_V)  & & (V', \Delta'_{V'})}
\end{equation*}
Therefore, $(\beta|_T)\circ (\alpha|_T)^{-1}:(V, \Delta_V)\dashrightarrow 
(V', \Delta'|_{V'})$ is a $B$-birational 
map. 
\item[(b)] $H^0(S, m(K_S+\Delta_S))\simeq 
H^0(V, m(K_V+\Delta_V))$ by the natural restriction map where 
$m$ is a nonnegative integer such that 
$m(K_X+\Delta)$ is Cartier. 
\end{itemize}
\end{lem}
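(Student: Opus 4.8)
The plan is to extract, from the $B$-birational map $f$, a single lc center on which $f$ restricts to an honest $B$-birational morphism, and then to transport the pluricanonical sections along it. The starting point is the common log resolution $\alpha:W\to X$, $\beta:W\to X'$ with $\alpha^*(K_X+\Delta)=K_W+\Gamma=\beta^*(K_{X'}+\Delta')$. Since $S$ is an lc center of $(X,\Delta)$, its preimage data on $W$ picks out components of $\Gamma^{=1}$, and I would first choose an lc center $T$ of $(W,\Gamma)$ lying over $S$ that dominates an lc center on \emph{both} sides: concretely, among the lc centers of $(W,\Gamma)$ contained in $\alpha^{-1}(S)$ I take one that is minimal with respect to the property that $\alpha|_T$ is dominant onto an lc center of $(X,\Delta)$. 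Set $V:=\alpha(T)\subseteq S$ and $V':=\beta(T)$; these are lc centers of $(X,\Delta)$ and $(X',\Delta')$ respectively because images of lc centers under the structure maps of a $B$-birational resolution are again lc centers (the discrepancy identity $E=F$ forces $\alpha$ and $\beta$ to map $\Gamma^{=1}$-strata to strata on each side). By adjunction we equip $V$, $V'$, $T$ with the divisors $\Delta_V$, $\Delta'_{V'}$, $\Gamma_T$ via $(K_X+\Delta)|_V=K_V+\Delta_V$, etc.

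For part (a), the key point is that the fundamental discrepancy equality restricts to $T$. Applying adjunction to $\alpha^*(K_X+\Delta)=K_W+\Gamma=\beta^*(K_{X'}+\Delta')$ along $T$ gives
\begin{equation*}
(\alpha|_T)^*(K_V+\Delta_V)=K_T+\Gamma_T=(\beta|_T)^*(K_{V'}+\Delta'_{V'}),
\end{equation*}
which is exactly the condition that $\alpha|_T$ and $\beta|_T$ are $B$-birational morphisms; composing, $(\beta|_T)\circ(\alpha|_T)^{-1}$ is a $B$-birational map $(V,\Delta_V)\dashrightarrow(V',\Delta'_{V'})$. One must check that $\alpha|_T$ and $\beta|_T$ are genuinely birational (not merely dominant) onto $V$ and $V'$, which is where the minimality in the choice of $T$ is used: minimality rules out $T$ collapsing positive-dimensional fibers onto $V$, forcing generic finiteness, and the discrepancy matching then upgrades this to birationality.

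For part (b), I would produce the isomorphism $H^0(S,m(K_S+\Delta_S))\simeq H^0(V,m(K_V+\Delta_V))$ from the natural restriction map by a Kawamata--Viehweg vanishing argument of exactly the type used in Remark \ref{rem215}. Passing to a dlt model and writing the relevant ideal sequence relating $S$ and its sub-lc-center $V$, the obstruction to surjectivity of restriction lives in an $H^1$ of a twist of $m(K_S+\Delta_S)$ by the ideal of $V$ in $S$; since $m(K_S+\Delta_S)=m(K_X+\Delta)|_S$ and $V$ is cut out as an lc center, this cohomology group vanishes by Kawamata--Viehweg (or the injectivity/vanishing package for lc centers), giving surjectivity, while injectivity of restriction for sections of a pluricanonical sheaf along an lc center is standard. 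The main obstacle I anticipate is not the vanishing itself but the bookkeeping in step one: ensuring a single $T$ can be chosen so that $\alpha|_T$ and $\beta|_T$ are \emph{simultaneously} birational onto lc centers on both sides with $V\subseteq S$, since the two structure maps $\alpha,\beta$ see different stratifications of $W$, and reconciling them is the delicate part of the argument.
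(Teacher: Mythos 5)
There is a genuine gap, in fact two. First, your selection of $T$ does not work. Every lc center of $(W,\Gamma)$ automatically maps onto an lc center of $(X,\Delta)$, so ``minimal among strata in $\alpha^{-1}(S)$ dominating an lc center'' has no content: the minimal such strata are points, and for a point $T$ condition (a) is trivially satisfied while condition (b) becomes $H^0(S, m(K_S+\Delta_S))\simeq \mathbb C$, which is absurd in general. The actual construction (Claims $(A_n)$ and $(B_n)$ in the proof of \cite[Lemma 4.9]{fujino-abundance}, to which the paper defers) is an alternating descent: start with a stratum $T_0$ of $\Gamma^{=1}$ with $\alpha|_{T_0}:T_0\to S$ birational (this exists by dlt-ness of $(X,\Delta)$, and already here ``generically finite'' does not upgrade to ``birational'' for free --- a multisection of a contracted stratum can have degree $>1$, so the dlt structure on the target must be used); if $\beta|_{T_0}$ drops dimension onto $S'_1=\beta(T_0)$, pass to a smaller stratum $T_1\subset T_0$ with $\beta|_{T_1}:T_1\to S'_1$ birational, then look at $\alpha(T_1)\subset S$, and repeat, alternating sides. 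The chain of dimensions strictly decreases, so it terminates at a $T$ for which $\alpha|_T$ and $\beta|_T$ are \emph{simultaneously} birational onto lc centers, and the whole argument is run by induction on dimension, with (a) and (b) proved together. Your proposal contains no mechanism that couples the choice of $V$ to the isomorphism in (b); that coupling is the entire point.

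Second, your argument for (b) is unfounded and the claimed ingredients are false in this generality. The lemma assumes no positivity of $K_X+\Delta$ whatsoever, so there is no Kawamata--Viehweg vanishing available for $H^1(S,\mathcal I_{V}\otimes \mathcal O_S(m(K_S+\Delta_S)))$: the relevant twist differs from a log canonical divisor by $(m-1)(K_S+\Delta_S)$, which need not be nef and big. Likewise, injectivity of restriction of pluricanonical sections to an lc center is \emph{not} standard and is generally false: for a strictly smaller lc center $V\subset S$ of an arbitrary dlt pair, $H^0(S,m(K_S+\Delta_S))\to H^0(V,\cdot)$ typically has a large kernel. The true mechanism for (b) is the descent chain itself: pulling a section on $S$ back to $T_0$ via the birational $\alpha|_{T_0}$, one has $K_{T_0}+\Gamma_{T_0}=(\beta|_{T_0})^*\bigl((K_{X'}+\Delta')|_{S'_1}\bigr)$, so (using $(\beta|_{T_0})_*\mathcal O_{T_0}\simeq \mathcal O_{S'_1}$, which is where the vanishing-theorem input of the type in Remark \ref{rem215} actually enters) every section is a pullback from the lower-dimensional $S'_1$ and is therefore determined by, and in bijection with, its restriction to $T_1$ and hence to the next lc center in the chain; composing these identifications along the chain gives the isomorphism $H^0(S, m(K_S+\Delta_S))\simeq H^0(V, m(K_V+\Delta_V))$. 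In short: injectivity in (b) comes from sections being vertically constant along the contractions produced by the construction, not from any cohomology vanishing on $S$.
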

\begin{proof}
See Claim ($A_n$) and Claim ($B_n$) in the proof of 
\cite[Lemma 4.9]{fujino-abundance}. 
\end{proof}

\begin{say}[Numerical dimesions]
In Section \ref{sec5}, we will use the notion of {\em{Nakayama's numerical 
Kodaira dimension}} for pseudo-effective 
$\mathbb R$-Cartier $\mathbb R$-divisors on normal projective 
varieties. For the details, see \cite{N} and \cite{brian}. 

\begin{defn}[{Nakayama's numerical Kodaira dimension 
(cf.~\cite[V.~2.5.~Definition]{N}}]
Let $D$ be a pseudo-effective $\mathbb R$-Cartier $\mathbb R$-divisor 
on a normal projective variety $X$ and let $A$ be a Carteir divisor on $X$. 
If $H^0(X, \mathcal O_X(\llcorner mD\lrcorner +A))\ne 0$ for infinitely 
many positive integers $m$, then we put 
$$
\sigma (D; A)=\max \left\{ k\in \mathbb Z_{\geq 0}\, \left|\, {\underset{m\to \infty}{\limsup}}
\frac{\dim H^0(X, \mathcal O_X(\llcorner mD\lrcorner +A))}{m^k}>0 \right.\right\}. 
$$
If $H^0(X, \mathcal O_X(\llcorner mD\lrcorner +A))\ne 0$ only for 
finitely many $m\in \mathbb Z_{\geq 0}$, then 
we put $\sigma (D; A)=-\infty$. 
We define {\em{Nakayama's numerical Kodaira dimension $\kappa _{\sigma}$}} 
by
$$
\kappa _\sigma(X, D)=\max \{\sigma(D; A)\, |\, A \ {\text{is a  Cartier divisor on}} \ X\}. 
$$
If $D$ is a nef $\mathbb R$-Cartier $\mathbb R$-divisor on a normal projective 
variety $X$, then it is well known that $D$ is pseudo-effective and 
$$\kappa _\sigma (X, D)=\nu (X, D)$$
where $\nu(X, D)$ is the {\em{numerical Kodaira dimension}} of $D$. 
\end{defn}
\end{say}

We close this section with a remark on the minimal model 
program with scaling. 
For the details, see \cite{bchm} and \cite{birkar}. 

\begin{say}[Minimal model program with ample 
scaling]\label{27} 
Let $f:X\to Z$ be a projective 
morphism between quasi-projective varieties and 
let $(X, B)$ be a $\mathbb Q$-factorial dlt pair. 
Let $H$ be an effective $f$-ample $\mathbb Q$-divisor on $X$ such 
that $(X, B+H)$ is lc and that $K_X+B+H$ is $f$-nef. 
Under these assumptions, we can run the 
minimal model program on $K_X+B$ with scaling of $H$ over $Z$. 
We call it {\em{the minimal model program with 
ample scaling.}}

Assume that $K_X+B$ is not pseudo-effective 
over $Z$.
We note that the above minimal model program 
always terminates at a Mori fiber space structure over 
$Z$. By this 
observation, the results in \cite[Section 2]{fujino-abundance} 
hold in every dimension. 
Therefore, we will freely use the results in 
\cite[Section 2]{fujino-abundance} for {\em{any}} dimensional 
varieties. 

From now on, we assume that $K_X+B$ is pseudo-effective 
and $\dim X=n$. We further assume that 
the weak non-vanishing conjecture (cf.~Conjecture \ref{nonvan3}) 
for projective $\mathbb Q$-factorial dlt pairs holds in dimension $\leq n$. Then 
the minimal model program on $K_X+B$ with 
scaling of $H$ over $Z$ terminates with a minimal model of $(X, B)$ 
over $Z$ by \cite[Theorems 1.4, 
1.5]{birkar}. 
\end{say}

\section{Finiteness of log pluricanonical representations}\label{sec3} 
In this section, we give a proof of Theorem \ref{main-thm}. 
All the divisors in this section are $\mathbb Q$-divisors. 
We do not use $\mathbb R$-divisors throughout this section. 
We divide the proof into the three steps:~subklt pairs in \ref{kltcase}, 
lc pairs with big log canonical divisor in \ref{general type case}, 
and lc pairs with semi-ample log canonical divisor in \ref{semi-ample case}.  

\subsection{Klt pairs}\label{kltcase}

In this subsection, we prove Theorem \ref{main-thm} for 
klt pairs. More precisely, we prove Theorem \ref{main-thm} for 
{\em{$\widetilde B$-pluricanonical representations}} 
for projective subklt pairs without assuming the semi-ampleness of 
log canonical divisors. This formulation is indispensable 
for the proof of Theorem \ref{main-thm} for 
lc pairs. 

First, let us introduce the notion of 
{\em{$\widetilde B$-pluricanonical representations for subklt pairs}}. 

\begin{defn}[$\widetilde B$-pluricanonical 
representations for subklt pairs]\label{tilde-rep}
Let $(X, \Delta)$ be an $n$-dimensional projective subklt pair such that 
$X$ is smooth and that $\Delta$ has a simple normal crossing support. 
We write $\Delta=\Delta^+-\Delta^-$ where 
$\Delta^+$ and $\Delta^-$ are effective and have no common irreducible components. 
Let $m$ be a positive integer such that $m(K_X+\Delta)$ is Cartier. 
In this subsection, we always see 
$$
\omega\in H^0(X, m(K_X+\Delta))
$$ 
as a meromorphic $m$-ple $n$-form on $X$ which vanishes along 
$m\Delta^-$ and has poles at most $m\Delta^+$. 
By $\Bir (X)$, we mean the group of all the 
birational mappings of $X$ onto itself. 
It has a natural group structure induced by the composition of birational maps. 
We define 
$$
\widetilde \Bir _m(X, \Delta)=\left\{g \in \Bir (X)\, \left|\, 
\begin{array}{l} g^*\omega\in H^0(X, m(K_X+\Delta)) \ \text{for} 
\\{\text{every}}\ 
\omega\in H^0(X, m(K_X+\Delta))\end{array}\right. \right\}. 
$$
Then it is easy to see that 
$\widetilde \Bir _m (X, \Delta)$ is a subgroup of $\Bir (X)$. 
An element $g\in \widetilde \Bir _m (X, \Delta)$ is called 
a {\em{$\widetilde B$-birational map}} 
of $(X, \Delta)$. By the definition of $\widetilde \Bir _m (X, \Delta)$, 
we get the group homomorphism 
$$
\widetilde \rho_m:\widetilde \Bir _m (X, \Delta) 
\to \Aut _{\mathbb C}(H^0(X, m(K_X+\Delta))). 
$$ 
The homomorphism $\widetilde \rho_m$ is called 
the {\em{$\widetilde{B}$-pluricanonical representation}} of 
$\widetilde \Bir _m(X, \Delta)$. 
We sometimes simply denote $\widetilde \rho _m (g)$ by $g^*$ 
for $g\in \widetilde \Bir _m (X, \Delta)$ if there is no danger of confusion. 
There exists a natural inclusion $\Bir (X, \Delta)\subset \widetilde \Bir _m(X, \Delta)$ by 
the definitions. 
\end{defn}

Next, let  us recall the notion of {\em{$L^{2/m}$-integrable $m$-ple $n$-forms}}. 

\begin{defn}\label{integrale-def} 
Let $X$ be an $n$-dimensional connected complex manifold and  
let $\omega$ be a meromorphic $m$-ple $n$-form. Let $\{U_{\alpha}\}$ 
be an open covering of $X$ with holomorphic coordinates 
$$(z_{\alpha}^1,z_{\alpha}^2, \cdots, z_{\alpha}^n).$$
We can write 
\begin{equation*}\omega|_{U_{\alpha}}
=\varphi_{\alpha}(dz_{\alpha}^1\wedge \cdots \wedge dz_{\alpha}^n)^m,
\end{equation*}
where $\varphi_{\alpha}$ is a meromorphic function on $U_{\alpha}$.
We give $(\omega \wedge \bar{\omega})^{1/m}$ by 
\begin{equation*}(\omega \wedge \bar{\omega})^{1/m}|_{U_{\alpha}}
=\left( \frac{\sqrt{-1}}{2\pi} \right)^n |\varphi_{\alpha}|^{2/m}dz_{\alpha}^1
\wedge d\bar{z}_{\alpha}^1 \cdots \wedge dz_{\alpha}^n \wedge d\bar{z}_{\alpha}^n.
\end{equation*}
We say that a meromorphic $m$-ple $n$-form 
$\omega$ is {\em $L^{2/m}$-integrable} 
if $$\int_{X} (\omega \wedge \bar{\omega})^{1/m} < \infty. 
$$
\end{defn}

We can easily check the following two lemmas. 
 
\begin{lem}\label{hol} Let $X$ be a compact connected complex 
manifold and let $D$ be a reduced normal crossing 
divisor on $X$. Set $U =X \setminus D$. If $\omega$ is 
an $L^2$-integrable meromorphic $n$-form such that $\omega|_{U}$ is holomorphic, 
then $\omega$ is a holomorphic $n$-form.
\end{lem}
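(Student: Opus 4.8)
The plan is to reduce the statement to the
classical fact that an $L^2$ holomorphic $n$-form on the complement of a
divisor extends holomorphically across that divisor, via the Riemann
extension theorem controlled by the integrability hypothesis. First I
would work locally near a point of $D$, choosing coordinates
$(z^1,\dots,z^n)$ on a polydisc $U_\alpha$ so that $D$ is given by
$z^1\cdots z^k=0$ (using that $D$ is normal crossing). On $U_\alpha$ the
form is written $\omega=\varphi_\alpha\, dz^1\wedge\cdots\wedge dz^n$ with
$\varphi_\alpha$ meromorphic and, by hypothesis, holomorphic on the
complement of $D$. Thus $\varphi_\alpha$ is a holomorphic function on
$U_\alpha\setminus\{z^1\cdots z^k=0\}$ whose only possible poles lie along
the coordinate hyperplanes, and the task is purely local: show that the
$L^2$ condition forces $\varphi_\alpha$ to be holomorphic across these
hyperplanes.

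Next I would translate the global $L^2$-integrability into a local
finiteness statement. Since $X$ is compact and the integrand
$(\omega\wedge\bar\omega)^{1/m}$ with $m=1$ is
$(\sqrt{-1}/2\pi)^n|\varphi_\alpha|^2\, dz^1\wedge d\bar z^1\wedge\cdots
\wedge dz^n\wedge d\bar z^n$, the hypothesis
$\int_X\omega\wedge\bar\omega<\infty$ gives in particular that
$\int_{U_\alpha}|\varphi_\alpha|^2\,dV<\infty$ on each chart, i.e.\ locally
$\varphi_\alpha\in L^2(U_\alpha)$ with respect to Lebesgue measure. So the
problem is reduced to the following one-variable-at-a-time assertion: a
holomorphic function on the punctured polydisc that is meromorphic and
square-integrable must be holomorphic.

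The key step is then the removable-singularity argument. I would argue one
coordinate at a time: fixing the other variables, $\varphi_\alpha$ is a
one-variable meromorphic function of $z^j$ on a punctured disc, and if it
had a genuine pole of order $p\geq 1$ along $z^j=0$ then it would grow like
$|z^j|^{-p}$, whence $|\varphi_\alpha|^2$ grows like $|z^j|^{-2p}$, which is
not locally integrable against the area form $dz^j\wedge d\bar z^j$ in two
real dimensions once $p\geq 1$ (the integral of $|z|^{-2p}$ diverges near
$0$). This contradicts the $L^2$ bound, so $\varphi_\alpha$ has no pole
along any component of $D$; being meromorphic with no poles, it is
holomorphic. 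Doing this for each of the $k$ branches and invoking
Riemann's extension theorem to remove the resulting codimension-one
indeterminacy shows $\varphi_\alpha$ is holomorphic on all of $U_\alpha$.

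I expect the main obstacle to be bookkeeping the multivariable integrability
correctly rather than any deep idea: one must make sure that integrability
of $|\varphi_\alpha|^2$ over the full polydisc really does control the
behavior along each individual hyperplane $z^j=0$, which requires a Fubini
argument showing that for almost every choice of the transverse variables
the restricted one-variable function is $L^2$, and then upgrading this
almost-everywhere statement to a genuine pole-order bound using that
$\varphi_\alpha$ is meromorphic (so its pole order is constant along each
component). Once that measure-theoretic reduction is in place, the
one-dimensional divergence computation finishes the proof, and the local
holomorphicity patches to the global conclusion that $\omega$ is a
holomorphic $n$-form.
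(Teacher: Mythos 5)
Your argument is correct, but note that the paper itself does not prove this lemma: it simply cites the literature (Sakai, ``Kodaira dimensions of complements of divisors,'' Theorem 2.1, and Kawamata, ``Characterization of abelian varieties,'' Proposition 16). What you have written out is essentially the standard local computation underlying those references, so your route is a self-contained elementary substitute for the citation. Two small simplifications are worth recording. First, your Fubini/almost-every-slice worry can be bypassed: if $\varphi_\alpha$ had polar divisor containing $\{z^j=0\}$ with multiplicity $p\geq 1$, pick a smooth point $q$ of $\{z^j=0\}$ lying on no other branch of $D$ and off the zero divisor of the numerator; on a small polydisc around $q$ one has $|\varphi_\alpha|\geq c\,|z^j|^{-p}$ with $c>0$, and then Tonelli applied to the nonnegative integrand gives
$$
\int |\varphi_\alpha|^2\,dV \;\geq\; c^2\int_{|z^j|<\varepsilon}|z^j|^{-2p}\,dA\cdot(\text{positive factor}) \;=\;\infty
$$
directly, since $\int_0^\varepsilon r^{1-2p}\,dr=\infty$ for $p\geq 1$; no almost-everywhere slicing or upgrading is needed. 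Second, your final appeal to ``Riemann's extension theorem to remove codimension-one indeterminacy'' is slightly misstated: the indeterminacy locus of a meromorphic function has codimension $\geq 2$, and the relevant fact is simply that the polar set of a meromorphic function is either empty or purely of codimension one, so once no branch of $D$ carries a pole, $\varphi_\alpha$ is already holomorphic. Also, compactness of $X$ plays no role in your local estimate (finiteness of the global integral bounds each chart regardless); it is merely part of the ambient setting. With these adjustments your proof is complete and matches what the cited sources prove.
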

\begin{proof}See, for example,  \cite[Theorem 2.1]{s} or \cite[Proposition 16]{k3}.
\end{proof}

\begin{lem}[{cf.~\cite[Lemma 4.8]{gongyo-aban}}]\label{integrable} 
Let $(X,\Delta)$ be a projective subklt pair such that $X$ is 
smooth and $\Delta$ has a simple normal crossing support. 
Let $m$ be a positive integer 
such that $m\Delta$ is Cartier 
and let $\omega \in H^0(X, \mathcal{O}_{X}(m(K_X+\Delta)))$ be a 
meromorphic $m$-ple $n$-form. Then $\omega$ is $L^{2/m}$-integrable. 
\end{lem}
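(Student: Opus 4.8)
The goal is to show that any section $\omega \in H^0(X, \mathcal{O}_X(m(K_X+\Delta)))$, viewed as a meromorphic $m$-ple $n$-form, is $L^{2/m}$-integrable. The plan is to reduce the global integrability to a local computation near the support of $\Delta$, where the only possible failure can occur, and then to exploit the subklt hypothesis, which forces the coefficients of $\Delta$ to be strictly less than $1$.

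First I would fix a point $p \in X$ and choose local coordinates $(z^1, \dots, z^n)$ adapted to the simple normal crossing support of $\Delta$, so that near $p$ we may write $\Delta = \sum_i a_i \{z^i = 0\}$ with each $a_i < 1$ by the subklt condition (here the $a_i$ may be negative, corresponding to the $\Delta^-$ part). Writing $\omega = \varphi (dz^1 \wedge \cdots \wedge dz^n)^m$ locally, the condition $\omega \in H^0(X, \mathcal{O}_X(m(K_X+\Delta)))$ means that $\operatorname{div}(\varphi) + m\Delta \geq 0$ as divisors, so the meromorphic function $\varphi$ has poles bounded by $m \sum_i a_i \{z^i = 0\}$; concretely $|\varphi| \lesssim \prod_i |z^i|^{-m a_i}$ up to a bounded holomorphic factor. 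The integrand $(\omega \wedge \bar\omega)^{1/m}$ then behaves, up to a positive constant, like $|\varphi|^{2/m}$ times Euclidean volume, i.e. like $\prod_i |z^i|^{-2 a_i}$.

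The key step is the local integrability estimate: the integral $\int_{\text{polydisc}} \prod_i |z^i|^{-2 a_i}\, dV$ factors as a product of one-variable integrals, and each factor $\int_{|z^i| < 1} |z^i|^{-2a_i}\, d\lambda(z^i)$ converges precisely when $2 a_i < 2$, that is, when $a_i < 1$. This is exactly the subklt hypothesis, so each local integral is finite. Since $X$ is compact, I would cover it by finitely many such coordinate polydiscs subordinate to a partition of unity; the integrand is continuous and bounded away from $\Supp \Delta$, so finiteness on each chart yields finiteness of $\int_X (\omega \wedge \bar\omega)^{1/m}$ globally.

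The main obstacle, which is really just bookkeeping rather than a deep difficulty, is handling the two-sided nature of $\Delta = \Delta^+ - \Delta^-$ correctly: the negative part $\Delta^-$ forces $\omega$ to vanish along those divisors and only helps integrability, whereas it is the positive part $\Delta^+$, with coefficients strictly below $1$, that could a priori cause a pole, and one must check that the strict inequality $a_i < 1$ (not merely $a_i \leq 1$) is what guarantees convergence of the boundary integral. I would emphasize that the sublc case $a_i = 1$ would give a divergent logarithmic integral, underscoring why the subklt assumption is essential here. Everything else is a routine reduction to the one-dimensional model integral and a compactness argument, so I expect no genuine analytic difficulty beyond this local estimate.
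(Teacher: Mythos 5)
Your proof is correct and follows essentially the same route as the paper's: the paper states this lemma as an easy check with a reference to \cite[Lemma 4.8]{gongyo-aban}, whose argument is precisely your local computation in SNC-adapted coordinates, bounding $|\varphi|^{2/m}$ by $\prod_i |z^i|^{-2a_i}$ and reducing to the one-variable integral $\int_{|z|<1}|z|^{-2a}\,d\lambda$, convergent exactly when $a<1$, i.e.\ under the subklt hypothesis. Your handling of $\Delta=\Delta^+-\Delta^-$ and the compactness/partition-of-unity reduction matches the standard argument, so there is nothing to add.
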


By Lemma \ref{integrable}, we obtain the following 
result. 
We note that the proof of \cite[Proposition 4.9]{gongyo-aban} works without any changes 
in our setting. 
 
\begin{prop}\label{algebraic} 
Let $(X,\Delta)$ be an $n$-dimensional projective subklt pair such 
that $X$ is smooth, connected, and $\Delta$ has a simple normal 
crossing support. Let $g \in \widetilde \Bir_m(X,\Delta)$ be a 
$\widetilde B$-birational map where $m$ is a positive integer 
such that $m\Delta$ is Cartier, 
and let $$\omega \in H^0(X, m(K_X+\Delta))$$ be a nonzero meromorphic 
$m$-ple $n$-form on $X$. 
Suppose that $g^*\omega= \lambda \omega$ for some 
$\lambda \in \mathbb{C}$. Then there exists a positive integer 
$N_{m,\omega}$ such that $\lambda^{N_{m,\omega}}=1$ and 
$N_{m,\omega}$ does not depend on $g$. 
\end{prop}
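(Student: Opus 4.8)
The plan is to show that $\lambda$ is a root of unity whose order is bounded in terms of the fixed data $(X,\Delta,\omega,m)$ alone, in three stages: first $|\lambda|=1$, then that $\lambda$ is algebraic with all Galois conjugates on the unit circle, and finally that it is a root of unity of uniformly bounded order.

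For $|\lambda|=1$ I would use the $L^{2/m}$-integral. By Lemma~\ref{integrable} the quantity $I(\eta):=\int_X(\eta\wedge\bar\eta)^{1/m}$ is finite for every $\eta\in V:=H^0(X,m(K_X+\Delta))$, and it is strictly positive when $\eta\neq 0$. The essential point is that $I$ is invariant under $\widetilde B$-birational maps: since $g$ is a birational self-map of $X$ and $(\eta\wedge\bar\eta)^{1/m}$ is a positive measure, the change-of-variables formula on the open locus where $g$ is an isomorphism (whose complement is a null set) gives $I(g^*\eta)=I(\eta)$. Taking $\eta=\omega$ and using $g^*\omega=\lambda\omega$ together with $I(\lambda\omega)=|\lambda|^{2/m}I(\omega)$ and $0<I(\omega)<\infty$ forces $|\lambda|^{2/m}=1$. (Structurally, $I$ is continuous, positive off $0$, proper, satisfies $I(c\eta)=|c|^{2/m}I(\eta)$, and is subadditive because $|a+b|^{2/m}\le|a|^{2/m}+|b|^{2/m}$ for $m\geq 1$; hence the linear maps preserving $I$ form a \emph{compact} subgroup of $\mathrm{GL}(V)$ containing the image of $\widetilde\rho_m$, which already confines every eigenvalue to the unit circle.)

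To upgrade this to an algebraicity statement I fix a subfield $k\subset\mathbb C$, finitely generated over $\mathbb Q$, over which $X$, $\Delta$, $\omega$ and $g$ are all defined, so that $\lambda\in k$. For any $\sigma\in\Aut(\mathbb C)$ I apply $\sigma$ to the entire configuration: $(X^\sigma,\Delta^\sigma)$ is again a smooth projective subklt pair with simple normal crossing support (these being algebro-geometric conditions preserved by $\sigma$), $g^\sigma\in\widetilde\Bir_m(X^\sigma,\Delta^\sigma)$, and $(g^\sigma)^*\omega^\sigma=\sigma(\lambda)\,\omega^\sigma$ with $\omega^\sigma\neq 0$. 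The first stage, applied to this conjugate configuration, yields $|\sigma(\lambda)|=1$. If $\lambda$ were transcendental there would be a $\sigma$ making $|\sigma(\lambda)|$ arbitrarily large, a contradiction; hence $\lambda\in\overline{\mathbb Q}$, and, letting $\sigma$ run over all embeddings of $\mathbb Q(\lambda)$, every Galois conjugate of $\lambda$ lies on the unit circle.

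Finally, to conclude that $\lambda$ is a root of unity of order independent of $g$, I would invoke Kronecker's theorem, for which two things remain: that $\lambda$ is an algebraic \emph{integer}, and that $[\mathbb Q(\lambda):\mathbb Q]$ is bounded independently of $g$ (so that, the minimal polynomials having bounded degree and all roots on the unit circle, only finitely many such $\lambda$ occur, and one may take $N_{m,\omega}$ to be the least common multiple of their orders). I would attack both by spreading the \emph{fixed} datum $(X,\Delta,\omega)$ out to a model over a finitely generated $\mathbb Z$-algebra on which $H^0$ becomes a free module of the fixed rank $d=\dim_{\mathbb C}V$ and on which each $\widetilde B$-birational map and its inverse act integrally, so that $\rho_m(g)$ has a monic characteristic polynomial over that ring and $\lambda$ is integral of degree at most $d$ over it; the unit-circle condition on all conjugates, together with good-reduction (denominator) control at the finite places, would then promote $\lambda$ to a genuine algebraic integer. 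This last step is the main obstacle. The delicacy is entirely one of uniformity: the field of definition of $g$ grows with $g$, whereas $N_{m,\omega}$ must not, so the crux is to extract the integrality and the degree bound from the fixed model attached to $(X,\Delta,\omega)$ and the fixed dimension $d$ alone, rather than from data depending on the individual map $g$.
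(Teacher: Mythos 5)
Your first two stages are sound, and they do coincide with pieces of the classical argument behind this proposition: the invariance of $\int_X(\omega\wedge\bar\omega)^{1/m}$ under $\widetilde B$-birational maps gives $|\lambda|=1$, and conjugating the whole configuration by $\sigma\in\Aut(\mathbb C)$ gives $|\sigma(\lambda)|=1$ for every $\sigma$, whence $\lambda\in\overline{\mathbb Q}$ with all conjugates on the unit circle. But the proposal stalls exactly where you say it does, and the obstacle is essential, not technical. Kronecker's theorem requires $\lambda$ to be an algebraic \emph{integer} (e.g.\ $(3+4i)/5$ has all conjugates on the unit circle and is not a root of unity), and the statement being proved moreover requires a bound on $[\mathbb Q(\lambda):\mathbb Q]$ uniform in $g$: otherwise, even knowing each $\lambda$ is a root of unity, its order could grow with $g$ and no $N_{m,\omega}$ independent of $g$ would exist. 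Your spreading-out strategy cannot deliver either ingredient, because the finitely generated model you build depends on $g$ (whose field of definition grows with $g$), and there is no reason for $\rho_m(g)$, or $\widetilde\rho_m(g)$, to act integrally on any lattice attached to the fixed datum $(X,\Delta,\omega,m)$; an eigenvalue of a matrix over a big finitely generated ring is not thereby an algebraic integer, nor of bounded degree over $\mathbb Q$.

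The missing idea --- and this is how the paper proves the proposition, by deferring to the proof of \cite[Proposition 4.9]{gongyo-aban}, which follows Nakamura--Ueno (cf.\ \cite[Theorem 14.10]{u}) --- is the cyclic covering trick, which manufactures precisely the $g$-independent integral structure you were trying to build arithmetically. Writing $\omega|_{U_\alpha}=\varphi_\alpha(dz_\alpha^1\wedge\cdots\wedge dz_\alpha^n)^m$, one forms the degree-$m$ cover of $X$ defined locally by $\xi^m=\varphi_\alpha$ (the same construction that reappears in families in the proof of Proposition \ref{algebraic-2}) and takes a suitable resolution $Y'$. On $Y'$ the pullback of $\omega$ has an $m$-th root $\eta$, a meromorphic $n$-form which is $L^2$-integrable because $\omega$ is $L^{2/m}$-integrable (Lemma \ref{integrable} --- this is exactly where the subklt hypothesis enters), hence holomorphic by Lemma \ref{hol}. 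Any $g\in\widetilde\Bir_m(X,\Delta)$ with $g^*\omega=\lambda\omega$ lifts to a birational self-map $g'$ of $Y'$ with $(g')^*\eta=\mu\eta$, where $\mu^m$ equals $\lambda$ up to an $m$-th root of unity. Taking a resolution $W$ of the graph of $g'$ with projections $p,q\colon W\to Y'$, the operator $p_*q^*$ preserves $H^n(Y',\mathbb Z)$ modulo torsion and restricts to $(g')^*$ on $H^0(Y',K_{Y'})\subset H^n(Y',\mathbb C)$; hence $\mu$ is a root of a monic integral polynomial of degree $b_n(Y')$, and $Y'$ depends only on $(X,\Delta,\omega,m)$, never on $g$. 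Your stages 1--2, run on $Y'$, show all conjugates of $\mu$ lie on the unit circle; Kronecker then makes $\mu$ a root of unity with $\varphi(\operatorname{ord}\mu)\le b_n(Y')$, where $\varphi$ is the Euler function, and $N_{m,\omega}$ independent of $g$ follows --- this is exactly the bound recorded in Remark \ref{rem_betti}. In short: your analytic and Galois-theoretic steps are correct and parallel the paper's, but the integrality-plus-uniform-degree step has to come from the topology of a fixed cover extracted from $\omega$, not from arithmetic models of the varying map $g$.
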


\begin{rem}\label{rem_betti}By the proof 
of \cite[Proposition 4.9]{gongyo-aban} and \cite[Theorem 14.10]{u}, 
we know that $\varphi(N_{m, \omega}) \leq b_n (Y')$, where 
$b_n(Y')$ is the $n$-th Betti number of $Y'$ which 
is in the proof of \cite[Proposition 4.9]{gongyo-aban} and 
$\varphi$ is the Euler function.  
\end{rem}

\begin{prop}[cf. {\cite[Proposition 14.7]{u}}]\label{semi-simpleness} 
Let $(X,\Delta)$ be a projective 
subklt pair such that $X$ is 
smooth, connected, and $\Delta$ has a simple normal crossing support, and let
$$\widetilde {\rho}_{m}:\widetilde \Bir _m(X, \Delta) \to 
\Aut_{\mathbb{C}}(H^0(X,m(K_X+\Delta)))$$ 
be the $\widetilde B$-pluricanonical representation of $\widetilde 
\Bir_m (X, \Delta)$ where $m$ is a positive 
integer such that $m\Delta$ is Cartier. Then $\widetilde {\rho}_{m}(g)$ is 
semi-simple for every $g\in \widetilde \Bir_m (X, \Delta)$. 
\end{prop}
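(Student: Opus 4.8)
The plan is to manufacture a $\widetilde \Bir_m(X,\Delta)$-invariant functional on the finite-dimensional space $V:=H^0(X,m(K_X+\Delta))$ and to use its growth behavior to rule out Jordan blocks. For $\omega\in V$, set
$$
I(\omega)=\int_X(\omega\wedge\bar{\omega})^{1/m},
$$
which is finite for every $\omega$ by Lemma \ref{integrable}. First I would verify that $I$ is invariant under the representation, that is, $I(\widetilde\rho_m(g)\omega)=I(\omega)$ for all $g\in\widetilde\Bir_m(X,\Delta)$ and all $\omega\in V$. This is a change-of-variables computation: on the dense open set where the birational self-map $g$ is an isomorphism, $\widetilde\rho_m(g)\omega=g^*\omega$ is the honest analytic pullback, $(\omega\wedge\bar{\omega})^{1/m}$ is a genuine nonnegative measure whose total integral is preserved by a biholomorphism, and the indeterminacy and exceptional loci are null; since both integrals are finite by Lemma \ref{integrable}, they agree.

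Next I would fix $g$ and set $T=\widetilde\rho_m(g)$. By Proposition \ref{algebraic} every eigenvalue of $T$ is a root of unity. Letting $M$ be the least common multiple of their orders, the semisimple part of $T^M$ is the identity, so $U:=\widetilde\rho_m(g^M)=T^M$ is unipotent. If $T$ were not semisimple, then $U\neq\id$, and there is a length-two Jordan chain: an $\omega\in V$ with $\eta:=(U-\id)\omega\neq 0$ and $(U-\id)\eta=0$. Since $g^{Mk}\in\widetilde\Bir_m(X,\Delta)$ with $\widetilde\rho_m(g^{Mk})=U^k$, and since $(U-\id)^2\omega=0$ gives $U^k\omega=\omega+k\eta$, invariance of $I$ forces $I(\omega)=I(\omega+k\eta)$ for every $k$.

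I would then close by a growth estimate contradicting this constancy. Writing $\omega$ and $\eta$ locally as $\varphi(dz)^m$ and $\psi(dz)^m$, one has $I(\omega+k\eta)=\int_X|\varphi+k\psi|^{2/m}\,d\mu$ in the evident local notation. Because $\eta\neq 0$ forces $\psi\not\equiv 0$, and hence $I(\eta)>0$, Fatou's lemma gives $\liminf_{k\to\infty}|k|^{-2/m}I(\omega+k\eta)\geq I(\eta)>0$, so $I(\omega+k\eta)\to\infty$. This contradicts $I(\omega+k\eta)=I(\omega)$, and therefore $T$ is semisimple.

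I expect the only genuinely delicate point to be the invariance of $I$ in the first paragraph: one must justify that the $L^{2/m}$ measure is preserved under pullback by a merely birational self-map of $X$, which rests on the fact that $\widetilde\rho_m(g)\omega$ is by definition the meromorphic pullback $g^*\omega$ together with the nullity of the loci of non-isomorphism. Once $I(\eta)>0$ is recorded, the Fatou growth estimate and the reduction to the unipotent case via Proposition \ref{algebraic} are routine.
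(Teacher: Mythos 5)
Your proof is correct and takes essentially the same route as the paper's, which both follow \cite[Proposition 14.7]{u}: the key ingredients in each case are the finiteness of the $L^{2/m}$-integral (Lemma \ref{integrable}), its invariance under the birational action, the root-of-unity eigenvalues supplied by Proposition \ref{algebraic}, and the linear growth along a Jordan chain forcing that invariant integral to diverge. Your only deviations---passing to the power $g^M$ to reduce to the unipotent case, and making the divergence explicit via Fatou's lemma rather than citing Ueno---are cosmetic repackagings of the paper's direct computation $(g^l)^*\varphi_1=\alpha^l\varphi_1+l\alpha^{l-1}\varphi_2$ with $|\alpha|=1$.
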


\begin{proof}If $\widetilde {\rho}_{m}(g)$ is not semi-simple, there 
exist two linearly independent elements $\varphi_1,\ \varphi_2 
\in H^0(X,m(K_X+\Delta))$ and nonzero $\alpha \in \mathbb{C}$ such that 
$$g^*\varphi_1=\alpha\varphi_1+\varphi_2,\ g^*\varphi_2=\alpha \varphi_2 
$$ 
by considering Jordan's decomposition of $g^*$. 
Here, we denote $\widetilde {\rho}_m(g)$ by $g^*$ for simplicity. 
By 
Proposition \ref{algebraic}, we see that $\alpha$ is a root of unity. 
Let $l$ be a positive integer. Then we have 
$$(g^l)^*\varphi_1=\alpha^l\varphi _1+l\alpha^{l-1}\varphi_2. 
$$ 
Since $g$ is a birational map, we have $$\int_{X} 
(\varphi_1 \wedge \bar{\varphi}_1)^{1/m}=\int_{X} 
((g^l)^*\varphi_1 \wedge (g^l)^*\bar{\varphi}_1)^{1/m}. $$ On the other hand, we have 
$$
\lim_{l \to \infty} \int_{X} ((g^l)^*\varphi_1 \wedge (g^l)^*\bar{\varphi}_1)^{1/m}
= \infty.
$$ 
For details, see the proof of \cite[Proposition 14.7]{u}. 
However,  we know $\int_{X} (\varphi_1 \wedge \bar{\varphi}_1)^{1/m}< 
\infty $ by Lemma \ref{integrable}. This is a contradiction.
\end{proof}

\begin{prop}\label{algebraic-2} 
The number $N_{m,\omega}$ 
in {\em{Proposition \ref{algebraic}}} 
is uniformly bounded for every $\omega\in H^0(X, m(K_X+\Delta))$. 
Therefore, we can take a positive integer $N_m$ such that 
$N_m$ is divisible by $N_{m, \omega}$ for every $\omega$. 
\end{prop}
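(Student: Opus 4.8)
The plan is to obtain the uniform bound directly from the Betti-number estimate recorded in Remark \ref{rem_betti}, feeding it into a standard growth property of the Euler function. Write $V=H^0(X,m(K_X+\Delta))$, a finite-dimensional $\mathbb C$-vector space. For each nonzero $\omega\in V$, Remark \ref{rem_betti} supplies the inequality $\varphi(N_{m,\omega})\le b_n(Y'_\omega)$, where $\varphi$ is the Euler function and $Y'_\omega$ is the smooth projective manifold produced, for this particular $\omega$, in the proof of Proposition \ref{algebraic} (equivalently, in the proof of \cite[Proposition 4.9]{gongyo-aban}). The number-theoretic input is that $\varphi$ is proper: for every bound $B$ the set $\{N\in\mathbb Z_{>0}\mid \varphi(N)\le B\}$ is finite, because $\varphi(N)\to\infty$ as $N\to\infty$. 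Hence, once $b_n(Y'_\omega)$ is bounded independently of $\omega$, the integers $N_{m,\omega}$ are confined to a finite set, and one may take $N_m$ to be their least common multiple.

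So the real content is to check that $b_n(Y'_\omega)$ is bounded uniformly in $\omega$. The key observation I would use is that $Y'_\omega$ depends on $\omega$ only through the divisor $\mathrm{div}(\omega)$ of zeros and poles of $\omega$ (via the associated $m$-th root covering that converts $\omega$ into an honest pluricanonical form on which the Nakamura--Ueno--Deligne argument applies), and that this divisor is a member of the fixed linear system $|m(K_X+\Delta)|$. Thus, as $\omega$ ranges over $V\setminus\{0\}$, the varieties $Y'_\omega$ vary in a bounded family parametrized by $\mathbb P(V)$. A bounded family of smooth projective $n$-folds has uniformly bounded $n$-th Betti numbers, say $b_n(Y'_\omega)\le b$ for all $\omega$; concretely, I would resolve the total space of the family simultaneously and use constructibility and upper semicontinuity of Betti numbers to control the finitely many jumping loci. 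This yields $\varphi(N_{m,\omega})\le b$ for every $\omega$.

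Combining the two steps, the set $\{N_{m,\omega}\mid 0\ne\omega\in V\}$ is contained in the finite set $\{N\mid\varphi(N)\le b\}$, hence is itself finite; choosing $N_m$ to be the least common multiple of its elements produces a single positive integer divisible by $N_{m,\omega}$ for every $\omega$, as required. The step I expect to be the main obstacle is precisely the uniform Betti-number bound: one must ensure that passing to resolutions of the possibly degenerate covers $Y'_\omega$ does not let $b_n$ grow without bound, which is exactly where the boundedness of the family $\{Y'_\omega\}_{\omega\in\mathbb P(V)}$ has to be used carefully.
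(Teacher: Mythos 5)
Your proposal matches the paper's proof in all essentials: the paper constructs the family of $m$-th root covers explicitly inside the $\mathbb{P}^1$-bundle $\mathbb{P}_X(\mathcal O_X(-K_X)\oplus\mathcal O_X)$, patched from the local equations of the $\omega$'s, over $\mathbb{P}^N=\mathbb{P}(H^0(X,m(K_X+\Delta)))$, then stratifies $\mathbb{P}^N$ and takes simultaneous resolutions (with the snc compatibility with $\Delta$) to get a uniform bound $b_n(\widetilde Y_p)\le b$, and concludes via Remark \ref{rem_betti} together with the finiteness of $\{N\mid \varphi(N)\le b\}$ --- exactly your two steps. The only cosmetic difference is that you invoke constructibility/semicontinuity in a bounded family where the paper carries out the stratification and simultaneous resolution explicitly following Ueno's argument; this is the same mechanism.
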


\begin{proof}We consider the projective space bundle 
$$\pi:M:=\mathbb{P}_{X}(\mathcal{O}_{X}(-K_X) \oplus \mathcal{O}_X) \to X$$
and 
\begin{align*}
V&:=M\times \mathbb{P}(H^0(X, \mathcal{O}_{X}(m(K_X+\Delta)))) \\ &\to X 
\times   \mathbb{P}(H^0(X, \mathcal{O}_{X}(m(K_X+\Delta)))). 
\end{align*}
We fix a basis $\{\omega_0,\,\omega_1,\,\dots,\,\omega_N\}$ of 
$H^0(X, \mathcal{O}_{X}(m(K_X+\Delta)))$. 
By using this basis, 
we can identify 
$\mathbb{P}(H^0(X, \mathcal{O}_{X}(m(K_X+\Delta))))$ with 
$\mathbb{P}^{N}$. We write  the coordinate of $\mathbb{P}^N$ as 
$(a_0: \cdots: a_N)$ under this identification.
Set $\Delta=\Delta^+-\Delta^-$, where $\Delta^+$ and $\Delta^-$ 
are effective and have no common irreducible components. Let $\{U_{\alpha}\}$ 
be coordinate neighborhoods of $X$ with holomorphic coordinates 
$(z_{\alpha}^1,z_{\alpha}^2, \cdots, z_{\alpha}^n)$. For any $i$, 
we can write $\omega_i$ locally as 
\begin{equation*}
\omega_i|_{U_{\alpha}}=\frac{\varphi_{i,\alpha}}{\delta_{i,\alpha}}
(dz_{\alpha}^1\wedge \cdots \wedge dz_{\alpha}^n)^m,
\end{equation*}
where $\varphi_{i,\alpha}$ and $\delta_{i,\alpha}$ are holomorphic 
with no common factors, and $\frac{\varphi_{i,\alpha}}{\delta_{i,\alpha}}$ 
has poles at most $m\Delta^{+}$. We may assume that $\{U_{\alpha}\}$ 
gives a local trivialization of $M$, that is, 
$M|_{U_{\alpha}} 
:= \pi^{-1}U_{\alpha} \simeq U_{\alpha} 
\times \mathbb{P}^{1}$. We set a coordinate 
$(z_{\alpha}^1,z_{\alpha}^2, \cdots, z_{\alpha}^n, \xi_{\alpha}^0:\xi_{\alpha}^1)$ of 
$U_{\alpha} \times \mathbb{P}^1$ with the homogeneous 
coordinate $(\xi_{\alpha}^0:\xi_{\alpha}^1)$ of $\mathbb{P}^1$. Note that 
$$\frac{\xi_{\alpha}^0}{\xi_{\alpha}^1}= 
k_{\alpha \beta} \frac{\xi_{\beta}^0}{\xi_{\beta}^1}\ 
\text{in}\ M|_{U_{\alpha}\bigcap U_{\beta}},$$ 
where $k_{\alpha \beta}
=\mathrm{det}(\partial z_{\beta}^i/\partial z_{\alpha}^j)_{1\leq i,j \leq n}$. 
Set 
$$Y_{U_{\alpha}}=\{ (\xi_{\alpha}^0)^m\prod_{i=0}^{N}
\delta_{i,\alpha}-(\xi_{\alpha}^1)^m\sum_{i=0}^{N}
\hat{\delta}_{i,\alpha}a_i\varphi_{i,\alpha}=0\} 
\subset U_{\alpha} \times \mathbb{P}^1 \times \mathbb{P}^N,$$
where $\hat{\delta}_{i,\alpha}=\delta_{0,\alpha} 
\cdots \delta_{i-1,\alpha} \cdot \delta_{i+1,\alpha} \cdots \delta_{N,\alpha}$.
By easy calculations, we see that $\{ Y_{U_{\alpha}}\}$ can be 
patched and 
we obtain $Y$. 
We note that $Y$ 
may have singularities and be reducible. The induced 
projection $f:Y \to \mathbb{P}^N$ is surjective and equidimensional. 
Let $q:Y\to X$ be the natural projection. 
By the same arguments as in the proof of 
\cite[Theorem 14.10]{u}, we have a suitable stratification 
$\mathbb P^N=\amalg _i S_i$, where 
$S_i$ is smooth and locally closed in $\mathbb P^N$ for every 
$i$, such that $f^{-1}(S_i)\to S_i$ has 
a simultaneous resolution with good properties for every $i$. 
Therefore, we may assume that there is a positive constant $b$ such that 
for every $p\in \mathbb P^N$ we have 
a resolution $\mu_p:\widetilde {Y}_p\to 
Y_p:=f^{-1}(p)$ with the properties 
that 
$b_n(\widetilde {Y}_p)\leq b$ and 
that $\widetilde \mu_p^*\Delta\cup \Exc(\widetilde \mu_p)$ has a simple normal crossing 
support, where $\widetilde \mu_p:\widetilde Y_p\overset{\mu_p}{\to} Y_p\overset{q}{\to} X$.  
Thus, by Remark \ref{rem_betti}, we 
obtain Proposition \ref{algebraic-2}. 
\end{proof}

Now we have the main theorem of this subsection. 
We will use it in the following subsections. 

\begin{thm}\label{finiteness-klt}Let $(X,\Delta)$ be a 
projective subklt pair such that 
$X$ is smooth, $\Delta$ has a simple normal crossing support, and 
$m(K_X + \Delta)$ is Cartier where $m$ is a positive integer. 
Then $\widetilde {\rho}_{m}
(\widetilde \Bir _m(X, \Delta))$ is a 
finite group. 
\end{thm}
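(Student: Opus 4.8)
The plan is to show that the image $\widetilde{\rho}_m(\widetilde{\Bir}_m(X,\Delta))$ is a finite subgroup of $\Aut_{\mathbb C}(H^0(X,m(K_X+\Delta)))=GL_N(\mathbb C)$ by combining the three preparatory results just established. First I would record the two key structural facts: by Proposition~\ref{semi-simpleness}, every $g^*=\widetilde{\rho}_m(g)$ is semi-simple, hence diagonalizable; and by Proposition~\ref{algebraic-2} there is a single positive integer $N_m$ such that whenever $g^*\omega=\lambda\omega$ for an eigenvector $\omega\in H^0(X,m(K_X+\Delta))$, we have $\lambda^{N_m}=1$, with $N_m$ independent of both $g$ and $\omega$. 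The strategy is to leverage these to bound the image inside a fixed finite set.

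The core step is the following observation. Fix $g\in\widetilde{\Bir}_m(X,\Delta)$. Since $g^*$ is semi-simple, it is diagonalizable, so $H^0(X,m(K_X+\Delta))$ has a basis of eigenvectors of $g^*$; each eigenvalue is a root of unity of order dividing $N_m$ by Proposition~\ref{algebraic}. Therefore $(g^*)^{N_m}$ acts as the identity on a spanning set of eigenvectors, whence $(g^*)^{N_m}=\id$. Thus every element of the image group $G:=\widetilde{\rho}_m(\widetilde{\Bir}_m(X,\Delta))$ satisfies $x^{N_m}=\id$, i.e. $G$ is a subgroup of $GL_N(\mathbb C)$ of bounded exponent $N_m$, and moreover every element of $G$ is diagonalizable.

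To conclude finiteness from bounded exponent I would invoke Burnside's theorem on linear groups of bounded exponent: a subgroup of $GL_N(\mathbb C)$ in which every element has order dividing a fixed $N_m$ is finite. (Equivalently, one can argue directly: since each $g^*$ is semi-simple with eigenvalues among the finitely many $N_m$-th roots of unity, the traces $\operatorname{tr}((g^*)^k)$ for $k=0,1,\dots$ take only finitely many values; a finitely generated group satisfying a fixed exponent and consisting of semi-simple elements with a common bound on eigenvalue orders is finite, and one reduces to the finitely generated case since finiteness is detected on finitely generated subgroups.) Either formulation gives that $G$ is finite.

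The main obstacle I anticipate is not the finite-group theory but ensuring the hypotheses of Propositions~\ref{algebraic} and~\ref{semi-simpleness} are genuinely uniform across all of $\widetilde{\Bir}_m(X,\Delta)$: the crucial input is precisely that $N_m$ in Proposition~\ref{algebraic-2} does not depend on $g$, which is exactly what was extracted from the family construction over $\mathbb P^N$ via the uniform Betti-number bound in Remark~\ref{rem_betti}. Given that uniform bound, the semi-simplicity plus the common root-of-unity constraint force the bounded exponent, and Burnside's theorem finishes the argument. The only care needed is the passage to finitely generated subgroups when applying Burnside, which is standard since $G$ is a union of its finitely generated subgroups and each such subgroup is finite of exponent dividing $N_m$, with a uniform bound on the number of elements coming from $N_m$ and $N$.
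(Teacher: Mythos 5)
Your proposal is correct and follows essentially the same route as the paper: semi-simplicity from Proposition~\ref{semi-simpleness} plus the uniform bound $N_m$ from Proposition~\ref{algebraic-2} give that every $\widetilde{\rho}_m(g)$ is diagonalizable with $(g^*)^{N_m}=\id$, and Burnside's theorem on linear groups of bounded exponent (cf.~\cite[Theorem 14.9]{u}) yields finiteness. Your extra care about reducing to finitely generated subgroups is a harmless elaboration of the same argument.
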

\begin{proof} 
By Proposition 
\ref{semi-simpleness}, we see that $\widetilde {\rho}_m(g)$ is diagonalizable. Moreover,  
Proposition \ref{algebraic-2} implies that the order of $\widetilde {\rho}_m(g)$ is 
bounded by a positive constant $N_m$ which is 
independent of $g$. Thus 
$\widetilde {\rho}_{m}(\widetilde \Bir _m (X, \Delta))$ is a finite 
group by Burnside's theorem 
(see, for example, \cite[Theorem 14.9]{u}). 
\end{proof}

As a corollary, we obtain Theorem \ref{main-thm} 
for klt pairs without assuming the semi-ampleness of 
log canonical divisors. 

\begin{cor}\label{klt-case} 
Let $(X, \Delta)$ be a projective klt pair such that 
$m(K_X+\Delta)$ is Cartier where $m$ is a positive integer. 
Then $\rho _m (\Bir (X, \Delta))$ is a finite group.  
\end{cor}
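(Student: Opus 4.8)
The plan is to reduce the statement to the smooth, simple normal crossing situation already settled in Theorem \ref{finiteness-klt}, and then to identify $\rho_m$ with the restriction of the $\widetilde B$-pluricanonical representation $\widetilde\rho_m$ to the subgroup $\Bir\subset\widetilde\Bir_m$.

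First I would take a log resolution $\psi\colon X'\to X$ of $(X,\Delta)$ and define $\Delta'$ by $K_{X'}+\Delta'=\psi^*(K_X+\Delta)$. Since $(X,\Delta)$ is klt, every discrepancy exceeds $-1$, so the coefficients of $\Delta'$ are less than $1$; hence $(X',\Delta')$ is a projective subklt pair with $X'$ smooth and $\Delta'$ of simple normal crossing support, and $m(K_{X'}+\Delta')=\psi^*(m(K_X+\Delta))$ is Cartier. By the Remark following Definition \ref{B-bir}, the assignment $g\mapsto \psi^{-1}\circ g\circ\psi$ yields a group isomorphism $\Bir(X,\Delta)\simeq \Bir(X',\Delta')$. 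Because $\psi$ is proper birational with normal target, $\psi_*\mathcal O_{X'}=\mathcal O_X$, so the projection formula gives a canonical identification
\[
H^0(X',m(K_{X'}+\Delta'))=H^0(X',\psi^*(m(K_X+\Delta)))\simeq H^0(X,m(K_X+\Delta)).
\]

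Next I would check that under these two identifications the representation $\rho_m$ of $\Bir(X,\Delta)$ coincides with the restriction of $\widetilde\rho_m$ to $\Bir(X',\Delta')\subset\widetilde\Bir_m(X',\Delta')$, the last inclusion being part of Definition \ref{tilde-rep}. For $g\in\Bir(X',\Delta')$ one compares the action defined via a common resolution through $f^*=\alpha_*\beta^*$ with the action of $\widetilde\rho_m(g)$ on meromorphic $m$-ple $n$-forms; both are realized by pulling back forms along the same birational map, so they agree. Granting this, $\rho_m(\Bir(X,\Delta))$ is identified with $\widetilde\rho_m(\Bir(X',\Delta'))$, which is a subgroup of $\widetilde\rho_m(\widetilde\Bir_m(X',\Delta'))$. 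The latter is finite by Theorem \ref{finiteness-klt}, hence so is $\rho_m(\Bir(X,\Delta))$.

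The only real content beyond bookkeeping is the compatibility asserted in the previous paragraph: one must verify that the $B$-pluricanonical action of $\rho_m$, defined abstractly via the equality $\alpha^*(K_X+\Delta)=\beta^*(K_X+\Delta)$ on a common resolution, agrees with the concrete pullback of $L^{2/m}$-integrable forms underlying $\widetilde\rho_m$, and in particular that $\Bir(X',\Delta')$ genuinely lands inside $\widetilde\Bir_m(X',\Delta')$. Once the two descriptions of the action are matched, finiteness is immediate from Theorem \ref{finiteness-klt}; the passage from the possibly singular klt pair to its smooth subklt model is harmless precisely because the log resolution preserves both the group $\Bir$ and the space of pluricanonical sections.
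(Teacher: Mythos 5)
Your proposal is correct and is essentially the paper's own argument: pass to a crepant log resolution, identify $\Bir(X,\Delta)\simeq\Bir(X',\Delta')$ and $H^0(X,m(K_X+\Delta))\simeq H^0(X',m(K_{X'}+\Delta'))$, use the natural inclusion $\Bir(X',\Delta')\subset\widetilde\Bir_m(X',\Delta')$ from Definition \ref{tilde-rep}, and conclude by Theorem \ref{finiteness-klt}. The compatibility you spell out in the last paragraph is exactly what the paper records as the containment $\rho_m(\Bir(Y,\Delta_Y))\subset\widetilde\rho_m(\widetilde\Bir_m(Y,\Delta_Y))$, so there is no gap.
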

\begin{proof}
Let $f:Y\to X$ be a log resolution of $(X, \Delta)$ such that 
$K_Y+\Delta_Y=f^*(K_X+\Delta)$. 
Since 
$$\rho_m (\Bir (Y, \Delta_Y))\subset \widetilde \rho _m 
(\widetilde \Bir _m(Y, \Delta_Y)), $$ 
$\rho_m(\Bir (Y, \Delta_Y))$ is a finite group by 
Theorem \ref{finiteness-klt}.  
Therefore, we obtain that 
$\rho_m(\Bir (X, \Delta))\simeq \rho_m(\Bir (Y, \Delta_Y))$ is a finite 
group. 
\end{proof}

\subsection{Lc pairs with big log canonical divisor}\label{general type case}
In this subsection, we prove the following theorem. 
The proof is essentially the same as that of Case 1 in 
\cite[Theorem 3.5]{fujino-abundance}. 

\begin{thm}\label{finiteness1}Let $(X,\Delta)$ be a 
projective sublc 
pair such that $K_X + \Delta$ is big. 
Let $m$ be a positive integer such that $m(K_X+\Delta)$ is Cartier. 
Then $\rho_{m}(\mathrm{Bir}(X,\Delta))$ is a finite group.  
\end{thm}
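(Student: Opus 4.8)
The plan is to reduce, via the log canonical model of $(X,\Delta)$, to the classical finiteness of the automorphism group of a polarized variety of log general type, rather than to the subklt case of Theorem \ref{finiteness-klt}. The reason for not using the latter directly is the reduced part $\Delta^{=1}$ of the boundary: its presence allows pluricanonical sections to acquire poles of order $m$ along $\Delta^{=1}$, and such sections cannot be realized on any subklt pair, whose sections are forced to have strictly smaller poles there. Thus Theorem \ref{finiteness-klt} cannot see the representation $\rho_m$ on the full space $H^0(X,m(K_X+\Delta))$ when $\Delta^{=1}\ne 0$, and the big case genuinely needs a different mechanism.

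First I would normalize the setup. By the remark following Definition \ref{B-bir}, both $\Bir(X,\Delta)$ and the representation $\rho_m$ are preserved under crepant birational morphisms, so I may fix a log resolution and assume that $X$ is smooth, $\Supp\Delta$ is simple normal crossing, $\Delta$ is sub-lc, and $K_X+\Delta$ is big. Since $K_X+\Delta$ is big, its log canonical ring $R=\bigoplus_{k\ge 0}H^0(X,\mathcal O_X(\llcorner k(K_X+\Delta)\lrcorner))$ is finitely generated, and $Y:=\mathrm{Proj}\,R$ is the log canonical model, carrying an ample divisor $H$ descended from $K_X+\Delta$ with $mH$ Cartier. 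The induced birational contraction $X\dashrightarrow Y$ identifies $H^0(X,m(K_X+\Delta))$ with $H^0(Y,\mathcal O_Y(mH))$.

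Next I would show that every $g\in\Bir(X,\Delta)$ descends to a biregular automorphism of $Y$. Fixing a common resolution with $\alpha^*(K_X+\Delta)=\beta^*(K_X+\Delta)$ as in Definition \ref{B-bir}, the linear map $g^*=\rho_m(g)$ preserves each graded piece of $R$ compatibly with multiplication, hence defines a graded automorphism of $R$ and therefore an automorphism $g_Y$ of $Y=\mathrm{Proj}\,R$ with $g_Y^*H\sim_{\mathbb Q}H$. Consequently $\rho_m$ factors as $\Bir(X,\Delta)\to\Aut(Y,H)\to\Aut_{\mathbb C}(H^0(Y,\mathcal O_Y(mH)))$, where $\Aut(Y,H)$ denotes the automorphisms of $Y$ fixing the class of $H$. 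The conclusion then follows from the finiteness of $\Aut(Y,H)$: since $H$ is the ample log canonical class of $Y$, that variety is of log general type, and the group of automorphisms preserving its log canonical polarization is finite, so $\rho_m(\Bir(X,\Delta))$ is finite as well.

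The hard part will be the middle step: verifying that a $B$-birational self-map really descends to a genuine biregular automorphism of the log canonical model, together with the finite generation of $R$ for a big sub-lc class and the finiteness of $\Aut(Y,H)$. This is exactly where bigness is indispensable — it is what guarantees that $Y$ is a projective variety polarized by the log canonical class, so that $B$-birational self-maps become honest automorphisms of a canonically polarized variety, whose automorphism group is finite. This is the log analogue of the argument for Case 1 in \cite[Theorem 3.5]{fujino-abundance}.
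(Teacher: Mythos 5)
Your reduction has two load-bearing steps, and both are genuine gaps rather than deferred routine work. First, the finite generation of $R=\bigoplus_k H^0(X,\mathcal O_X(\llcorner k(K_X+\Delta)\lrcorner))$: the theorem is stated for \emph{sublc} pairs, where $\Delta$ may have negative coefficients, and no finite generation statement is available in that generality; even for genuinely lc pairs with $K_X+\Delta$ big, finite generation of the log canonical ring is not a BCHM consequence (BCHM covers klt) but a deep MMP theorem of Birkar and Hacon--Xu that was essentially contemporaneous with this paper, and the paper's proof is designed precisely to avoid any such input. Second, and more seriously, the finiteness of $\Aut(Y,H)$ as you define it (automorphisms of $Y$ fixing the class of $H$) is false: take $(X,\Delta)=(\mathbb P^1,\tfrac34(0)+\tfrac34(1)+\tfrac34(\infty))$, which is klt with $K_X+\Delta$ ample of degree $\tfrac14$, so $Y=X$ and $H=K_X+\Delta$; every element of $\mathrm{PGL}_2(\mathbb C)$ fixes the class of $H$, so $\Aut(Y,H)$ is infinite. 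The point is that $Y$ itself need not be of general type --- only the \emph{pair} is --- so the classical finiteness of automorphisms of canonically polarized varieties does not apply. To repair this you would have to descend the boundary, prove $g_Y^*\Delta_Y=\Delta_Y$, and then establish finiteness of $\Aut(Y,\Delta_Y)$ for a (possibly quite singular) lc model; the standard proofs of such log-Iitaka finiteness statements themselves run through finiteness of log pluricanonical representations, so you would be close to circular. There is also a smaller wrinkle: $\Aut(Y,H)$ has no canonical linear action on $H^0(Y,\mathcal O_Y(mH))$ (linearizations of $g_Y^*\mathcal O_Y(mH)\simeq \mathcal O_Y(mH)$ differ by scalars), so $\rho_m$ does not literally factor as you wrote; this one is fixable, since bigness makes $\Phi_{m'}$ birational for large $m'$, hence $\Bir(X,\Delta)\to\Aut(Y)$ is injective and finiteness of the target would bound orders and let you conclude by Burnside's theorem --- but you did not address it.

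Your stated reason for abandoning Theorem \ref{finiteness-klt} is also mistaken, and the paper's actual proof shows exactly how the subklt theorem \emph{does} see the full space $H^0(X,m(K_X+\Delta))$. Bigness enters only through Kodaira's lemma, $m'(K_X+\Delta)\sim_{\mathbb Z}\Delta^{=1}+D_{m'}$, and the invariance $g^*\Delta^{=1}\geq \Delta^{=1}$ for $g\in \Bir(X,\Delta)$ gives an inclusion $\Bir(X,\Delta)\subset \widetilde\Bir_{m'}\bigl(X,\Delta-\tfrac{1}{m'}\Delta^{=1}\bigr)$, where the twisted pair is subklt. Theorem \ref{finiteness-klt} then makes the image finite, of some order $a$, which produces a $\Bir(X,\Delta)$-invariant section $s\in H^0(X,a(m'(K_X+\Delta)-\Delta^{=1}))$; multiplication by $s$ embeds $H^0(X,m(K_X+\Delta))$ equivariantly into $H^0(X,(m+m'a)(K_X+\Delta)-a\Delta^{=1})$, the pluricanonical space of the subklt pair $\bigl(X,\Delta-\tfrac{a}{m+m'a}\Delta^{=1}\bigr)$, to which Theorem \ref{finiteness-klt} applies again. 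So sections with full-order poles along $\Delta^{=1}$ are handled by an equivariant twist into the subklt world, with no lc model, no finite generation, and no automorphism-group finiteness --- all of which your route would need and cannot currently supply.
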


Before we start the proof of Theorem \ref{finiteness1}, 
we give a remark. 

\begin{rem}
By Theorem \ref{finiteness1}, 
when $K_X+\Delta$ is big,  
Theorem \ref{main-thm}, the main theorem of this paper, 
holds true without assuming that $K_X+\Delta$ 
is semi-ample. 
Therefore, we state Theorem \ref{finiteness1} separately for some future usage 
(cf.~Corollary \ref{finiteness birational auto}). 
In Case \ref{non-dom} in the proof of 
Theorem \ref{semi-ample1}, which is nothing but Theorem \ref{main-thm}, 
we will use the arguments in the proof of 
Theorem \ref{finiteness1}.    
\end{rem}

\begin{proof} 
By taking a log resolution, we can assume that 
$X$ is smooth and $\Delta$ has a simple normal crossing 
support. 
By Theorem \ref{finiteness-klt}, we can 
also assume that 
$\Delta^{=1} \not= 0$. Since $K_X +\Delta$ is big, for 
a sufficiently large and divisible positive integer $m'$, we obtain an effective 
Cartier divisor $D_{m'}$ such that 
$$m'(K_X+\Delta) \sim_{\mathbb{Z}} \Delta^{=1} +D_{m'}
$$
by Kodaira's lemma. 
It is easy to see that $\Supp g^*\Delta^{=1}\supset\Supp \Delta^{=1}$ for 
every $g\in \Bir (X, \Delta)$. 
This implies that $g^*\Delta^{=1}\geq \Delta^{=1}$. 
Thus, we have a natural inclusion 
$$
\Bir (X, \Delta)\subset \widetilde \Bir_{m'} \left(X, \Delta-\frac{1}{m'}\Delta^{=1}\right).
$$
We consider the $\widetilde B$-birational representation 
\begin{align*}
\widetilde \rho_{m'}: 
\widetilde \Bir_{m'} \left(X, \Delta-\frac{1}{m'}\Delta^{=1}\right) 
\to \Aut_{\mathbb C}H^0(X, 
m'(K_X+\Delta)-\Delta^{=1}). 
\end{align*}
Then, by Theorem \ref{finiteness-klt}, 
$$\widetilde {\rho}_{m'}\left(\widetilde \Bir_{m'} \left(X, \Delta-\frac{1}{m'}\Delta^{=1}
\right)\right)$$ 
is a finite group. 
Therefore, $\widetilde \rho_{m'}(\Bir (X, \Delta))$ is also a finite group. 
We put $a=|\widetilde {\rho}_{m'}(\Bir (X, \Delta))|<\infty$. 
In this situation, we can find a $\Bir(X, \Delta)$-invariant non-zero 
section $s\in H^0(X, a(m'(K_X+\Delta)-\Delta^{=1}))$. 
By using $s$, we have a natural inclusion 
\begin{equation}\tag{$\spadesuit$}\label{siki} 
H^0(X, m(K_X+\Delta))\subseteq H^0(X, (m+m'a)(K_X+\Delta)-a\Delta^{=1}). 
\end{equation} 
By the construction, $\Bir (X, \Delta)$ acts on the both vector spaces 
compatibly. 
We consider the $\widetilde B$-pluricanonical 
representation 
\begin{align*}
\widetilde {\rho}_{m+m'a}&: 
\widetilde \Bir_{m+m'a} \left(X, \Delta
-\frac{a}{m+m'a}\Delta^{=1}\right)
\\ &\to \Aut _{\mathbb C} H^0(X, (m+m'a)(K_X+\Delta)-a\Delta^{=1}).
\end{align*}
Since $$\left(X, \Delta-\frac{a}{m+m'a}\Delta^{=1}\right)$$ is subklt, we have 
that
$$
\widetilde \rho _{m+m'a}\left(\widetilde \Bir _{m+m'a}
\left(X, \Delta-\frac{a}{m+m'a}\Delta^{=1}\right)\right)
$$ 
is a finite group by Theorem \ref{finiteness-klt}. Therefore, 
$\widetilde \rho _{m+m'a}(\Bir (X, \Delta))$ is also a finite group. 
Thus, we obtain that $\rho _m(\Bir (X, \Delta))$ is a finite group 
by the $\Bir (X, \Delta)$-equivariant embedding (\ref{siki}). 
\end{proof}

The following corollary is an answer to the question raised by 
Cacciola and Tasin. 
It is a generalization of the well-known 
finiteness of birational automorphisms of varieties of general type 
(cf.~\cite[Corollary 14.3]{u}). 

\begin{cor}\label{finiteness birational auto}
Let $(X,\Delta)$ be a projective sublc 
pair such that $K_X+\Delta$ is big.
Then $\Bir (X, \Delta)$ is a finite group.
\end{cor}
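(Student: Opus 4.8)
The plan is to deduce this from Theorem \ref{finiteness1} by upgrading the finiteness of the \emph{image} of $\rho_m$ to the finiteness of $\Bir(X,\Delta)$ itself. Since Theorem \ref{finiteness1} already tells us that $\rho_m(\Bir(X,\Delta))$ is a finite group for every $m$ with $m(K_X+\Delta)$ Cartier, the entire problem reduces to controlling $\ker\rho_m$. I would show that, for a suitable $m$, the homomorphism $\rho_m$ is in fact \emph{injective}; then $\Bir(X,\Delta)\cong\rho_m(\Bir(X,\Delta))$ is finite and we are done. This mirrors the classical finiteness of birational automorphisms of a variety of general type (cf.~\cite[Corollary 14.3]{u}), where injectivity comes precisely from the birationality of the pluricanonical map.

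First I would use the bigness of $K_X+\Delta$ to fix a positive integer $m$, large and sufficiently divisible, such that $m(K_X+\Delta)$ is Cartier and the rational map
$$
\Phi:=\Phi_{|m(K_X+\Delta)|}:X\dashrightarrow \mathbb P^N
=\mathbb P(H^0(X,m(K_X+\Delta)))
$$
associated with a basis $\{\omega_0,\dots,\omega_N\}$ of $H^0(X,m(K_X+\Delta))$ is birational onto its image. Such an $m$ exists by Kodaira's lemma and the theory of the Iitaka fibration, and the argument is insensitive to $\Delta$ failing to be effective in the sublc setting. The key point is the equivariance of $\Phi$ under $B$-birational maps. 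Given $g\in\Bir(X,\Delta)$, take a common resolution as in Definition \ref{B-bir},
\[
\xymatrix{ & W \ar[dl]_{\alpha} \ar[dr]^{\beta} \\ X \ar@{-->}[rr]_{g} & & X }
\]
with $\alpha^*(K_X+\Delta)=\beta^*(K_X+\Delta)$. By definition of the action, $g^*\omega$ is characterized by $\alpha^*(g^*\omega)=\beta^*\omega$ for every $\omega\in H^0(X,m(K_X+\Delta))$, since $\alpha^*$ and $\beta^*$ identify $H^0(X,m(K_X+\Delta))$ with $H^0(W,m\alpha^*(K_X+\Delta))=H^0(W,m\beta^*(K_X+\Delta))$ via the projection formula.

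Now suppose $g\in\ker\rho_m$, that is $g^*\omega=\omega$ for all $\omega$. Then $\alpha^*\omega_i=\alpha^*(g^*\omega_i)=\beta^*\omega_i$ for every $i$, so $\Phi\circ\alpha=\Phi\circ\beta$ as rational maps $W\dashrightarrow\mathbb P^N$. Since $g=\beta\circ\alpha^{-1}$ as a birational self-map of $X$, this gives $\Phi\circ g=\Phi$, and the birationality of $\Phi$ onto its image forces $g=\id$. Hence $\rho_m$ is injective, and together with the finiteness of $\rho_m(\Bir(X,\Delta))$ from Theorem \ref{finiteness1} we conclude that $\Bir(X,\Delta)$ is a finite group. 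I expect the main obstacle to be the equivariance step, namely checking cleanly that $B$-birational maps commute with the Iitaka map through the common resolution (the identity $\alpha^*(g^*\omega)=\beta^*\omega$), and securing from bigness a single $m$ for which $\Phi$ is simultaneously birational and makes $m(K_X+\Delta)$ Cartier; once these are established, the rest is formal.
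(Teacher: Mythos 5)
Your proposal is correct and is essentially the paper's own argument: both fix $m$ large and divisible so that $\Phi_{|m(K_X+\Delta)|}$ is birational onto its image (using bigness) and combine this with Theorem \ref{finiteness1}. The only cosmetic difference is that the paper passes to the projectivized representation $\bar\rho_m$ and shows it is injective, whereas you show $\ker\rho_m$ is trivial directly via the common resolution; the mechanism (recovering $g$ from its action through the birational map $\Phi$) is identical.
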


\begin{proof} 
We consider the rational map 
$$
\Phi_m:=\Phi _{|m(K_X+\Delta)|}: X\dashrightarrow \mathbb P(H^0(X, m(K_X+\Delta)))
$$ 
associated to the complete linear system $|m(K_X+\Delta)|$, where 
$m$ is a positive integer 
such that 
$m(K_X+\Delta)$ is Cartier. 
By taking $m\gg 0$, we may assume that 
$\Phi _m:X\dashrightarrow V$ is birational because 
$K_X+\Delta$ is big,  
where $V$ is the image of $X$ by $\Phi _m$. 
The log pluricanonical representation 
$$
\rho_m:\Bir (X, \Delta)\to \Aut _{\mathbb C}(H^0(X, m(K_X+\Delta)))
$$ 
induces the group homomorphism 
$$
\bar\rho_m:\Bir (X, \Delta)\to \Aut (\mathbb P(H^0(X, m(K_X+\Delta)))). 
$$
Note that $\bar\rho_m(g)$ leaves $V$ invariant for every $g\in \Bir (X, \Delta)$ by the 
construction. 
Since $\Phi _m:X\dashrightarrow V$ is birational, $\bar\rho_m$ is injective. 
On the other hand, we see that 
$\bar\rho_m(\Bir (X, \Delta))$ is finite by Theorem \ref{finiteness1}. 
Therefore, $\Bir (X, \Delta)$ is a finite group. 
\end{proof}

\begin{rem} 
By the proof of Corollary \ref{finiteness birational auto} and Theorem \ref{finiteness-klt}, 
we obtain the following finiteness of $\widetilde \Bir _m(X, \Delta)$. 

Let $(X, \Delta)$ be a projective subklt pair such that 
$X$ is smooth and that $\Delta$ has a simple normal crossing support. 
Let $m$ be a positive integer such that $m(K_X+\Delta)$ is Cartier and 
that $|m(K_X+\Delta)|$ defines a birational map. 
Then $\widetilde \Bir _m (X, \Delta)$ is a finite group. 
\end{rem}

\subsection{Lc pairs with semi-ample log canonical divisor}
\label{semi-ample case}
Theorem \ref{semi-ample1} 
is one of the main results of this paper (see Theorem \ref{main-thm}). 
We will treat many applications of Theorem \ref{semi-ample1} 
in Section \ref{sec1}. 

\begin{thm}\label{semi-ample1}Let $(X,\Delta)$ be an $n$-dimensional 
projective lc pair such that $K_X+\Delta$ is semi-ample. 
Let $m$ be a positive integer such that 
$m(K_X+\Delta)$ is Cartier. 
Then $\rho_{m}(\mathrm{Bir}(X,\Delta))$ is a finite group. 
\end{thm}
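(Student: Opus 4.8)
The plan is to run an induction on $n=\dim X$, reducing the general lc case to the subklt case already settled in Theorem \ref{finiteness-klt} together with the lower-dimensional instances of the theorem itself. First I would apply the dlt blow-up of Theorem \ref{dltblowup} to $(X,\Delta)$; since $\Bir(X,\Delta)$ and the representation $\rho_m$ are unchanged under a crepant birational modification (the remark following Definition \ref{B-bir}) and semi-ampleness of $K_X+\Delta$ is preserved by pullback, I may assume $(X,\Delta)$ is $\mathbb Q$-factorial dlt. If $\Delta^{=1}=0$ the pair is klt and Corollary \ref{klt-case} finishes the proof, so I set $S=\Delta^{=1}\neq 0$ and write $K_S+\Delta_S=(K_X+\Delta)|_S$ by adjunction.

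The core of the argument is the restriction-to-the-boundary sequence
$$0\to H^0(X,m(K_X+\Delta)-S)\to H^0(X,m(K_X+\Delta))\overset{r}{\longrightarrow} H^0(S,m(K_S+\Delta_S)),$$
which, by Remark \ref{rem215}, is equivariant for the induced action of $\rho_m(g)$ on all three terms. On the kernel $H^0(X,m(K_X+\Delta)-S)=H^0(X,m(K_X+\Delta'))$ with $\Delta'=\Delta-\tfrac1m S$ subklt, so $\Bir(X,\Delta)\subset\widetilde{\Bir}_m(X,\Delta')$ and Theorem \ref{finiteness-klt} (after passing to a log resolution) shows the action there factors through a finite group; in particular, by Proposition \ref{algebraic}, every eigenvalue of $\rho_m(g)$ on the kernel is a root of unity of order dividing some $N_1$ independent of $g$. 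On the image inside $H^0(S,m(K_S+\Delta_S))$ I would use the induction hypothesis: although $S$ is only sdlt and possibly non-normal, Lemma \ref{newlem} lets me identify the $\Bir(X,\Delta)$-action on this space with $B$-pluricanonical actions on the minimal (hence normal) lc centers $V\subseteq S$, which are projective lc pairs of dimension $<n$ with $K_V+\Delta_V=(K_X+\Delta)|_V$ semi-ample; the theorem in lower dimension — with the $K+\Delta\sim_{\mathbb Q}0$ base case supplied by \cite{gongyo-aban} and Theorem \ref{finiteness1} covering any center on which the restriction becomes big — then bounds these eigenvalues by roots of unity of order dividing some $N_2$.

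Granting these two finiteness statements, the remaining and decisive step is to prove that $\rho_m(g)$ is \emph{semisimple}, for then its eigenvalues, all of which occur among the eigenvalues on the kernel and on the image, are roots of unity of order dividing $N=\mathrm{lcm}(N_1,N_2)$, whence $\rho_m(g)^N=\id$ and Burnside's theorem (as in the proof of Theorem \ref{finiteness-klt}) yields finiteness of $\rho_m(\Bir(X,\Delta))$. I expect semisimplicity to be the main obstacle. In the subklt case it followed from the $L^{2/m}$-growth estimate of Proposition \ref{semi-simpleness}, but that argument breaks down here because sections of $m(K_X+\Delta)$ acquire poles along $S$ and are no longer $L^{2/m}$-integrable. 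A priori the exact sequence only shows that a suitable power $\rho_m(g)^N=\id+n$ is unipotent with $n$ mapping into the $L^{2/m}$-integrable kernel and $n^2=0$, so the task is to exclude such a nontrivial Jordan block straddling the boundary. The plan is to combine the equivariance of $r$ (Remark \ref{rem215}), which forces the nilpotent part to land in the subklt kernel, with a refined integrability argument on that kernel, so that the growth $\rho_m(g)^{lN}\varphi=\varphi+l\,n\varphi$ contradicts the change-of-variables invariance of the $L^{2/m}$-norm in the spirit of Proposition \ref{semi-simpleness}. Making this contradiction rigorous in the presence of the fixed poles along $S$ is, I expect, the crux of the proof.
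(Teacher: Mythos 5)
Your reductions (dlt blow-up, klt case via Corollary \ref{klt-case}) and your two finiteness inputs (the subklt action on the kernel $H^0(X,m(K_X+\Delta)-S)$ via $\widetilde\Bir_m(X,\Delta-\frac1m S)$ and Theorem \ref{finiteness-klt}, the induction on lc centers via Lemma \ref{newlem}) all match steps that genuinely occur in the paper. But the proof is incomplete exactly where you flag it, and the gap is not closable by the ``refined integrability argument'' you sketch. After passing to the power $N$, you are left with $\rho_m(g)^N=\id+n$, where $n^2=0$ and $n$ maps into the kernel; to rule out $n\ne 0$ by the method of Proposition \ref{semi-simpleness} you would need the finiteness of $\int_X(\varphi\wedge\bar\varphi)^{1/m}$ for the vector $\varphi$ being translated, i.e.\ $\rho_m(g)^{lN}\varphi=\varphi+l\,n\varphi$. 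But $\varphi$ necessarily lies \emph{outside} the kernel (otherwise $n\varphi=0$), so $\varphi$ has a pole of full order $m$ along $S$, Lemma \ref{integrable} does not apply, and the integral is $+\infty$; the change-of-variables identity then reads $\infty=\infty$ and yields no contradiction. Integrability of $n\varphi$ alone is useless, since the divergent part of the norm of $\varphi+l\,n\varphi$ is carried by $\varphi$. Indeed, the paper never proves semisimplicity of $\rho_m(g)$ for lc pairs directly; Proposition \ref{semi-simpleness} is invoked only for subklt pairs.

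The missing idea is to use semi-ampleness \emph{globally} through the Iitaka fibration $f:X\to Y$ associated to $k(K_X+\Delta)$, and to split cases according to the horizontal part $\llcorner\Delta^h\lrcorner$, which makes the extension problem disappear rather than solving it. If $\llcorner\Delta^h\lrcorner\ne 0$, then $m(K_X+\Delta)\sim_{\mathbb Q,Y}0$ forces $H^0(X,m(K_X+\Delta)-T)=0$ for $T=\llcorner\Delta\lrcorner$ (a section would restrict on the generic fiber to a section of an anti-effective nonzero divisor): your kernel vanishes, the restriction is injective, and the paper runs precisely your lc-center argument (Lemma \ref{newlem}, cycles $S_j^{p_j}=S_j^{q_j}$ among the finitely many lc centers, induction with $l$ the lcm of the orders) to get $(g^*)^l=\id$ on $H^0(T,m(K_T+\Delta_T))$ and hence, by injectivity and equivariance, on $H^0(X,m(K_X+\Delta))$, concluding by Burnside. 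If $\llcorner\Delta^h\lrcorner=0$, the paper abandons the restriction sequence altogether: by the canonical bundle formula, $K_{X'}+\Delta_{X'}=f'^*(K_{Y'}+\Delta_{Y'}+M)$ with $K_{Y'}+\Delta_{Y'}+M$ big, every $g$ descends to $g_{Y'}\in\Bir(Y',\Delta_{Y'})$, one checks $\Delta_{X'}^{=1}\le f'^*\Delta_{Y'}^{=1}$ and $g^*f'^*\Delta_{Y'}^{=1}+E_g\geq f'^*\Delta_{Y'}^{=1}$, so that $\Bir(X,\Delta)$ embeds in $\widetilde\Bir_{m'}\left(X',\Delta_{X'}-\frac{1}{m'}f'^*\Delta_{Y'}^{=1}\right)$, a \emph{subklt} pair, and the bigness argument of Theorem \ref{finiteness1} (Kodaira's lemma, an invariant section, the equivariant embedding into a subklt space) finishes. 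Your single exact sequence cannot see this dichotomy, and in the all-vertical case the kernel is genuinely nonzero, so your approach stalls at exactly the point you identified as the crux.
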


\begin{proof}We show the statement by the induction on $n$. 
By taking a dlt blow-up (cf.~Theorem \ref{dltblowup}), 
we may assume that $(X,\Delta)$ is a $\mathbb{Q}$-factorial dlt pair. 
Let $f:X \to Y$ be a projective surjective morphism associated to 
$k(K_X+\Delta)$ for a sufficiently large and divisible positive integer $k$. 
By Corollary \ref{klt-case}, we may assume that $\llcorner \Delta \lrcorner\not = 0$.

\begin{case}\label{dominant_case}$\llcorner \Delta^h \lrcorner \not =0$, 
where $\Delta^h$ is the horizontal part of $\Delta$ 
with respect to $f$.
\end{case}

In this case, we put 
$T = \llcorner \Delta \lrcorner$. Since $m(K_X+\Delta)\sim_{\mathbb Q, Y}0$, we see that 
$$H^0(X, \mathcal{O}_{X}(m(K_X+\Delta)-T)) =0.$$
Thus the restricted map
$$H^0(X, \mathcal{O}_{X}(m(K_X+\Delta))) \to H^0(T, \mathcal{O}_{T}(m(K_T+\Delta_T))) 
$$ 
is injective,  where $K_T+\Delta_T=(K_X+\Delta)|_T$. 
Let 
$(V_i, \Delta_{V_i})$ 
be the disjoint union of all the $i$-dimensional lc centers of $(X, \Delta)$ for 
$0\leq i\leq n-1$. We note that 
$\rho_m(\Bir (V_i, \Delta_{V_i}))$ is a finite group 
for every $i$ by the induction on dimension. 
We put $k_i=|\rho_m(\Bir (V_i, \Delta_{V_i}))|<\infty$ for 
$0\leq i\leq n-1$. 
Let $l$ be the least common multiple of $k_i$ for $0\leq i\leq n-1$. 
Let $T=\cup _j T_j$ be the irreducible decomposition. 
Let $g$ be an element of $\Bir (X, \Delta)$. 
By repeatedly using Lemma \ref{newlem}, for 
every $T_j$, we can find lc centers $S_j^i$ of $(X, \Delta)$
$$
\begin{matrix}
X&\overset{g}{\dashrightarrow} &X& \overset{g}{\dashrightarrow} 
&X& \overset{g}{\dashrightarrow}&\cdots&\overset{g}{\dashrightarrow}& X&
\dashrightarrow\\
\cup & & \cup & & \cup &&&&\cup &\\ 
S_j^0&& S_j^1&& S_j^2&&&&S_j^k&
\end{matrix}
$$
such that $S_j^0\subset T_j$, $S_j^i\dashrightarrow S_j^{i+1}$ is a $B$-birational map 
for every $i$, and $$H^0(T_j, m(K_{T_j}+\Delta_{T_j}))\simeq 
H^0(S_j^0, m(K_{S_j^0}+\Delta_{S_j^0}))$$ by the natural 
restriction map, where 
$K_{T_j}+\Delta_{T_j}=(K_X+\Delta)|_{T_j}$ and 
$K_{S_j^0}+\Delta_{S_j^0}=(K_X+\Delta)|_{S_j^0}$. 
Since there are only finitely many lc centers of $(X, \Delta)$, 
we can find $p_j<q_j$ such that 
$S_j^{p_j}=S_j^{q_j}$ and that 
$S_j^{p_j}\ne S_j ^r$ for $r=p_j+1, \cdots, q_j-1$. 
Therefore, $g$ induces a $B$-birational map 
$$
\widetilde g: \coprod_{p_j\leq r\leq q_j-1}S_j^r\dashrightarrow 
\coprod _{p_j\leq r\leq q_j-1}S_j^r 
$$
for every $j$. 
We have an embedding 
$$
H^0(T, \mathcal O_T(m(K_T+\Delta_T)))\subset \bigoplus _j H^0(S_j^{p_j}, m(K_{S_j^{p_j}}
+\Delta_{S_j^{p_j}})), 
$$
where $K_{S_j^{p_j}}+\Delta_{S_j^{p_j}}=(K_X+\Delta)|_{S_j^{p_j}}$ for every $j$. 
First, by the following commutative diagram (cf.~Remark \ref{rem215})  
\[\xymatrix{
0\ar[r]&H^0(T, \mathcal O_T(m(K_T+\Delta_T))) \ar[d]_{(g^*)^l} \ar[r] 
& \bigoplus _j H^0(S_j^{p_j}, m(K_{S_j^{p_j}}+\Delta_{S_j^{p_j}}))) 
\ar[d]^{(\widetilde {g}^*)^l=\id} \\
0\ar[r]& H^0(T, \mathcal O_T(m(K_T+\Delta_T)))
\ar[r] &  \bigoplus _j H^0(S_j^{p_j}, m(K_{S_j^{p_j}}+\Delta_{S_j^{p_j}}))), \\
}\] 
we obtain $(g^*)^l=\id$ on $H^0(T, m(K_T+\Delta_T))$. 
Next, by the following commutative diagram 
(cf.~Remark \ref{rem215})  
\[\xymatrix{
0\ar[r]&H^0(X, \mathcal O_X(m(K_X+\Delta))) \ar[d]_{(g^*)^l} \ar[r] 
& H^0(T, \mathcal O_T(m(K_T+\Delta_T))) 
\ar[d]^{(g^*)^l=\id} \\
0\ar[r]&H^0(X, \mathcal O_X(m(K_X+\Delta))) 
\ar[r] &   H^0(T, \mathcal O_T(m(K_T+\Delta_T))), \\
}\] 
we have that $(g^*)^l=\id$ on $H^0(X, \mathcal O_X(m(K_X+\Delta)))$. 
Thus we obtain that $\rho_m(\Bir (X, \Delta))$ is a finite group 
by Burnside's theorem (cf.~\cite[Theorem 14.9]{u}). 

\begin{case}\label{non-dom}$\llcorner \Delta^h \lrcorner=0.$ 
\end{case}
We can construct the commutative diagram 
\[\xymatrix{
X' \ar[d]_{f'} \ar[r]^{\varphi} & X \ar[d]^{f} \\
Y'\ar[r]_{\psi} & Y  \\
}\] 
with the following properties: 
\begin{itemize}
\item[(a)] $\varphi:X'\to X$ is a log resolution of $(X, \Delta)$. 
\item[(b)] $\psi:Y'\to Y$ is a resolution of $Y$. 
\item[(c)] there is a simple normal crossing divisor 
$\Sigma$ on $Y'$ such that 
$f'$ is smooth and $\Supp \varphi_*^{-1}\Delta\cup \Exc (\varphi)$ is relatively 
normal crossing over $Y'\setminus \Sigma$. 
\item[(d)] $\Supp f'^*\Sigma$ and $\Supp f'^*\Sigma\cup 
\Exc (\varphi)\cup \Supp \varphi_*^{-1}\Delta$ are simple normal crossing 
divisors on $X'$. 
\end{itemize}
Then we have 
$$K_{X'}+\Delta_{X'}= f'^*(K_{Y'} + \Delta_{Y'} +M),$$
where $K_{X'}+\Delta_{X'}=\varphi^*(K_X+\Delta)$, 
$\Delta_{Y'}$ is the discriminant divisor and $M$ is the moduli part of 
$f':(X', \Delta_{X'})\to Y'$. Note that
$$\Delta_{Y'}=\sum(1-c_Q)Q,
$$
where $Q$ runs through all the prime divisors on $Y'$ and 
$$
c_Q=\sup \{t\in \mathbb Q\, | \, K_{X'}+\Delta_{X'}+tf'^*Q \ \text{is sublc 
over the generic point of} \ Q\}.
$$
When we construct $f':X'\to Y'$, 
we first make $f:X\to Y$ toroidal by 
\cite[Theorem 2.1]{ak}, next make it equi-dimensional by \cite[Proposition 4.4]{ak}, 
and finally obtain $f':X'\to Y'$ by taking a resolution. By the above construction, 
we can assume that $f':X'\to Y'$ factors as 
$$
f':X'\overset{\alpha}{\longrightarrow} \widetilde X\overset{\widetilde f}{\longrightarrow}  Y'
$$
with the following properties. 
\begin{itemize}
\item[(1)] $\widetilde f: (\widetilde U\subset \widetilde X)\to (U_{Y'}\subset Y')$, where 
$U_{Y'}=Y'\setminus \Sigma$ and $\widetilde U$ is some Zariski open 
set of $\widetilde X$, is toroidal and equi-dimensional. 
\item[(2)] $\alpha$ is a projective biratioinal morphism and is an isomorphism over $U_{Y'}$. 
\item[(3)] $\widetilde \varphi:=\varphi\circ \alpha^{-1}:\widetilde X\to X$ is a 
morphism such that $K_{\widetilde X}+\Delta_{\widetilde X}=\widetilde \varphi^*(K_X+\Delta)$ and 
that $\Supp \Delta_{\widetilde X}\subset \widetilde X\setminus \widetilde U$. 
\item[(4)] $\widetilde X$ has only quotient singularities (cf.~\cite[Remark 4.5]{ak}). 
\end{itemize}
For the details, see the arguments in \cite{ak}. 
In this setting, it is easy to see that 
$\Supp \Delta_{X'}^{=1}\subset 
\Supp f'^*\Delta_{Y'}^{=1}$.  
Therefore, $\Delta_{X'}^{=1}\leq f'^*\Delta_{Y'}^{=1}$. 
We can check that every $g \in \Bir (X', \Delta_{X'})=\Bir (X, \Delta)$ induces 
$g_{Y'}\in \Bir (Y', \Delta_{Y'})$ which satisfies the 
following commutative 
diagram (see \cite[Theorem 0.2]{ambro} for the subklt case, 
and \cite[Proposition 8.4.9 (3)]{kollar} for the sublc case). 
\[\xymatrix{
X' \ar[d]_{f'} \ar@{-->}[r]^g \ar@{}[dr]|\circlearrowleft & X' \ar[d]^{f'} \\
Y'\ar@{-->}[r]_{g_{Y'}} & Y' \\
}\]
Therefore, we have 
$\Supp  g_{Y'}^*\Delta_{Y'}^{=1}\supset\Supp  \Delta_{Y'}^{=1}$. 
This implies that $$g_{Y'}^*\Delta_{Y'}^{=1}\geq \Delta_{Y'}^{=1}.$$
Thus there is an effective Cartier divisor $E_g$ on $X'$ such that 
$$
g^*f'^*\Delta_{Y'}^{=1}+E_g\geq f'^*\Delta_{Y'}^{=1}
$$ 
and that the codimension of $f'(E_g)$ in $Y'$ is $\geq 2$. 
We note the definitions of $g^*$ and $g_{Y'}^*$ (cf.~Definition \ref{B-bir}). 
Therefore, $g\in \Bir (X', \Delta_{X'})$ induces an 
automorphism $g^*$ of 
$H^0(X', m'(K_{X'}+\Delta_{X'})-f'^*\Delta_{Y'}^{=1})$ where 
$m'$ is a sufficiently large and divisible positive integer $m'$. 
It is because 
\begin{align*}
&H^0(X', m'(K_{X'}+\Delta_{X'})-g^*f'^*\Delta_{Y'}^{=1})\\
&\subset H^0(X', m'(K_{X'}+\Delta_{X'})-f'^*\Delta_{Y'}^{=1}+E_g)\\
&\simeq H^0(X', m'(K_{X'}+\Delta_{X'})-f'^*\Delta_{Y'}^{=1}). 
\end{align*} 
Here, we used the facts that $m'(K_{X'}+\Delta_{X'})=f'^*(m'(K_{Y'}+\Delta_{Y'}+M))$ and 
that $f'_*\mathcal O_{X'}(E_g)\simeq 
\mathcal O_{Y'}$. 
Thus we have a natural inclusion 
$$
\Bir (X', \Delta_{X'})\subset \widetilde \Bir_{m'} \left(X', \Delta_{X'}-\frac{1}{m'}
f'^*\Delta_{Y'}^{=1}\right). 
$$
Note that $$\left(X', \Delta_{X'}-\frac{1}{m'}f'^*\Delta_{Y'}^{=1}\right)$$ 
is subklt because $\Delta_{X'}^{=1}\leq 
f'^*\Delta_{Y'}^{=1}$. 
Since $K_{Y'} + \Delta_{Y'} +M$ is (nef and) big, for a sufficiently large 
and divisible positive integer $m'$, we obtain an effective Cartier 
divisor $D_{m'}$ such that $$m'(K_{Y'}+\Delta_{Y'}+M) \sim_{\mathbb{Z}}  
\Delta_{Y'} ^{=1} +D_{m'}. $$ 
This means that 
$$
H^0(X', m'(K_{X'}+\Delta_{X'})-f'^*\Delta_{Y'}^{=1})\ne 0. 
$$
By considering the natural inclusion 
$$
\Bir (X', \Delta_{X'})\subset \widetilde \Bir_{m'} \left(X', \Delta_{X'}-\frac{1}{m'}
f'^*\Delta_{Y'}^{=1}\right), 
$$ 
we can use the same arguments as in the proof of Theorem \ref{finiteness1}. 
Thus we obtain the finiteness of $B$-pluricanonical 
representations. 
\end{proof}

\begin{rem}\label{rem310} 
Although we did not explicitly state it,  
in Theorem \ref{finiteness-klt}, 
we do not have to assume that 
$X$ is connected. 
Similarly, we can prove Theorems \ref{finiteness1} and \ref{semi-ample1} 
without assuming that $X$ is connected. 
For the details, see \cite[Remark 4.4]{gongyo-aban}. 
\end{rem}

{We close this section with comments on \cite[Section 3]{fujino-abundance} 
and \cite[Theorem B]{gongyo-aban}. 
In \cite[Section 3]{fujino-abundance}, 
we proved Theorem \ref{semi-ample1} for surfaces. 
There, we do not need the notion of $\widetilde B$-birational 
maps. It is mainly because $Y'$ in Case \ref{non-dom} 
in the proof of Theorem \ref{semi-ample1} is a curve 
if $(X, \Delta)$ is not klt and $K_X+\Delta$ is not big. 
Thus, $g_{Y'}$ is an automorphism of $Y'$. 
In \cite[Theorem B]{gongyo-aban}, we proved 
Theorem \ref{semi-ample1} under the assumption that $K_X+\Delta\sim_{\mathbb Q}0$. In 
that case, Case \ref{dominant_case} 
in the proof of Theorem \ref{semi-ample1} 
is sufficient. Therefore, we do not need the notion of $\widetilde B$-birational 
maps in \cite{gongyo-aban}.  
 
\section{On abundance conjecture for 
log canonical pairs}\label{sec1}

In this section, we treat various applications 
of Theorem \ref{main-thm} on the abundance conjecture 
for (semi) lc pairs (cf.~Conjecture \ref{abun}). 
We note that we only treat $\mathbb Q$-divisors in this section. 

Let us introduce the notion of {\em{nef and log abundant 
$\mathbb Q$-divisors}}. 

\begin{defn}[Nef and log abundant divisors]
Let $(X, \Delta)$ be a sublc pair. 
A closed subvariety $W$ of $X$ is called 
an {\em{lc center}} if there exist a resolution $f:Y\to X$ and 
a divisor $E$ on $Y$ such that 
$a(E, X, \Delta)=-1$ and $f(E)=W$.  
A $\mathbb Q$-Cartier $\mathbb Q$-divisor 
$D$ on $X$ is 
called {\em{nef and log abundant with respect to 
$(X, \Delta)$}} 
if and only if 
$D$ is nef and abundant, and $\nu_W^*D|_W$ is nef and 
abundant for every lc center $W$ of the 
pair $(X, \Delta)$, where 
$\nu_W:W^\nu\to W$ is 
the normalization. 
Let $\pi:X\to S$ be a proper morphism onto a variety $S$. 
Then $D$ is {\em{$\pi$-nef and $\pi$-log abundant with respect 
to $(X, \Delta)$}} if and 
only if $D$ is $\pi$-nef and $\pi$-abundant and 
$(\nu_W^*D|_W)|_{W^{\nu}_\eta}$ is abundant, where 
$W^{\nu}_\eta$ is the generic fiber of $W^{\nu}\to \pi(W)$. 
We sometimes simply say that $D$ is nef and log abundant over 
$S$.  
\end{defn}

The following theorem is one of the main theorems of this section 
(cf.~\cite[Theorem 0.1]{fujino-reid}, \cite[Theorem 4.4]{fujino-book}). 
For a relative version of Theorem \ref{thm1}, see Theorem \ref{thm11} below. 
See also Subsection \ref{subsec41}. 

\begin{thm}\label{thm1}
Let $(X, \Delta)$ be a projective lc pair. 
Assume that $K_X+\Delta$ is nef and 
log abundant. 
Then $K_X+\Delta$ is semi-ample. 
\end{thm}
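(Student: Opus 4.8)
The plan is to reduce the statement to Theorem \ref{main-thm} (equivalently Theorem \ref{semi-ample1}) by running a minimal model program and using the log-abundance hypothesis on lc centers as the engine for an inductive argument on dimension. First I would take a dlt blow-up (Theorem \ref{dltblowup}) so that I may assume $(X,\Delta)$ is a $\mathbb{Q}$-factorial dlt pair, noting that nef and log abundance are preserved under a crepant pullback. The base case is when $\llcorner\Delta\lrcorner=0$, i.e.\ $(X,\Delta)$ is klt: here $K_X+\Delta$ nef and abundant implies semi-ampleness by the well-known result for klt pairs (the reference \cite[Corollary 2.5]{fujino-kawamata} cited right after Theorem \ref{thm-a}), so there is nothing new to prove.

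Assuming $S:=\llcorner\Delta\lrcorner\neq 0$, the idea is to exploit the adjunction $(K_X+\Delta)|_S=K_S+\Delta_S$, where $(S,\Delta_S)$ is an slc pair of strictly smaller dimension. The restriction $(K_X+\Delta)|_S$ is again nef and log abundant with respect to $(S,\Delta_S)$, because the lc centers of $(S,\Delta_S)$ are exactly the lc centers of $(X,\Delta)$ contained in $S$, and the abundance condition is inherited on each of them. By induction on dimension (passing to the normalization $S^\nu$ and invoking Theorem \ref{main-thm2} to descend semi-ampleness from $S^\nu$ back to $S$), I may assume $(K_X+\Delta)|_S$ is semi-ample. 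The strategy is then to run a $(K_X+\Delta)$-MMP with scaling; since $K_X+\Delta$ is nef, no flips or divisorial contractions that change the numerical class occur, but the point is rather to feed semi-ampleness on the boundary into a base-point-free-type statement on the whole space.

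The key mechanism is the following: semi-ampleness of $(K_X+\Delta)|_{\llcorner\Delta\lrcorner}$ together with a vanishing/injectivity theorem should allow one to lift sections, reducing the problem to showing that the restriction map $H^0(X,\mathcal{O}_X(m(K_X+\Delta)))\to H^0(S,\mathcal{O}_S(m(K_X+\Delta)))$ is surjective for $m$ sufficiently divisible. This is where \cite[Theorem 1.1]{fujino-bpf} (cited in the paragraph introducing Theorem \ref{thm-a}) enters: it is a base-point-free theorem tailored to lc pairs, and combined with the semi-ampleness already established on the non-klt locus, it upgrades nef-and-abundant on $X$ to semi-ample. Concretely, I would verify that $\kappa_\sigma(X,K_X+\Delta)=\nu(X,K_X+\Delta)$ equals the Iitaka-type dimension (this is the abundance equality encoded in the hypothesis), and apply the lc base-point-free theorem using the semi-ample boundary restriction as the input that handles the non-klt locus.

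The hard part will be the inductive descent of semi-ampleness through the non-normal slc structure on $S=\llcorner\Delta\lrcorner$: even though Theorem \ref{main-thm2} reduces semi-ampleness for the slc pair to its normalization, one must check carefully that log abundance of $(K_X+\Delta)|_S$ with respect to $(S,\Delta_S)$ holds on every lc center and is compatible with the gluing data, so that the inductive hypothesis genuinely applies. I expect the main obstacle to be precisely the bookkeeping of lc centers under adjunction and the verification that the finiteness of log pluricanonical representations (Theorem \ref{main-thm}) is exactly what guarantees the glued sections on $S$ are $\Bir$-invariant and hence descend; this is the role Theorem \ref{main-thm} plays, as the authors emphasize. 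The lifting step itself, via the cohomology injectivity theorem underlying \cite{fujino-bpf}, I expect to be relatively formal once the boundary input is in place.
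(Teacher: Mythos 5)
Your proposal follows essentially the same route as the paper's proof: a dlt blow-up (Theorem \ref{dltblowup}), induction on dimension via adjunction to $S=\llcorner \Delta\lrcorner$ with Proposition \ref{prop3} (i.e.\ Theorem \ref{main-thm2}) descending semi-ampleness from the normalization to the sdlt boundary, and then the Fukuda-type base point free theorem \cite[Theorem 1.1]{fujino-bpf} to lift semi-ampleness from $S$ to $X$. The only superfluous element is the MMP-with-scaling digression, which the paper's argument does not use and which you yourself immediately set aside in favor of the correct mechanism.
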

\begin{proof}
By replacing $(X, \Delta)$ with its dlt blow-up 
(cf.~Theorem \ref{dltblowup}), we can assume that 
$(X, \Delta)$ is dlt and that $K_X+\Delta$ is nef and log abundant. 
We put $S=\llcorner \Delta\lrcorner$. 
Then $(S, \Delta_S)$, where $K_S+\Delta_S=(K_X+\Delta)|_S$, is an sdlt $(n-1)$-fold and 
$K_S+\Delta_S$ is semi-ample by the induction on dimension and Proposition \ref{prop3} below. 
By applying Fukuda's theorem (cf.~\cite[Theorem 1.1]{fujino-bpf}), we obtain that 
$K_X+\Delta$ is semi-ample. 
\end{proof}

We note that Proposition \ref{prop3} is a key result in this paper. 
It heavily depends on Theorem \ref{main-thm}. 

\begin{prop}\label{prop3}
Let $(X, \Delta)$ be a projective slc pair. 
Let $\nu:X^\nu\to X$ be the normalization. 
Assume that $K_{X^\nu}+\Theta=\nu^*(K_X+\Delta)$ is semi-ample. 
Then $K_X+\Delta$ is semi-ample. 
\end{prop}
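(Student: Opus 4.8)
The plan is to descend semi-ampleness from the normalization $X^\nu$ to $X$ by analyzing the gluing data along the conductor, and to use Theorem \ref{main-thm} to control the finitely many automorphisms that arise in the gluing. First I would recall that $X$ is obtained from its normalization $X^\nu$ by gluing along the conductor: writing $K_{X^\nu}+\Theta=\nu^*(K_X+\Delta)$, the divisor $\Theta^{=1}$ is the preimage of the conductor, and there is an involution (more precisely a finite collection of identifications) on the normalization of $\Theta^{=1}$ that recovers $X$ from $X^\nu$ as a quotient. Since $K_{X^\nu}+\Theta$ is assumed semi-ample, for a sufficiently divisible $m$ the linear system $|m(K_{X^\nu}+\Theta)|$ is base-point free and defines a morphism; the key point is that $m(K_X+\Delta)$ is semi-ample if and only if the sections of $m(K_{X^\nu}+\Theta)$ that descend to $X$ separate points and define a morphism, which amounts to showing that $\nu^* H^0(X,\mathcal O_X(m(K_X+\Delta)))$ is exactly the subspace of $H^0(X^\nu,\mathcal O_{X^\nu}(m(K_{X^\nu}+\Theta)))$ invariant under the gluing identifications.

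Next I would make the descent criterion explicit. A section $s\in H^0(X^\nu, m(K_{X^\nu}+\Theta))$ descends to $X$ precisely when its restrictions to the two branches meeting along each component of the conductor agree under the gluing involution. The gluing involution, restricted to the relevant lc centers (the components of $\Theta^{=1}$ and their intersections), is exactly a $B$-birational self-map of the type controlled by the log pluricanonical representation $\rho_m$ in Remark \ref{rem215} and Lemma \ref{newlem}. By Theorem \ref{main-thm} applied to these lower-dimensional lc centers (which are projective lc pairs with semi-ample log canonical divisor, by restriction and adjunction), the group generated by these gluing identifications acts through a \emph{finite} group on the relevant spaces of sections. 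Therefore the invariant subspace is cut out by finitely many linear conditions, and after replacing $m$ by a suitable multiple one can arrange that enough invariant sections exist.

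The crucial step is then to show that these invariant sections are base-point free on $X$. I would argue as follows: the morphism $X^\nu\to Z$ defined by $|m(K_{X^\nu}+\Theta)|$ is compatible with the gluing because the gluing identifications preserve the log canonical divisor (they are $B$-birational), hence they are compatible with the contraction; consequently the morphism descends to a morphism $X\to Z$, and $K_X+\Delta$ is the pullback of an ample divisor on (the image in) $Z$. The finiteness from Theorem \ref{main-thm} is exactly what guarantees that the gluing is compatible with the fibration structure after passing to a uniform multiple of $m$, so that the descended sections define a genuine morphism rather than merely a rational map.

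\textbf{Main obstacle.} The hard part will be the bookkeeping of the gluing data: unlike the normal case, the conductor and its self-gluing can be quite intricate (components may be glued to themselves, and intersections of lc centers must be tracked inductively), so making precise the statement ``invariant sections descend and are base-point free'' requires carefully matching the restriction maps of Remark \ref{rem215} across all strata and checking compatibility with the semi-ample fibration on $X^\nu$. I expect the genuinely new input to be the finiteness of $\rho_m(\Bir)$ from Theorem \ref{main-thm}, which converts the a priori infinite-dimensional gluing-compatibility problem into a finite one; everything else should reduce to standard descent for semi-ampleness once the gluing involution is identified with an element of the relevant $B$-birational representation.
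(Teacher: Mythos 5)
Your overall strategy --- descend sections along the conductor, identify the gluing identifications with $B$-birational maps of the strata, and use the finiteness of the log pluricanonical representation (Theorem \ref{main-thm}) to produce enough descending sections --- is the same architecture as the proof the paper actually invokes: the paper's proof is a reduction to the machinery of admissible and pre-admissible sections in \cite[Section 4]{fujino-abundance}, with Theorem \ref{main-thm} supplying the finiteness that was previously known only in dimension $\leq 2$, and with \ref{27} guaranteeing that the results of \cite[Section 2]{fujino-abundance} hold in every dimension. So you have found the right route, but two of your steps have genuine gaps.

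First, the claim in your third paragraph that the morphism $f:X^\nu\to Z$ ``descends to a morphism $X\to Z$'' because the gluing identifications preserve $K_{X^\nu}+\Theta$ is false as stated: the involution $\tau$ on the normalization of the double locus induces in general a nontrivial (finite-order, by Theorem \ref{main-thm}) automorphism of its image in $Z$, so $f\circ\tau\neq f$ and the fibration does not pass to the quotient directly; what must be proved instead is that for sufficiently divisible $m$ the space of sections of $m(K_{X^\nu}+\Theta)$ compatible with the gluing is base-point free, and that is where the real work lies. Second --- and this is the substantive missing input --- base-point freeness of the compatible sections at points of the double locus requires \emph{extending} admissible sections from the double locus to each component of $X^\nu$, i.e., the surjectivity of the restriction maps from $H^0$ of each component to $H^0$ of its conductor divisor; this is not ``standard descent'' or bookkeeping but rests on vanishing theorems on dlt models and the minimal model program results of \cite[Section 2]{fujino-abundance} (available in all dimensions only via \ref{27}). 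Relatedly, invariance under ``the group generated by the gluing identifications'' alone does not close the induction: the extensions are chosen independently on the various components, and to make them agree along deeper strata one needs sections invariant under \emph{all} $B$-birational maps of each stratum (Fujino's admissible sections), which is exactly why the full finiteness statement of Theorem \ref{main-thm} for every lc center --- applicable because semi-ampleness restricts to the strata --- is needed, together with an induction on dimension in which the double locus is itself a lower-dimensional slc pair with semi-ample normalized log canonical divisor.
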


\begin{proof}
The arguments in \cite[Section 4]{fujino-abundance} 
work by Theorem \ref{main-thm}. 
As we pointed out in \ref{27}, we can freely 
use the results in \cite[Section 2]{fujino-abundance}. 
The finiteness of $B$-pluricanonical 
representations, which was only proved in dimension $\leq 2$ in 
\cite[Section 3]{fujino-abundance}, is now Theorem \ref{main-thm}. 
Therefore, the results in \cite[Section 4]{fujino-abundance} 
hold in any dimension. 
\end{proof}

By combining Proposition \ref{prop3} with 
Theorem \ref{thm1}, we obtain an obvious 
corollary (see also Corollary \ref{cor412}, 
Theorem \ref{thm415}, and Remark \ref{rem416}). 

\begin{cor}\label{cor44} 
Let $(X, \Delta)$ be a projective slc pair and 
let $\nu:X^\nu\to X$ be 
the normalization. 
If $K_{X^\nu}+\Theta=\nu^*(K_X+\Delta)$ is nef and 
log abundant, then $K_X+\Delta$ is semi-ample. 
\end{cor}

We give one more corollary of Proposition \ref{prop3}. 
\begin{cor}\label{45cor}
Let $(X, \Delta)$ be a projective 
slc pair such that 
$K_X+\Delta$ is nef. 
Let $\nu:X^{\nu}\to X$ be the 
normalization. Assume that 
$X^\nu$ is a toric variety. 
Then $K_X+\Delta$ is semi-ample. 
\end{cor}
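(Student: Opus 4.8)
The plan is to push the whole question onto the normalization and then invoke a classical fact from toric geometry. By Proposition \ref{prop3} it suffices to show that $K_{X^\nu}+\Theta=\nu^*(K_X+\Delta)$ is semi-ample, for then $K_X+\Delta$ is semi-ample as well. Since $X^\nu=\amalg_i X_i^\nu$ is a disjoint union of projective toric varieties and $(X^\nu,\Theta)$ is lc by the definition of slc pair, the problem is reduced to a statement about nef divisors on normal projective toric varieties.

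First I would verify that $K_{X^\nu}+\Theta$ is nef. Because $\nu$ is a finite surjective morphism and $K_X+\Delta$ is nef by hypothesis, its pullback $\nu^*(K_X+\Delta)$ is again nef: nefness is preserved under pullback by finite surjective morphisms, as one sees by testing against curves via the projection formula. Hence $K_{X^\nu}+\Theta$ is a nef $\mathbb Q$-Cartier $\mathbb Q$-divisor on each component $X_i^\nu$.

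The key step is then the toric input: on a projective toric variety every nef line bundle is semi-ample, indeed globally generated, so that the nef cone and the semi-ample cone coincide. Applying this componentwise, a sufficiently divisible multiple of $K_{X^\nu}+\Theta$ is Cartier and basepoint free on each $X_i^\nu$, which is exactly the semi-ampleness of $K_{X^\nu}+\Theta$ on $X^\nu$. Feeding this back into Proposition \ref{prop3} converts semi-ampleness on the normalization into semi-ampleness of $K_X+\Delta$ on $X$, completing the argument.

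I do not expect a genuine obstacle here. The only nontrivial geometric ingredient is the classical equivalence of nefness and semi-ampleness on projective toric varieties, and the substantial work of descending from $X^\nu$ to $X$ has already been packaged into Proposition \ref{prop3} (hence ultimately into Theorem \ref{main-thm}); the remaining verifications, namely that $(X^\nu,\Theta)$ is lc and that $\nu^*(K_X+\Delta)$ is nef, are routine.
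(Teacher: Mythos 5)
Your proposal is correct and follows essentially the same route as the paper: the paper's proof likewise cites the classical fact that every nef $\mathbb Q$-Cartier $\mathbb Q$-divisor on a projective toric variety is semi-ample and then concludes immediately by Proposition \ref{prop3}. Your additional verifications (nefness of $\nu^*(K_X+\Delta)$ via the projection formula and the componentwise treatment of $X^\nu=\amalg_i X_i^\nu$) are exactly the routine details the paper leaves implicit.
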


\begin{proof}
It is well known that every nef $\mathbb Q$-Cartier $\mathbb Q$-divisor 
on a projective toric variety is semi-ample. 
Therefore, this corollary is obvious by Proposition \ref{prop3}. 
\end{proof}
\begin{thm}\label{thm2}
Let $(X, \Delta)$ be a projective $n$-dimensional 
lc pair. 
Assume that 
the abundance conjecture holds for 
projective dlt pairs in dimension $\leq n-1$. 
Then $K_X+\Delta$ is semi-ample if and only if 
$K_X+\Delta$ is nef and abundant. 
\end{thm}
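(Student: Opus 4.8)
The plan is to dispose of the easy implication first and then concentrate on the converse. If $K_X+\Delta$ is semi-ample, a sufficiently divisible multiple defines a morphism $f:X\to Z$ with $m(K_X+\Delta)\sim f^*A$ for some ample divisor $A$; then $K_X+\Delta$ is clearly nef, and its numerical dimension equals $\dim f(X)=\kappa(X,K_X+\Delta)$, so it is abundant. The substance of the theorem is therefore the reverse direction, which I would prove by reducing ``nef and abundant'' to ``nef and log abundant'' so that Theorem \ref{thm1} applies.

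So assume $K_X+\Delta$ is nef and abundant. First I would pass to a dlt blow-up $\varphi:(Y,\Gamma)\to(X,\Delta)$ as in Theorem \ref{dltblowup}, so that $(Y,\Gamma)$ is $\mathbb{Q}$-factorial dlt and $K_Y+\Gamma=\varphi^*(K_X+\Delta)$. Since $\varphi_*\mathcal O_Y=\mathcal O_X$, the sections of multiples of $K_X+\Delta$ and of $K_Y+\Gamma$ match, so semi-ampleness for one is equivalent to semi-ampleness for the other; likewise nefness and abundance are preserved under this crepant pullback. Hence it suffices to treat the dlt pair, which I rename $(X,\Delta)$.

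The core step is to verify the log abundance condition along every lc center. Each lc center $W$ of the dlt pair $(X,\Delta)$ has $\dim W\leq n-1$ and is normal (lc centers of dlt pairs are normal, so the normalization $\nu_W$ is an isomorphism), and by dlt adjunction it carries a dlt structure $(W,\Delta_W)$ with $K_W+\Delta_W=(K_X+\Delta)|_W$. As the restriction of a nef divisor this is nef, so the assumed abundance for projective dlt pairs in dimension $\leq n-1$ gives that $K_W+\Delta_W$ is semi-ample, hence in particular abundant. Combining this with the given global abundance of $K_X+\Delta$ on $X$ itself, I conclude that $K_X+\Delta$ is nef and log abundant with respect to $(X,\Delta)$. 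Applying Theorem \ref{thm1} then yields that $K_X+\Delta$ is semi-ample.

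The hard part is precisely this upgrade from abundance to \emph{log} abundance: it rests on the facts that the lc centers of a dlt pair are normal and inherit an adjunction-compatible dlt structure, and that each such center is strictly lower-dimensional, so that the dimensional hypothesis can be fed in. Once these structural inputs are in place, the remaining implication is supplied cleanly by Theorem \ref{thm1}, which is itself the deep ingredient since it depends on Proposition \ref{prop3} and hence on the finiteness of log pluricanonical representations (Theorem \ref{main-thm}).
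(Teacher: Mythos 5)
Your proposal is correct and follows the paper's proof essentially verbatim: dlt blow-up via Theorem \ref{dltblowup}, then upgrade ``nef and abundant'' to ``nef and log abundant'' using the dimension $\leq n-1$ abundance hypothesis on the lc centers, then conclude by Theorem \ref{thm1}. The only difference is that you spell out the adjunction step (normality of lc centers of dlt pairs and the induced dlt structures) that the paper compresses into ``it is easy to see,'' which is a faithful and accurate expansion.
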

\begin{proof}
It is obvious that $K_X+\Delta$ is nef and abundant 
if $K_X+\Delta$ is semi-ample. 
So, we show that 
$K_X+\Delta$ is semi-ample 
under the assumption that $K_X+\Delta$ is nef and 
abundant. 
By taking a dlt blow-up 
(cf.~Theorem \ref{dltblowup}),  
we can assume that $(X, \Delta)$ is dlt. 
By the assumption, it is easy to see 
that $K_X+\Delta$ is nef and 
log abundant. 
Therefore, by Theorem \ref{thm1}, 
we obtain that $K_X+\Delta$ is semi-ample. 
\end{proof}

The following theorem is an easy consequence of 
the arguments in \cite[Section 7]{kemamc} and 
Proposition \ref{prop3} by the induction on dimension. 
We will treat related topics in Section \ref{sec5} more 
systematically.  

\begin{thm}\label{thm4}
Let $(X, \Delta)$ be a projective $\mathbb Q$-factorial 
dlt $n$-fold such 
that $K_X+\Delta$ is nef. 
Assume that the abundance conjecture 
for projective $\mathbb Q$-factorial 
klt pairs in dimension $\leq n$. 
We further assume that 
the minimal model program with ample scaling terminates 
for projective $\mathbb Q$-factorial klt pairs in dimension $\leq n$. 
Then $K_X+\Delta$ is semi-ample. 
\end{thm}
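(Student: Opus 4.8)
The plan is to argue by induction on $n$, reducing the abundance of $K_X+\Delta$ to the assumed abundance for klt pairs together with the descent of semi-ampleness furnished by Proposition \ref{prop3}. The pair is already $\mathbb Q$-factorial dlt, and both standing hypotheses—abundance and termination of the minimal model program with ample scaling for projective $\mathbb Q$-factorial klt pairs—hold in every dimension $\leq n$, hence in particular in dimension $\leq n-1$. So the theorem itself holds in dimension $\leq n-1$ by the inductive hypothesis; after a dlt blow-up (Theorem \ref{dltblowup}), which is a crepant pullback and therefore does not affect semi-ampleness of the log canonical divisor, this yields the abundance conjecture for all projective dlt pairs in dimension $\leq n-1$. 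If $\llcorner\Delta\lrcorner=0$ then $(X,\Delta)$ is klt and we are done by the assumed klt abundance, so from now on I assume $S:=\llcorner\Delta\lrcorner\neq 0$.

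The first substantial step is to show that $(K_X+\Delta)|_S$ is semi-ample. Writing $K_S+\Delta_S=(K_X+\Delta)|_S$, the pair $(S,\Delta_S)$ is an sdlt $(n-1)$-fold, so its normalization is a disjoint union of projective dlt pairs of dimension $\leq n-1$ whose log canonical divisors are semi-ample by the inductive hypothesis. Proposition \ref{prop3} then descends semi-ampleness from the normalization to $S$, giving that $(K_X+\Delta)|_S$ is semi-ample. Since every lc center $W$ of the dlt pair $(X,\Delta)$ lies in $S$, the restriction $(K_X+\Delta)|_W$, and hence its pullback to the normalization $W^\nu$, is semi-ample and therefore nef and abundant. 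Thus the only remaining point is to prove that $K_X+\Delta$ is abundant on $X$ itself: granting this, $(X,\Delta)$ will be nef and log abundant and Theorem \ref{thm1} applies, while alternatively Theorem \ref{thm2} (available since dlt abundance now holds in dimension $\leq n-1$) reduces us to exactly the same point.

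The heart of the proof, and the step I expect to be the main obstacle, is therefore to establish that the nef divisor $K_X+\Delta$ is abundant, that is, that its numerical dimension equals its Iitaka (log Kodaira) dimension. Here I would follow the arguments of \cite[Section 7]{kemamc}, which reduce the abundance of a nef dlt log canonical divisor to three ingredients: (i) abundance for klt pairs, (ii) termination of the minimal model program with ample scaling, and (iii) the semi-ampleness of the restriction to the reduced boundary $S$ together with the finiteness of log pluricanonical representations. The mechanism is to pass from $(X,\Delta)$ to klt perturbations obtained by lowering the coefficients of $S$ and to run the minimal model program with ample scaling, which terminates by hypothesis, so that the assumed klt abundance computes the relevant invariants on a suitable model; the equality of numerical and Iitaka dimensions is then transported back to $K_X+\Delta$ using the semi-ampleness of $(K_X+\Delta)|_S$ obtained above.

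The genuinely delicate feature is that the klt hypothesis controls only the open klt locus $X\setminus S$, so the presence of a nontrivial reduced boundary must be bridged across the minimal model program; this is precisely where ingredient (iii) enters. That ingredient was the obstruction to making \cite[Section 7]{kemamc} unconditional in higher dimensions, and it is now supplied in arbitrary dimension by Proposition \ref{prop3} and Theorem \ref{main-thm}. Once the abundance of $K_X+\Delta$ is established in this way, combining it with the log abundance along the lc centers from the second step and invoking Theorem \ref{thm1} (or Theorem \ref{thm2}) completes the induction and hence the proof.
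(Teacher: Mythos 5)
Your proposal is correct and takes essentially the same route as the paper: induction on dimension together with Proposition \ref{prop3} to obtain that $(K_X+\Delta)|_{\llcorner \Delta\lrcorner}$ is semi-ample, and then the minimal model program on $K_X+\Delta-\varepsilon \llcorner \Delta\lrcorner$ with ample scaling, deferring the endgame to \cite[Section 7]{kemamc} exactly as the paper does. The only ingredient the paper makes explicit that you leave implicit in your ``transport back'' step is \cite[Proposition 3.2]{birkar}, which guarantees that for $0<\varepsilon\ll 1$ every extremal ray contracted in this minimal model program is $(K_X+\Delta)$-numerically trivial, so that nefness, the relevant dimensions, and the semi-ampleness on the boundary are preserved along the way.
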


\begin{proof}
This follows from the arguments in \cite[Section 7]{kemamc} 
by using the minimal model program with ample scaling 
with the aid of Proposition \ref{prop3}. 
Let $H$ be a general effective sufficiently ample Cartier divisor 
on $X$. We run the minimal model program on 
$K_X+\Delta-\varepsilon \llcorner \Delta 
\lrcorner$ with scaling of $H$. 
We note that $K_X+\Delta$ is 
numerically trivial 
on the extremal ray in each step of the above minimal model program 
if $\varepsilon$ is sufficiently small by \cite[Proposition 3.2]{birkar}. 
We also note that, by the induction on dimension, 
$(K_X+\Delta)|_{\llcorner \Delta\lrcorner}$ is semi-ample. 
For the details, see \cite[Section 7]{kemamc}. 
\end{proof}

\begin{rem}
In the proof of Theorem \ref{thm4}, 
the abundance theorem and the 
termination of the minimal model program with 
ample scaling for projective 
$\mathbb Q$-factorial klt pairs in dimension $\leq n-1$ are 
sufficient if $K_X+\Delta-\varepsilon \llcorner \Delta\lrcorner$ is 
not pseudo-effective 
for every $0<\varepsilon \ll 1$ by \cite{bchm} (cf.~\ref{27}). 
\end{rem}

The next theorem is an answer to 
J\'anos Koll\'ar's question 
for {\em{projective}} varieties. He was mainly 
interested in the case where $f$ is birational. 

\begin{thm}\label{thm10}
Let $f:X\to Y$ be a projective morphism between projective varieties. 
Let $(X, \Delta)$ be an lc pair such that $K_X+\Delta$ is 
numerically trivial over $Y$. 
Then $K_X+\Delta$ is $f$-semi-ample. 
\end{thm}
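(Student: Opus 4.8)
The plan is to reduce the relative statement to an absolute one on the projective base $Y$ by adding the pullback of an ample divisor, and then to invoke Theorem \ref{thm1}. Fix a sufficiently ample Cartier divisor $A$ on $Y$ and consider the divisor $K_X+\Delta+f^*A$. I would use the standard equivalence that $K_X+\Delta$ is $f$-semi-ample if and only if $K_X+\Delta+f^*A$ is semi-ample for every sufficiently ample $A$ on $Y$; the nontrivial implication recovers the relative morphism from the absolute one. So it suffices to prove that $K_X+\Delta+f^*A$ is semi-ample. Choosing $A$ general (a general member of a very ample multiple) keeps $(X,\Delta+f^*A)$ lc, so Theorem \ref{thm1} applies once we verify that its log canonical divisor $K_X+\Delta+f^*A$ is nef and log abundant with respect to $(X,\Delta+f^*A)$.

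Nefness is immediate: since $K_X+\Delta\equiv_Y 0$, for $A$ sufficiently ample the class $K_X+\Delta+f^*A$ is nef, because a curve $C$ contracted by $f$ satisfies $(K_X+\Delta)\cdot C=0$, while on a non-contracted curve the term $f^*A\cdot C$ dominates. The heart of the argument is abundance. Note that $K_X+\Delta+f^*A\equiv_Y 0$ as well, so its restriction to a general fiber is numerically trivial, whence $\nu(X,K_X+\Delta+f^*A)\le \dim f(X)$. Conversely, for a general closed point $y\in Y$ the fiber $(X_y,\Delta_{X_y})$ is a projective lc pair with $K_{X_y}+\Delta_{X_y}\equiv 0$, so $K_{X_y}+\Delta_{X_y}\sim_{\mathbb Q}0$ by the abundance theorem for numerically trivial log canonical pairs (cf.~\cite{gongyo-aban}); hence $f_*\mathcal O_X(m(K_X+\Delta))\ne 0$ for suitable divisible $m$. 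Twisting by $\mathcal O_Y(mA)$ and using the ampleness of $A$ then gives $\kappa(X,K_X+\Delta+f^*A)=\dim f(X)$, so $\kappa=\nu$ and the divisor is abundant. The same computation applied to $\nu_W^*((K_X+\Delta+f^*A)|_W)$ for each lc center $W$ of $(X,\Delta+f^*A)$, which is again nef and numerically trivial over $f(W)$, yields log abundance.

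Granting these, Theorem \ref{thm1} shows that $K_X+\Delta+f^*A$ is semi-ample; since this holds for all sufficiently ample $A$, the divisor $K_X+\Delta$ is $f$-semi-ample. For orientation, when $f$ is birational (Koll\'ar's original case) the argument simplifies, since $K_X+\Delta+f^*A$ is then big, hence automatically abundant, and no fiberwise input is required.

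The step I expect to be the main obstacle is the verification of log abundance, and within it the fiberwise input $K_{X_y}+\Delta_{X_y}\sim_{\mathbb Q}0$: establishing $\kappa=\nu$ on $X$ and on every lc center rests on the abundance theorem for numerically trivial log canonical pairs, which is precisely where the finiteness of log pluricanonical representations (Theorem \ref{main-thm}, via Proposition \ref{prop3}) enters. A secondary technical point is to choose $A$ general enough that $(X,\Delta+f^*A)$ stays lc and its lc centers, namely the intersections of the lc centers of $(X,\Delta)$ with $f^*A$, are controlled, so that the abundance computation propagates to all of them.
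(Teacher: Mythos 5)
Your route is genuinely different from the paper's. The paper proves Theorem \ref{thm10} by running its own induction on dimension: after a dlt blow-up (Theorem \ref{dltblowup}) it restricts to $S=\llcorner \Delta\lrcorner$, glues semi-ampleness on the sdlt boundary via Proposition \ref{prop3}, and then applies the base point free theorem of \cite[Theorem 1.1]{fujino-bpf} to $K_X+\Delta+f^*H$, where $H$ is a general ample $\mathbb Q$-divisor pulled back from $Y$ with $\llcorner H\lrcorner=0$; the only fiberwise input is Nakayama's numerically trivial abundance for the klt case. You instead reduce to the absolute statement, Theorem \ref{thm1}, by adding $f^*A$: the equivalence between $f$-semi-ampleness of $K_X+\Delta$ and semi-ampleness of $K_X+\Delta+f^*A$ for sufficiently ample $A$ is standard, and your computation $\kappa(X,K_X+\Delta+f^*A)=\nu(X,K_X+\Delta+f^*A)=\dim f(X)$, using numerically trivial abundance (\cite[Theorem 1.2]{gongyo-aban}) on general fibers and then twisting by $\mathcal O_Y(mA)$, is sound modulo replacing $A$ by a multiple. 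What your detour buys is a shorter argument that uses the already-established Theorem \ref{thm1} as a black box instead of re-running its induction; what it costs is the verification of \emph{log} abundance, which is exactly where your proposal has a gap.

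Concretely: as written, the log abundance step on lc centers is unjustified. For a bare lc pair, an lc center $W$ carries no adjunction structure, so the general fiber of $W^{\nu}\to f(W)$ is not a priori an lc pair whose log canonical divisor is the restriction of $K_X+\Delta+f^*A$; you therefore cannot invoke numerically trivial lc abundance on it. The repair is the paper's opening move: first pass to a dlt blow-up (semi-ampleness over $Y$ descends, since $K_{X'}+\Delta'=\varphi^*(K_X+\Delta)$), so that every lc center is a normal stratum of $\llcorner\Delta\lrcorner$ with $(K_X+\Delta)|_W=K_W+\Delta_W$ dlt, after which \cite[Theorem 1.2]{gongyo-aban} applies to the fibers of $W\to f(W)$. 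Relatedly, you should take the pulled-back divisor with fractional coefficients, $H=\frac{1}{k}\bigl(\text{general member of }|kA|\bigr)$ with $\llcorner H\lrcorner=0$, as the paper does: with your coefficient-one choice the components of $f^*A$ and their intersections with the old strata become \emph{new} lc centers inside $f^{-1}(A)$, for which you would owe a separate abundance argument, whereas with fractional coefficients the lc centers of $(X,\Delta+f^*H)$ coincide with those of $(X,\Delta)$. Finally, nefness of $K_X+\Delta+f^*A$ is not merely ``the ample term dominates''; it requires the bound on lengths of extremal rays (compare the remark on absolute versus relative settings in Section \ref{sec5}, where the paper takes a general member of $|2(2\dim X+1)H|$), though this point is routine. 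With these standard fixes your argument goes through, and, like the paper's, it ultimately rests on Theorem \ref{main-thm} through Proposition \ref{prop3}, which underlies Theorem \ref{thm1}.
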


\begin{proof}
By replacing $(X, \Delta)$ with 
its dlt blow-up 
(cf.~Theorem \ref{dltblowup}), 
we can assume that $(X, \Delta)$ is a $\mathbb Q$-factorial 
dlt pair. 
Let $S=\llcorner \Delta\lrcorner =\cup S_i$ be the irreducible decomposition. 
If $S=0$, then $K_X+\Delta$ is $f$-semi-ample by Kawamata's theorem 
(see \cite[Theorem 1.1]{fujino-kawamata}). 
It is because $(K_{X}+\Delta)|_{X_\eta}\sim _\mathbb Q 0$, 
where $X_\eta$ is the generic fiber of $f$,  by 
Nakayama's abundance theorem for klt pairs with 
numerical trivial log canonical divisor (cf.~\cite[Chapter V. 4.9.~Corollary]{N}).  
By the induction on dimension, 
we can assume 
that $(K_X+\Delta)|_{S_i}$ is semi-ample over $Y$ for 
every $i$.  Let $H$ be a general effective sufficiently ample $\mathbb Q$-Cartier 
$\mathbb Q$-divisor 
on $Y$ such that $\llcorner H\lrcorner=0$. 
Then $(X, \Delta+f^*H)$ is dlt, $(K_X+\Delta+f^*H)|_{S_i}$ is 
semi-ample for every $i$.  
By Proposition \ref{prop3}, $(K_X+\Delta+f^*H)|_S$ is 
semi-ample. By applying \cite[Theorem 1.1]{fujino-bpf}, 
we obtain that $K_X+\Delta+f^*H$ is 
$f$-semi-ample. 
We note that $(K_X+\Delta+f^*H)|_{X_\eta}\sim _{\mathbb Q}0$ 
(see, for example, \cite[Theorem 1.2]{gongyo-aban}). 
Therefore, $K_X+\Delta$ is $f$-semi-ample. 
\end{proof}

\begin{rem} 
In Theorem \ref{thm10}, if $\Delta$ is 
an $\mathbb R$-divisor, then we also obtain 
that $K_X+\Delta$ is semi-ample over $Y$ by the 
same arguments as in \cite[Lemma 6.2]{gongyo-minimal} and \cite[Theorem 3.1]{fujino-gongyo}. 
\end{rem}

As a corollary, we obtain a relative version of the main theorem 
of \cite{gongyo-aban}. 

\begin{cor}[{cf.~\cite[Theorem 1.2]{gongyo-aban}}]\label{ko-slc} 
Let $f:X\to Y$ be a projective morphism 
from a projective slc pair $(X, \Delta)$ to a {\em{(}}not necessarily 
irreducible{\em{)}} projective variety $Y$. 
Assume that $K_X+\Delta$ is numerically trivial over $Y$. 
Then $K_X+\Delta$ is $f$-semi-ample. 
\end{cor}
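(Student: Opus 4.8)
The plan is to reduce Corollary~\ref{ko-slc} to its normalization, exactly as Theorem~\ref{main-thm2} reduces the absolute abundance problem to the lc case, and then apply the absolute statement Theorem~\ref{thm10} fiber by fiber over a suitable open cover of $Y$. First I would pass to the normalization $\nu:X^\nu\to X$ and write $K_{X^\nu}+\Theta=\nu^*(K_X+\Delta)$, so that $(X^\nu,\Theta)$ is lc. Since numerical triviality over $Y$ pulls back along $\nu$, the composite $X^\nu\to X\to Y$ has $K_{X^\nu}+\Theta$ numerically trivial over $Y$. By Theorem~\ref{thm10} applied to the lc pair $(X^\nu,\Theta)$ and the projective morphism $X^\nu\to Y$, we conclude that $K_{X^\nu}+\Theta$ is semi-ample over $Y$.

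Next I would localize on the base to convert relative semi-ampleness into absolute semi-ampleness, so that Proposition~\ref{prop3} becomes applicable. Concretely, choose a general effective sufficiently ample $\mathbb{Q}$-Cartier $\mathbb{Q}$-divisor $H$ on $Y$ with $\llcorner H\lrcorner=0$; then $(X,\Delta+f^*H)$ is slc and its normalization is $(X^\nu,\Theta+(f\circ\nu)^*H)$. Relative semi-ampleness of $K_{X^\nu}+\Theta$ over $Y$ together with ampleness of $H$ gives that $K_{X^\nu}+\Theta+(f\circ\nu)^*H$ is semi-ample in the absolute sense, by the standard argument that a divisor which is relatively semi-ample over $Y$ becomes semi-ample after adding the pullback of a sufficiently ample divisor. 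Here I would invoke the same mechanism used at the end of the proof of Theorem~\ref{thm10}, noting that $(K_{X^\nu}+\Theta+(f\circ\nu)^*H)|_{X^\nu_\eta}\sim_{\mathbb Q}(K_{X^\nu}+\Theta)|_{X^\nu_\eta}\sim_{\mathbb Q}0$ on the generic fiber.

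Now I would apply Proposition~\ref{prop3} to the slc pair $(X,\Delta+f^*H)$: since its normalization has semi-ample log canonical divisor $K_{X^\nu}+\Theta+(f\circ\nu)^*H$, the proposition yields that $K_X+\Delta+f^*H$ is semi-ample on $X$ in the absolute sense. In particular $K_X+\Delta+f^*H$ is $f$-semi-ample. Finally, since $H$ is the pullback of an ample divisor on $Y$, the $f$-semi-ampleness of $K_X+\Delta+f^*H$ is equivalent to the $f$-semi-ampleness of $K_X+\Delta$ itself (the relative base-point-free locus is unchanged by adding $f^*H$), so $K_X+\Delta$ is $f$-semi-ample, as desired.

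The main obstacle I anticipate is the bookkeeping in the localization step: one must verify that $(X,\Delta+f^*H)$ is genuinely slc (not merely slc on the normalization) and that adding $f^*H$ does not create new non-normal-crossing behavior or disturb the hypotheses of Proposition~\ref{prop3}, which requires $K_X+\Delta+f^*H$ to be $\mathbb Q$-Cartier and the pair to satisfy the defining conditions of an slc pair. The generic-fiber triviality $K_X+\Delta\equiv_Y 0$ is exactly what guarantees that $H$ can absorb the relative ampleness uniformly, but I would need to be careful that $Y$ may be reducible, so the construction of $H$ and the semi-ampleness criterion should be applied on each component (or handled via a relatively ample divisor on $Y$) and then glued.
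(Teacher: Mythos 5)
Your proposal is correct and follows essentially the same route as the paper's own proof: normalize and apply Theorem~\ref{thm10} to get $K_{X^\nu}+\Theta$ semi-ample over $Y$, absorb the relative ampleness by adding a general sufficiently ample $\mathbb Q$-divisor $H$ pulled back from $Y$ so that both $K_{X^\nu}+\Theta+\nu^*f^*H$ is (absolutely) semi-ample and $(X,\Delta+f^*H)$ is slc, apply Proposition~\ref{prop3} to conclude $K_X+\Delta+f^*H$ is semi-ample, and finally observe that subtracting $f^*H$ does not affect $f$-semi-ampleness. The technical cautions you raise (slc-ness of $(X,\Delta+f^*H)$, reducibility of $Y$) are handled in the paper simply by choosing $H$ general, exactly as you suggest.
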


\begin{proof} 
Let $\nu:X^\nu\to X$ be the normalization such that 
$K_{X^\nu}+\Theta=\nu^*(K_X+\Delta)$. By Theorem \ref{thm10}, 
$K_{X^\nu}+\Theta$ is semi-ample over $Y$. 
Let $H$ be a general sufficiently ample $\mathbb Q$-divisor on $Y$ such that 
$K_{X^\nu}+\Theta+\nu^*f^*H$ is semi-ample and 
that 
$(X, \Delta+f^*H)$ is slc. 
By Proposition \ref{prop3}, 
$K_X+\Delta+f^*H$ is semi-ample. 
In particular, $K_X+\Delta+f^*H$ is $f$-semi-ample. 
Then $K_X+\Delta$ is $f$-semi-ample. 
\end{proof}

By the same arguments as in the proof of Theorem \ref{thm10} 
(resp.~Corollary \ref{ko-slc}), 
we obtain the following theorem (resp.~corollary), which is a 
relative version of 
Theorem \ref{thm1} (resp.~Corollary \ref{cor44}). 

\begin{thm}\label{thm11}
Let $f:X\to Y$ be a projective 
morphism between projective varieties. 
Let $(X, \Delta)$ be an lc pair such that 
$K_X+\Delta$ is $f$-nef and $f$-log abundant. 
Then $K_X+\Delta$ is $f$-semi-ample.  
\end{thm}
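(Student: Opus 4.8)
The plan is to mimic the proof of Theorem \ref{thm10} almost verbatim, replacing the numerical-triviality input with the nef-and-log-abundant hypothesis and the application of Kawamata's base-point-free theorem with an application of the absolute version of Theorem \ref{thm1}. First I would reduce to the dlt case by taking a dlt blow-up (cf.~Theorem \ref{dltblowup}), so that I may assume $(X,\Delta)$ is $\mathbb{Q}$-factorial dlt with $K_X+\Delta$ being $f$-nef and $f$-log abundant. Writing $S=\llcorner \Delta\lrcorner=\cup S_i$ for the irreducible decomposition of the reduced boundary, I would set up the induction on $\dim X$: by restriction, each $(K_X+\Delta)|_{S_i}$ is again $f$-nef and $f$-log abundant for the induced pair on $S_i$, so by the induction hypothesis $(K_X+\Delta)|_{S_i}$ is semi-ample over $Y$ for every $i$.

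Next I would introduce an auxiliary ample divisor on the base to convert the relative statement into an absolute one, exactly as in Theorem \ref{thm10}. Let $H$ be a general effective sufficiently ample $\mathbb{Q}$-Cartier $\mathbb{Q}$-divisor on $Y$ with $\llcorner H\lrcorner =0$; then $(X,\Delta+f^*H)$ is dlt, $K_X+\Delta+f^*H$ is nef (being the sum of the $f$-nef $K_X+\Delta$ and the pullback of an ample divisor), and one checks that $K_X+\Delta+f^*H$ is \emph{log abundant}: its restriction to each lc center is the restriction of an abundant divisor twisted by the pullback of an ample divisor, which remains nef and abundant. With the semi-ampleness of $(K_X+\Delta+f^*H)|_{S_i}$ for each $i$ in hand, Proposition \ref{prop3} applied to the slc pair $(S,(\Delta+f^*H)|_S)$ yields that $(K_X+\Delta+f^*H)|_S$ is semi-ample. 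Then Fukuda's theorem (cf.~\cite[Theorem 1.1]{fujino-bpf}) promotes this to semi-ampleness of $K_X+\Delta+f^*H$ on all of $X$; this is precisely the mechanism of the proof of Theorem \ref{thm1}.

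Finally I would descend from the twisted absolute statement back to the relative one. Since $K_X+\Delta+f^*H$ is semi-ample and $H$ was chosen ample on $Y$, the divisor $K_X+\Delta+f^*H$ is in particular $f$-semi-ample; subtracting the pullback $f^*H$, which does not affect $f$-semi-ampleness, gives that $K_X+\Delta$ is $f$-semi-ample, as desired. The step I expect to be the main obstacle is verifying that the nef-and-log-abundant property is preserved under the two operations used, namely restriction to the lc centers $S_i$ (so that the induction may be fed) and the addition of $f^*H$ (so that the absolute version of Theorem \ref{thm1} applies). Both hinge on the compatibility of abundance with restriction to lc centers and with twisting by pullbacks of ample classes, which must be checked against the definition of nef and log abundant divisors given above rather than assumed; once these bookkeeping checks are in place, the argument is a formal combination of Proposition \ref{prop3}, Theorem \ref{thm1}, and the base-change trick of Theorem \ref{thm10}.
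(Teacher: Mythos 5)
Your proposal is correct and follows essentially the paper's own route: the paper proves Theorem \ref{thm11} precisely ``by the same arguments as in the proof of Theorem \ref{thm10}'', namely dlt blow-up (Theorem \ref{dltblowup}), induction on dimension applied to the components $S_i$ of $\llcorner \Delta \lrcorner$ (legitimate, since by adjunction the lc centers of $(S_i,\Delta_{S_i})$ are lc centers of $(X,\Delta)$, so $f$-nef and $f$-log abundant does restrict), twisting by $f^*H$ for a general sufficiently ample $\mathbb Q$-divisor $H$ on $Y$ so that each $(K_X+\Delta+f^*H)|_{S_i}$ becomes honestly semi-ample and Proposition \ref{prop3} (an absolute statement about projective slc pairs) can be applied to $S$, then the base point free theorem \cite[Theorem 1.1]{fujino-bpf}, and finally untwisting by $f^*H$, which is harmless by the projection formula. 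The one place you diverge is the penultimate step: you insist on upgrading $K_X+\Delta+f^*H$ to an \emph{absolutely} nef and log abundant divisor so as to invoke the absolute Theorem \ref{thm1}, which saddles you with the auxiliary claim---rightly flagged by you as the main obstacle---that an $f$-nef, $f$-log abundant divisor becomes nef and log abundant after adding the pullback of a sufficiently ample divisor. That claim is true (nefness follows from the boundedness of the length of extremal rays, and abundance from the standard comparisons $\kappa(X, D+f^*H)\geq \kappa(X_\eta, D|_{X_\eta})+\dim Y$ and $\nu(X, D+f^*H)\leq \nu(X_\eta, D|_{X_\eta})+\dim Y$ for the nef divisor $D+f^*H$, applied to $X$ and to the normalized lc centers), but it is unnecessary: \cite[Theorem 1.1]{fujino-bpf} is available in its relative form over $Y$, where the needed hypotheses---$f$-nefness and $f$-log abundance of $K_X+\Delta+f^*H$, together with semi-ampleness of its restriction to the non-klt locus supplied by Proposition \ref{prop3}---hold automatically because $f^*H$ is numerically trivial over $Y$. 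This is exactly how the proof of Theorem \ref{thm10} concludes, yielding $f$-semi-ampleness of $K_X+\Delta+f^*H$ directly, and adopting it eliminates the only unverified step in your write-up.
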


\begin{cor}\label{cor412} 
Let $f:X\to Y$ be 
a projective 
morphism from a projective slc pair $(X, \Delta)$ to a {\em{(}}not necessarily 
irreducible{\em{)}} projective variety $Y$. 
Let $\nu:X^\nu\to X$ be the normalization such that 
$K_{X^\nu}+\Theta=\nu^*(K_X+\Delta)$. 
Assume that $K_{X^\nu}+\Theta$ is nef and log abundant over $Y$. Then 
$K_X+\Delta$ is $f$-semi-ample. 
\end{cor}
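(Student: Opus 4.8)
The plan is to imitate the proof of Corollary \ref{ko-slc}, simply replacing the input Theorem \ref{thm10} by its ``nef and log abundant'' analogue Theorem \ref{thm11}, and then to descend from the normalization back to $X$ by means of Proposition \ref{prop3}. Write $g=f\circ\nu:X^\nu\to Y$. Since $(X,\Delta)$ is slc, the pair $(X^\nu,\Theta)$ is lc, and the hypothesis that $K_{X^\nu}+\Theta$ is nef and log abundant over $Y$ says precisely that $K_{X^\nu}+\Theta$ is $g$-nef and $g$-log abundant with respect to $(X^\nu,\Theta)$. Applying Theorem \ref{thm11} to each connected component of $X^\nu$ (appealing to the fact that these finiteness and semi-ampleness results hold without assuming connectedness, cf.~Remark \ref{rem310}), I would conclude that $K_{X^\nu}+\Theta$ is $g$-semi-ample, that is, semi-ample over $Y$.

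Next, exactly as in Corollary \ref{ko-slc}, I would choose a general sufficiently ample $\mathbb Q$-divisor $H$ on $Y$ so that $K_{X^\nu}+\Theta+\nu^*f^*H=(K_{X^\nu}+\Theta)+g^*H$ is \emph{globally} semi-ample and, at the same time, $(X,\Delta+f^*H)$ is slc. The first condition is available because a relatively semi-ample divisor becomes globally semi-ample after adding the pullback of a sufficiently ample divisor from the base; the second holds since $f^*H$ may be taken to be a general member of a base-point-free linear system, so that adding it preserves the slc property. Observing that $\nu^*(K_X+\Delta+f^*H)=K_{X^\nu}+\Theta+\nu^*f^*H$ is then semi-ample, Proposition \ref{prop3} applied to the slc pair $(X,\Delta+f^*H)$ yields that $K_X+\Delta+f^*H$ is semi-ample, hence in particular $f$-semi-ample.

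Finally I would descend from $K_X+\Delta+f^*H$ to $K_X+\Delta$ itself. Since $f^*H$ is pulled back from $Y$, relative freeness of a suitable multiple is unaffected by subtracting it: global base-point-freeness of $m(K_X+\Delta+f^*H)$ forces its $f$-freeness, and removing the $f$-trivial summand $mf^*H$ shows that $m(K_X+\Delta)$ is $f$-free. Therefore $K_X+\Delta$ is $f$-semi-ample, as desired.

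I expect the only genuinely delicate point to be the first step, namely verifying that Theorem \ref{thm11} really applies to $(X^\nu,\Theta)$, which may be reducible and disconnected, and checking that the phrase ``nef and log abundant over $Y$'' transfers correctly to the $g$-nef and $g$-log abundant hypothesis required there; once this is in place, the relative-to-global semi-ampleness trick together with Proposition \ref{prop3} makes the remaining steps routine, in complete parallel with Corollary \ref{ko-slc}.
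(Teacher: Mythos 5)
Your proof is correct and follows exactly the route the paper intends: Corollary \ref{cor412} is established there ``by the same arguments as in the proof of Corollary \ref{ko-slc}'', with Theorem \ref{thm11} replacing Theorem \ref{thm10}, followed by the addition of $f^*H$, the descent via Proposition \ref{prop3}, and the removal of the $f$-trivial summand $f^*H$. Your explicit treatment of the possibly disconnected $X^\nu$ (via Remark \ref{rem310}) and of the final subtraction step merely spells out details the paper leaves implicit.
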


\subsection{Relative abundance conjecture}\label{subsec41} 
In this subsection, we make some remarks on the relative abundance conjecture. 

After we circulated this paper, Hacon and Xu proved the relative version of Theorem 
\ref{main-thm2} (cf.~Proposition \ref{prop3}) in \cite{hacon-xu-slc}.

\begin{thm}[{cf.~\cite[Theorem 1.4]{hacon-xu-slc}}]\label{thm-hx} 
Let $(X, \Delta)$ be an slc pair, let $f:X\to Y$ be a projective 
morphism onto an algebraic variety $Y$, and let $\nu:X^\nu\to X$ be the normalization with 
$K_{X^\nu}+\Theta=\nu^*(K_X+\Delta)$. 
If $K_{X^\nu}+\Theta$ is semi-ample over $Y$, then 
$K_X+\Delta$ is semi-ample over $Y$. 
\end{thm}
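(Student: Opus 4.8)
The plan is to reduce the relative statement to the absolute case already established in Proposition \ref{prop3} by twisting with the pullback of an ample divisor from the base. Since relative semi-ampleness over $Y$ is local on $Y$, I would first shrink $Y$ to be quasi-projective and then pass to a projective compactification, arranging that $X$, $X^\nu$, and the morphism $f$ extend to the projective setting so that $X$ and $X^\nu$ become projective; this is the step I expect to cause the most trouble (see below).

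Assuming $Y$ is projective, I would choose a general sufficiently ample $\mathbb Q$-divisor $H$ on $Y$ with $\llcorner H\lrcorner=0$. Because $K_{X^\nu}+\Theta$ is semi-ample over $Y$ and $X^\nu$ is projective, the standard fact that a relatively semi-ample divisor becomes absolutely semi-ample after adding a sufficiently ample pullback from the base shows that $K_{X^\nu}+\Theta+\nu^*f^*H$ is semi-ample. For $H$ general we may also assume that $(X, \Delta+f^*H)$ is slc, with normalization $(X^\nu, \Theta+\nu^*f^*H)$. Applying the absolute Proposition \ref{prop3} to this slc pair yields that $K_X+\Delta+f^*H$ is semi-ample, hence in particular $f$-semi-ample. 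Since $f^*H\sim_{\mathbb Q, Y}0$, this forces $K_X+\Delta$ to be $f$-semi-ample, as desired. This is exactly the mechanism used in the proof of Corollary \ref{ko-slc}, now applied with $K_{X^\nu}+\Theta$ merely semi-ample over $Y$ rather than numerically trivial over $Y$.

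The hard part will be the reduction to a projective base $Y$: extending $f:X\to Y$ to a projective morphism over a compactification of $Y$ while preserving the slc structure of $(X, \Delta)$ and the relative semi-ampleness of $K_{X^\nu}+\Theta$ is not automatic, since a careless compactification may create non-slc singularities or destroy semi-ampleness along the boundary. The conceptually cleaner route, which avoids this difficulty and is the one Hacon and Xu take via Koll\'ar's gluing theory, is to work directly over $Y$: the semi-ample fibration on $X^\nu$ produced by $K_{X^\nu}+\Theta$ must be shown to descend across the gluing data that recovers $X$ from $X^\nu$, that is, to be compatible with the involution on the conductor. The compatibility of this involution with the pluricanonical sections is precisely what the finiteness of log pluricanonical representations (Theorem \ref{main-thm}) guarantees, which is why that theorem remains the essential input even in the relative setting.
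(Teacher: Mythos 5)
First, a point of comparison that matters here: the paper does not prove Theorem \ref{thm-hx} at all. It is quoted from Hacon--Xu (\cite[Theorem 1.4]{hacon-xu-slc}), and the paper's only commentary is that the Hacon--Xu proof rests on Koll\'ar's gluing theory together with the finiteness of log pluricanonical representations (Theorem \ref{main-thm}, of which Hacon--Xu prove a slightly weaker, more Hodge-theoretic variant). Your closing paragraph therefore correctly identifies the actual route and the reason Theorem \ref{main-thm} is the essential input: the semi-ample fibration on $X^\nu$ must descend through the gluing data, i.e.\ be compatible with the involution on the conductor, and finiteness of the $B$-pluricanonical representation is what makes the relevant sections invariant. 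But you only gesture at this; you do not carry it out, and the paper gives you nothing to check it against beyond the citation.

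Your primary line of argument has a genuine gap exactly where you suspect it does, and the gap is not a technicality. Shrinking $Y$ is harmless (relative semi-ampleness is local on the base, and finitely many local values of $m$ can be combined), and in the special case where $Y$ is projective --- so that $X$ and $X^\nu$ are projective --- your twist-by-$f^*H$ argument is complete and is literally the mechanism of the paper's Corollary \ref{ko-slc}: $K_{X^\nu}+\Theta+\nu^*f^*H$ is semi-ample for $H$ sufficiently ample, $(X,\Delta+f^*H)$ is slc for $H$ general, Proposition \ref{prop3} applies, and one descends back. The failure is the compactification step for general algebraic $Y$: extending $(X,\Delta)\to Y$ to a projective slc pair over a projective $\overline Y$ while preserving both the slc structure along the boundary and the relative semi-ampleness of the log canonical divisor of the normalization is essentially the \emph{log canonical closure} problem, which is the subject of a separate Hacon--Xu paper (\cite{hacon-xu-lc-closure}) and requires the full minimal model program machinery; it cannot be arranged by a ``careful choice'' of compactification. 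In other words, the reduction you propose is not a shortcut around the gluing theory --- filling its gap already presupposes results of comparable depth, which is presumably why Hacon--Xu work directly over $Y$ with Koll\'ar's gluing theory rather than compactifying. So: your method is correct and matches the paper where $Y$ is projective, but as a proof of the stated theorem it is incomplete, with the missing step being the one you flagged.
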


The proof of \cite[Theorem 1.4]{hacon-xu-slc} in \cite[Section 4]{hacon-xu-slc} depends 
on Koll\'ar's gluing theory (see, for example, \cite{kollar-source}, 
\cite{kollar-book}, and \cite{hacon-xu-lc-closure}) and the finiteness of 
the log pluricanonical representation:~Theorem \ref{main-thm}. 
Note that Hacon and Xu prove a slightly weaker version of Theorem \ref{main-thm} 
in \cite[Section 3]{hacon-xu-slc}, which is 
sufficient for the proof of \cite[Theorem 1.4]{hacon-xu-slc}. 
Their arguments in \cite[Section 3]{hacon-xu-slc} 
are more Hodge theoretic than ours and 
use the finiteness result in the case when the Kodaira 
dimension is zero established in \cite{gongyo-aban}. 
We note that Theorem \ref{thm-hx} implies the relative versions (or generalizations) of 
Theorem \ref{thm1}, Corollary \ref{cor44}, Corollary \ref{45cor}, Theorem \ref{thm2}, Theorem \ref{thm4}, 
Theorem \ref{thm10}, Corollary \ref{ko-slc}, 
Theorem \ref{thm11}, and Corollary \ref{cor412} 
without assuming the projectivity of varieties. 
We leave the details as exercises for the reader. 
 
\subsection{Miscellaneous applications}\label{mis} 
In this subsection, we collect some miscellaneous applications 
related to the base point free theorem and the abundance conjecture. 

The following theorem is the log canonical version of Fukuda's 
result. 

\begin{thm}[{cf.~\cite[Theorem 0.1]{fukuda2}}]\label{thm414}  
Let $(X, \Delta)$ be a projective 
lc pair. 
Assume that $K_X+\Delta$ is numerically equivalent 
to some semi-ample 
$\mathbb Q$-Cartier $\mathbb Q$-divisor 
$D$. 
Then $K_X+\Delta$ is 
semi-ample. 
\end{thm}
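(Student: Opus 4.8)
The plan is to use the morphism defined by the semi-ample divisor $D$ to convert the global statement into a relative one, to which Theorem \ref{thm10} applies, and then to descend semi-ampleness from the base back to $X$. First I would replace $D$ by a sufficiently divisible multiple so that $mD$ is Cartier and globally generated, and let $\phi:X\to Z$ be the associated morphism after Stein factorization, so that $Z$ is a normal projective variety, $\phi$ has connected fibers, and $D\sim_{\mathbb Q}\phi^*A$ for some ample $\mathbb Q$-Cartier $\mathbb Q$-divisor $A$ on $Z$. Since $K_X+\Delta\equiv D=\phi^*A$ by hypothesis, every curve $C$ contracted by $\phi$ satisfies $(K_X+\Delta)\cdot C=0$ by the projection formula; that is, $K_X+\Delta$ is numerically trivial over $Z$.

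Next, because $(X,\Delta)$ is lc and $K_X+\Delta$ is numerically trivial over $Z$, Theorem \ref{thm10} applies and shows that $K_X+\Delta$ is $\phi$-semi-ample. I would then combine $\phi$-semi-ampleness with relative numerical triviality: for suitable $m$ the relative linear system $|m(K_X+\Delta)/Z|$ is free and defines a morphism over $Z$ which, restricted to each fiber of $\phi$, is given by a divisor that is both semi-ample and numerically trivial, hence is constant. Therefore this morphism factors through $Z$, which means $K_X+\Delta\sim_{\mathbb Q,Z}0$; in other words there is a $\mathbb Q$-Cartier $\mathbb Q$-divisor $B$ on $Z$ with $K_X+\Delta\sim_{\mathbb Q}\phi^*B$.

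It remains to identify $B$. From $K_X+\Delta\sim_{\mathbb Q}\phi^*B$ and $K_X+\Delta\equiv\phi^*A$ I obtain $\phi^*(B-A)\equiv 0$ on $X$. Since $\phi$ is a surjective morphism of projective varieties, pullback is injective on numerical classes (intersect $\phi^*(B-A)$ with a curve finite over a given curve in $Z$), so $B\equiv A$. As $A$ is ample and $Z$ is normal and projective, $B$ is ample by Kleiman's criterion, hence $\phi^*B$ is semi-ample. Therefore $K_X+\Delta\sim_{\mathbb Q}\phi^*B$ is semi-ample, as required. I expect the only genuinely hard input to be Theorem \ref{thm10} itself, which rests on Proposition \ref{prop3} and thus on the finiteness Theorem \ref{main-thm}; once relative semi-ampleness is in hand, the remaining steps, namely the fiberwise factorization giving relative $\mathbb Q$-triviality and the numerical descent of ampleness from $X$ to $Z$, are formal and present no real obstacle.
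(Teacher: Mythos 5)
Your proof is correct, but it takes a genuinely different route from the paper's. The paper argues directly: after a dlt blow-up (Theorem \ref{dltblowup}) it uses induction on dimension together with Proposition \ref{prop3} to conclude that $(K_X+\Delta)|_{\llcorner\Delta\lrcorner}$ is semi-ample, and then applies the Fukuda-type base point free theorem \cite[Theorem 1.1]{fujino-bpf}, which handles the hypothesis of numerical equivalence to a semi-ample divisor (details as in \cite[Theorem 6.3]{gongyo-aban}). You instead use the semi-ample fibration $\phi:X\to Z$ of $D$ to reduce to the relative statement, Theorem \ref{thm10}, and then descend: relative semi-ampleness plus relative numerical triviality gives $K_X+\Delta\sim_{\mathbb Q}\phi^*B$ by the standard rigidity argument (this needs $\phi_*\mathcal O_X=\mathcal O_Z$, which your Stein factorization provides), injectivity of $\phi^*$ on numerical classes gives $B\equiv A$, and Kleiman's criterion upgrades $B$ to ample. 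All these descent steps are sound, and there is no circularity: Theorem \ref{thm10} is proved in the paper before Theorem \ref{thm414} and its proof does not invoke it. Note, though, that the two approaches are close cousins rather than independent: the proof of Theorem \ref{thm10} is itself the relative analogue of the paper's direct argument (dlt blow-up, induction, Proposition \ref{prop3}, and \cite[Theorem 1.1]{fujino-bpf}, with the twist of adding a pullback $f^*H$ of an ample divisor), so the hard inputs coincide, ultimately resting on the finiteness Theorem \ref{main-thm}. What your route buys is a formally clean reduction in which the numerical-equivalence hypothesis is absorbed entirely by elementary cone/rigidity arguments on the base, so you never need the version of Fukuda's theorem that deals with numerical equivalence, only the relative semi-ampleness statement; what the paper's route buys is brevity and uniformity, since the same two-line induction scheme is reused across Theorems \ref{thm1}, \ref{thm10}, and \ref{thm414}.
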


\begin{proof}
By taking a dlt blow-up (cf.~Theorem \ref{dltblowup}), we can assume that 
$(X, \Delta)$ is dlt. 
By the induction on dimension and 
Proposition \ref{prop3}, we have that 
$(K_X+\Delta)|_{\llcorner \Delta\lrcorner}$ is 
semi-ample. 
By \cite[Theorem 1.1]{fujino-bpf}, we can prove 
the semi-ampleness of $K_X+\Delta$. 
For the details, see the proof of 
\cite[Theorem 6.3]{gongyo-aban}. 
\end{proof}

By using the deep result in \cite{ckp}, we have a 
slight generalization of Theorem \ref{thm414} and \cite[Corollary 3]{ckp}. 
It is also a generalization of Theorem \ref{thm1}. 

\begin{thm}[{cf.~\cite[Corollary 3]{ckp}}]\label{thm415}
Let $(X, \Delta)$ be a projective lc pair 
and let $D$ be a $\mathbb Q$-Cartier $\mathbb Q$-divisor 
on $X$ such that 
$D$ is nef and log abundant with respect to 
$(X, \Delta)$. 
Assume that $K_X+\Delta\equiv D$. 
Then $K_X+\Delta$ is semi-ample. 
\end{thm}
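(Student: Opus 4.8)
The plan is to reduce Theorem~\ref{thm415} to Theorem~\ref{thm1} by showing that $K_X+\Delta$ itself is nef and log abundant. Nefness is immediate, since $K_X+\Delta\equiv D$ and $D$ is nef. The genuine issue is abundance. Recall that abundance is \emph{not} a numerical condition, because the Iitaka dimension $\kappa$ is not a numerical invariant; so although the numerical dimension of $K_X+\Delta$ equals that of $D$ on $X$ and on every lc center, one cannot deduce abundance of $K_X+\Delta$ from that of $D$ by formal manipulation. This is exactly the point at which the deep result of \cite{ckp} is needed.

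First I would take a dlt blow-up $\varphi\colon (X',\Delta')\to (X,\Delta)$ (cf.~Theorem~\ref{dltblowup}), so that $(X',\Delta')$ is $\mathbb Q$-factorial dlt and $K_{X'}+\Delta'=\varphi^*(K_X+\Delta)$. Then $K_{X'}+\Delta'\equiv\varphi^*D$, and I would check that $\varphi^*D$ is again nef and log abundant with respect to $(X',\Delta')$: nefness and abundance of $\varphi^*D$ on $X'$ follow from those of $D$ because $\varphi$ is birational, and each lc center of $(X',\Delta')$ maps onto an lc center $W$ of $(X,\Delta)$ through a surjective morphism factoring through $W^\nu$, along which nefness and abundance of the pulled-back divisor are preserved (both $\kappa$ and $\nu$ are stable under pullback by surjective morphisms of normal projective varieties). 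The advantage of passing to a dlt model is that every lc center is now a normal stratum of $\llcorner\Delta'\lrcorner$ and the adjunction is clean, namely $(K_{X'}+\Delta')|_W=K_W+\Delta_W$ with $(W,\Delta_W)$ dlt, with no moduli term to contend with.

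With this in place, the main step is to invoke \cite{ckp} on each stratum. For the stratum $X'$ itself, $K_{X'}+\Delta'$ is an honest adjoint divisor numerically equivalent to the nef and abundant divisor $\varphi^*D$, so \cite[Corollary~3]{ckp} gives that $K_{X'}+\Delta'$ is abundant. For each proper lc center $W$, the divisor $K_W+\Delta_W=(K_{X'}+\Delta')|_W$ is an adjoint divisor of the dlt (hence lc) pair $(W,\Delta_W)$ and is numerically equivalent to the nef and abundant divisor $(\varphi^*D)|_W$; applying \cite{ckp} on $W$ shows that $(K_{X'}+\Delta')|_W$ is abundant. As it is also nef, we conclude that $K_{X'}+\Delta'$ is nef and log abundant with respect to $(X',\Delta')$.

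Finally, Theorem~\ref{thm1} applies to $(X',\Delta')$ and yields that $K_{X'}+\Delta'=\varphi^*(K_X+\Delta)$ is semi-ample; since $\varphi$ is birational, $K_X+\Delta$ is semi-ample as well. The hard part is the abundance upgrade of the third paragraph: numerical equivalence to an abundant divisor does not formally imply abundance, and the only input that closes this gap for adjoint bundles is the Hodge-theoretic positivity result of \cite{ckp}. The rest—tracking the correspondence of lc centers and the stability of nefness and abundance through the dlt blow-up—is routine but must be carried out carefully, so that the hypotheses of \cite{ckp} and of Theorem~\ref{thm1} are literally satisfied on every stratum.
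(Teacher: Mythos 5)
Your proposal is correct and takes essentially the same route as the paper: replace $(X,\Delta)$ by a dlt blow-up, use the Campana--Koziarz--P\u{a}un result to upgrade $K_X+\Delta$ from being numerically equivalent to a nef and abundant divisor to being itself nef and abundant, repeat the argument on every lc center to get log abundance, and conclude by Theorem \ref{thm1}. The only difference is bookkeeping: the paper invokes \cite[Corollary 1]{ckp} in the form $\kappa(X,K_X+\Delta)=\kappa(Y,K_Y+\Delta_Y)\geq\kappa(Y,f^*D+F)=\kappa(X,D)$ after passing to a log resolution $f\colon Y\to X$ with $K_Y+\Delta_Y=f^*(K_X+\Delta)+F$, $F$ effective and exceptional, and then compares with $\nu(X,D)=\nu(X,K_X+\Delta)$, rather than quoting \cite[Corollary 3]{ckp} as a black box on each stratum.
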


\begin{proof}
By replacing $(X, \Delta)$ with its dlt blow-up 
(cf.~Theorem \ref{dltblowup}), 
we can assume that $(X, \Delta)$ 
is dlt. 
Let $f:Y\to X$ be a log resolution. 
We put $K_Y+\Delta_Y=f^*(K_X+\Delta)+F$ with 
$\Delta_Y=f_*^{-1}\Delta+\sum E$ where 
$E$ runs through all the $f$-exceptional 
prime divisors on $Y$. 
We note that $F$ is effective and $f$-exceptional. 
By \cite[Corollary 1]{ckp}, 
\begin{align*}
\kappa (X, K_X+\Delta)=\kappa (Y, K_Y+\Delta_Y)\geq 
\kappa (Y, f^*D+F)=\kappa (X, D). 
\end{align*} 
By the assumption, $\kappa (X, D)=\nu (X, D)=\nu (X, K_X+\Delta)$. 
On the other hand, $\nu(X, K_X+\Delta)\geq \kappa (X, K_X+\Delta)$ 
always holds. Therefore, 
$\kappa (X, K_X+\Delta)=\nu (X, K_X+\Delta)$, that is, 
$K_X+\Delta$ is nef and abundant. 
By applying the above argument to every 
lc center of $(X, \Delta)$, we 
obtain that $K_X+\Delta$ is nef and log abundant. 
Thus, by Theorem \ref{thm1}, 
we obtain that $K_X+\Delta$ is semi-ample.  
\end{proof}

\begin{rem}\label{rem416} 
By the proof of Theorem \ref{thm415}, 
we see that we can weaken the assumption 
as follows. 
Let $(X, \Delta)$ be a projective lc pair. 
Assume that $K_X+\Delta$ is numerically equivalent to 
a nef and abundant $\mathbb Q$-Cartier $\mathbb Q$-divisor and 
that $\nu_W^*((K_X+\Delta)|_W)$ is numerically equivalent to 
a nef and abundant $\mathbb Q$-Cartier $\mathbb Q$-divisor 
for every lc center $W$ of $(X, \Delta)$, 
where $\nu_W:W^\nu\to W$ is the normalization of 
$W$. 
Then $K_X+\Delta$ is semi-ample. 
\end{rem}

Theorem \ref{thm-m3} is a 
generalization of \cite[Theorem 1.7]{gongyo-weak}. 
The proof is the same as \cite[Theorem 1.7]{gongyo-weak} 
once we adopt \cite[Theorem 1.1]{fujino-bpf}. 

\begin{thm}[{cf.~\cite[Theorems 6.4, 6.5]{gongyo-aban}}]\label{thm-m3} 
Let $(X, \Delta)$ be a projective lc pair such that 
$-(K_X+\Delta)$ {\em{(}}resp.~$K_X+\Delta${\em{)}} 
is nef and abundant. 
Assume that 
$\dim \Nklt(X, \Delta)\leq 1$ where 
$\Nklt (X, \Delta)$ is the non-klt locus of the pair $(X, \Delta)$. 
Then $-(K_X+\Delta)$ {\em{(}}resp.~$K_X+\Delta${\em{)}} is 
semi-ample. 
\end{thm}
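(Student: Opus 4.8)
The plan is to treat the two sign cases in parallel and, in both, to reduce the global semi-ampleness to semi-ampleness along the low-dimensional non-klt locus, which is then fed into a base-point-free theorem. First I would pass to a dlt model: by Theorem \ref{dltblowup} choose $\varphi\colon Y\to X$ with $K_Y+\Gamma=\varphi^*(K_X+\Delta)$ and $(Y,\Gamma)$ dlt. Since $\varphi_*\mathcal O_Y=\mathcal O_X$, the divisor $\pm(K_X+\Delta)$ is semi-ample if and only if $\pm(K_Y+\Gamma)=\varphi^*(\pm(K_X+\Delta))$ is, and both nefness and abundance are preserved under this $\mathbb Q$-linear pullback. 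One subtlety is that the dlt blow-up extracts divisors $E$ with $a(E,X,\Delta)=-1$, so that $\Nklt(Y,\Gamma)=\llcorner\Gamma\lrcorner$ may have codimension one; however each such $E$ satisfies $\varphi(E)\subseteq\Nklt(X,\Delta)$, so its image still has dimension $\leq 1$, and it is this control on images of lc centers that I will exploit.

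For the case where $K_X+\Delta$ is nef and abundant, I would upgrade ``nef and abundant'' to ``nef and log abundant'' and then invoke Theorem \ref{thm1}. Indeed, for any lc center $W$ of $(X,\Delta)$ we have $\dim W\leq 1$, and $\nu_W^*(K_X+\Delta)|_W$ is a nef divisor on a variety of dimension $\leq 1$. Using that $K_X+\Delta$ is globally abundant (so, after passing to its reduction, it is $\mathbb Q$-linearly equivalent to the pullback of a big divisor from its ample model), one checks that this restriction is either $\mathbb Q$-linearly trivial, when $W$ is contracted, or of positive degree on a curve otherwise; in both cases it is abundant. This is precisely where the hypothesis $\dim\Nklt(X,\Delta)\leq 1$ is essential. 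With log abundance established, Theorem \ref{thm1} gives the semi-ampleness of $K_X+\Delta$ directly. Equivalently, more in the spirit of the low-dimensional base case, I would observe that $(S,\Delta_S)$ with $S=\llcorner\Gamma\lrcorner$ is an sdlt pair of dimension $\leq 1$ whose log canonical divisor is nef, deduce its semi-ampleness from abundance in dimension $\leq 1$ together with Proposition \ref{prop3} (to descend from the normalization to $S$), and then conclude by Fukuda's base-point-free theorem \cite[Theorem 1.1]{fujino-bpf}, exactly as in the proof of Theorem \ref{thm1}.

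The case where $-(K_X+\Delta)$ is nef and abundant is the main obstacle, because the base-point-free machinery behind Theorem \ref{thm1} and \cite[Theorem 1.1]{fujino-bpf} is tailored to $K_X+\Delta$ being nef rather than to $-(K_X+\Delta)$. Here I would follow the argument of \cite[Theorem 1.7]{gongyo-weak}, the only new input being that the restriction of $-(K_X+\Delta)$ to the non-klt locus $S$ is semi-ample; this holds because $S$ has dimension $\leq 1$, where $-(K_S+\Delta_S)$ nef and abundant forces semi-ampleness, and because an appropriate gluing statement in the spirit of Proposition \ref{prop3} lets one pass from the normalization of $S$ to $S$ itself. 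Feeding this boundary semi-ampleness into the weak-Fano-type argument of \cite{gongyo-weak} then yields semi-ampleness of $-(K_X+\Delta)$ on all of $X$.

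I expect the genuine difficulty to lie in two places, both in the anti-canonical case. The first is verifying that the restriction of a globally abundant divisor to each lc center remains abundant once the sign is reversed; the second is checking that the gluing step interacts correctly with the sign change, since Proposition \ref{prop3} is formulated for the log canonical divisor itself. Both are kept under control by the hypothesis $\dim\Nklt(X,\Delta)\leq 1$, which forces every relevant center to be a curve or a point, so that abundance and semi-ampleness there are essentially automatic and the gluing reduces to the well-understood one-dimensional situation.
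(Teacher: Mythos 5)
Your overall architecture agrees with the paper's: prove that the restriction of $\pm(K_X+\Delta)$ to the non-klt locus $T$ is semi-ample, then conclude by a Fukuda-type base point free theorem. Indeed the paper's proof is exactly these two steps, checking semi-ampleness of $\pm(K_X+\Delta)|_T$ by the argument of \cite[Theorem 3.1]{gongyo-weak} and then applying \cite[Theorem 1.1]{fujino-bpf}, which works uniformly for both signs (for $L=-(K_X+\Delta)$ one has $aL-(K_X+\Delta)=(a+1)L$ nef and abundant), so your worry that the base-point-free machinery is tailored to $K_X+\Delta$ rather than $-(K_X+\Delta)$ is unfounded. The genuine gap is in the pivotal step, and it occurs in both of your cases: you assert that since $\pm(K_X+\Delta)$ is nef and abundant and every lc center has dimension $\leq 1$, the restriction to each center is abundant (hence semi-ample there), via the dichotomy ``contracted, hence $\mathbb Q$-trivial, or of positive degree on a curve.'' This dichotomy is false. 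For a nef and abundant divisor $D$, Kawamata's characterization only gives, on a birational model, $f^*D\sim_{\mathbb Q}g^*A$ with $A$ nef and \emph{big}; your parenthetical ``pullback of a big divisor from its ample model'' silently upgrades $A$ to ample, which is equivalent to the semi-ampleness you are trying to prove. With $A$ merely nef and big, a non-contracted curve can have degree zero with non-torsion restriction: in Zariski's example, where $\pi:X\to\mathbb P^2$ is the blow-up at $12$ very general points of a smooth cubic $E$ and $D=\pi^*\mathcal O_{\mathbb P^2}(4)-\sum_{i=1}^{12}E_i$, the divisor $D$ is nef and big (hence abundant), yet $D|_{\widetilde E}$ has degree zero and is non-torsion on the strict transform $\widetilde E$, so $\kappa(\widetilde E,D|_{\widetilde E})=-\infty\ne 0=\nu(\widetilde E,D|_{\widetilde E})$. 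Thus abundance of the ambient divisor does not restrict to curves, and ``nef and abundant in dimension $\leq 1$ forces semi-ampleness'' is precisely the assertion that can fail.

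What saves the theorem---and what the argument of \cite[Theorem 3.1]{gongyo-weak} invoked by the paper actually exploits---is that $T=\Nklt(X,\Delta)$ is not an arbitrary low-dimensional subvariety: by adjunction it carries an induced structure whose (quasi-)log canonical divisor is $(K_X+\Delta)|_T$. On a component where the degree is zero one is then looking at a numerically trivial (semi-)log canonical divisor in dimension $\leq 1$, which is torsion by abundance for slc curves, and Fujino-type vanishing and torsion-freeness results control the gluing over the non-normal points; none of this is available for the restriction of a general nef and abundant divisor, so the pair structure cannot be discarded as your sketch does. Your appeal to ``a gluing statement in the spirit of Proposition \ref{prop3}'' has the right flavor, but that proposition concerns the log canonical divisor of an slc pair and cannot be quoted as is for $\pm(K_X+\Delta)|_T$. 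Finally, a smaller slip in your ``equivalently'' aside: after the dlt blow-up, $S=\llcorner\Gamma\lrcorner$ is a divisor of dimension $n-1$, not $\leq 1$---only its image $\varphi(S)\subset\Nklt(X,\Delta)$ has dimension $\leq 1$---so ``$(S,\Delta_S)$ is an sdlt pair of dimension $\leq 1$'' is wrong as stated, although $(K_Y+\Gamma)|_S$ is pulled back from that low-dimensional image, which is how that aside would have to be repaired.
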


\begin{proof}
Let $T$ be the non-klt locus of $(X, \Delta)$. 
By the same argument as in the proof of \cite[Theorem 3.1]{gongyo-weak}, 
we can check that $-(K_X+\Delta)|_T$ (resp.~$(K_X+\Delta)|_T$) 
is semi-ample. 
Therefore, $-(K_X+\Delta)$ (resp.~$K_X+\Delta$) is 
semi-ample by \cite[Theorem 1.1]{fujino-bpf}. 
\end{proof}

Similarly, we can prove Theorem \ref{thm-m2}. 

\begin{thm}\label{thm-m2}
Let $(X, \Delta)$ be a projective lc pair. 
Assume that $-(K_X+\Delta)$ is nef and abundant 
and that $(K_X+\Delta)|_W\equiv 0$ for every 
lc center $W$ of $(X, \Delta)$. 
Then $-(K_X+\Delta)$ is semi-ample. 
\end{thm}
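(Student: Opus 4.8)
The plan is to follow the strategy of the proof of Theorem \ref{thm-m3}: reduce the semi-ampleness of $-(K_X+\Delta)$ on all of $X$ to the semi-ampleness of its restriction to the non-klt locus $T$, and then lift it back to $X$ by Fukuda's base point free theorem \cite[Theorem 1.1]{fujino-bpf}. The only genuine difference from Theorem \ref{thm-m3} lies in how one establishes semi-ampleness on $T$: there one exploits $\dim \Nklt(X,\Delta)\le 1$, whereas here I would use the hypothesis $(K_X+\Delta)|_W\equiv 0$ on lc centers together with the abundance theorem for numerically trivial slc pairs.

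First I would replace $(X,\Delta)$ by a dlt blow-up $\varphi:X'\to X$ (cf.~Theorem \ref{dltblowup}), so that $(X,\Delta)$ may be assumed dlt and $T:=\Nklt(X,\Delta)=\llcorner\Delta\lrcorner$. Since $\varphi$ is crepant, every lc center of the new pair maps onto an lc center of the original one, and as the pullback of a numerically trivial divisor is numerically trivial, the hypothesis $(K_X+\Delta)|_W\equiv 0$ is preserved for every lc center $W$. In particular, writing $K_T+\Delta_T=(K_X+\Delta)|_T$, the pair $(T,\Delta_T)$ is sdlt, hence slc, and $K_T+\Delta_T\equiv 0$, because its restriction to each irreducible component of $T$, which is an lc center, is numerically trivial.

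Next, I would invoke the abundance theorem for numerically trivial log canonical divisors on slc pairs (Corollary \ref{ko-slc} with $Y$ a point, which itself rests on Theorem \ref{thm10} and Proposition \ref{prop3}) to conclude that $K_T+\Delta_T$ is semi-ample. Being both semi-ample and numerically trivial, $K_T+\Delta_T$ is in fact torsion, so $-(K_T+\Delta_T)=-(K_X+\Delta)|_T$ is semi-ample as well. At the same time, the numerical triviality of $-(K_X+\Delta)|_W$ on each lc center $W$ shows that $D:=-(K_X+\Delta)$, which is nef and abundant by assumption, is actually nef and log abundant with respect to $(X,\Delta)$.

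Finally, with $D=-(K_X+\Delta)$ nef and log abundant and $D|_T$ semi-ample, Fukuda's theorem \cite[Theorem 1.1]{fujino-bpf} yields that $-(K_X+\Delta)$ is semi-ample, exactly as in the last step of the proof of Theorem \ref{thm-m3}. I expect the main obstacle to be the middle step: verifying that $(T,\Delta_T)$ is a genuinely numerically trivial projective slc pair so that Corollary \ref{ko-slc} applies, which requires keeping careful track of the sdlt structure on $T$ and of the compatibility of the restrictions $(K_X+\Delta)|_W$ under the dlt blow-up.
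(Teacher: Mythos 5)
Your proposal is correct and follows essentially the same route as the paper's (much terser) proof: dlt blow-up via Theorem \ref{dltblowup}, semi-ampleness of $(K_X+\Delta)|_{\llcorner\Delta\lrcorner}$ from the abundance theorem for numerically trivial slc pairs (the paper cites \cite[Theorem 1.2]{gongyo-aban}, i.e.\ Corollary \ref{ko-slc} with $Y$ a point, exactly as you do), and then Fukuda's theorem \cite[Theorem 1.1]{fujino-bpf}. The details you supply --- that the hypotheses persist under the crepant blow-up, and that $K_T+\Delta_T$ being semi-ample and numerically trivial is torsion, so $-(K_X+\Delta)|_T$ is semi-ample --- are precisely what the paper leaves implicit.
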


\begin{proof}
By taking a dlt blow-up (cf.~Theorem \ref{dltblowup}), 
we can assume that $(X, \Delta)$ is dlt. 
By \cite[Theorem 1.2]{gongyo-aban} 
(cf.~Corollary \ref{ko-slc}), $(K_X+\Delta)|_{\llcorner \Delta\lrcorner}$ is 
semi-ample. Therefore, $K_X+\Delta$ is semi-ample by 
\cite[Theorem 1.1]{fujino-bpf}. 
\end{proof}

\section{Non-vanishing, abundance, and extension conjectures}\label{sec5}

In this final section, we discuss the relationship 
among various conjectures in the minimal model program. 
Roughly speaking, we prove that the abundance conjecture 
for projective log canonical pairs (cf.~Conjecture \ref{abun}) 
is equivalent to 
the non-vanishing conjecture 
(cf.~Conjecture \ref{nonvan4}) and 
the extension conjecture for projective 
dlt pairs (cf.~Conjecture \ref{c-dlt}). 

First, let us recall the weak non-vanishing conjecture for 
projective lc pairs (cf.~\cite[Conjecture 1.3]{birkar}). 

\begin{conj}[Weak non-vanishing conjecture]\label{nonvan3}
Let $(X, \Delta)$ be a projective 
lc pair such that $\Delta$ is an $\mathbb R$-divisor. 
Assume that $K_X+\Delta$ is pseudo-effective. 
Then there exists an effective $\mathbb R$-divisor $D$ on 
$X$ such that $K_X+\Delta\equiv D$. 
\end{conj}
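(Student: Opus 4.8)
The plan is first to recognize that this is the \emph{numerical} (weak) form of non-vanishing: it asks only for an effective $\mathbb R$-divisor in the numerical class of $K_X+\Delta$, and is therefore implied by the strong non-vanishing Conjecture~\ref{intro-nonvani}, since $\sim_{\mathbb R}$ forces $\equiv$. Rather than proving the stronger statement, I would try to build an effective divisor numerically equivalent to $K_X+\Delta$ directly, reducing step by step to a smooth model and then reading off effectivity from a positivity decomposition.

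First I would normalize the pair. A dlt blow-up (cf.~Theorem~\ref{dltblowup}) lets me assume $(X,\Delta)$ is $\mathbb Q$-factorial dlt; this is a crepant pullback, so both the numerical class of $K_X+\Delta$ and its pseudo-effectivity are preserved, and an effective divisor found upstairs pushes forward to one downstairs. Next, exactly as the paper records just after Conjecture~\ref{intro-nonvani}, the reduction arguments of \cite[Section~8]{dhp} and \cite{g4} --- built on the global ACC and the ACC for log canonical thresholds (cf.~\cite[Conjecture 8.2 and Conjecture 8.4]{dhp}) --- descend the problem to the essential case where $X$ is smooth and $\Delta=0$. Since these reductions are numerical in nature, they apply verbatim to the weaker statement sought here.

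In the smooth case I would pass to Nakayama's divisorial Zariski decomposition $K_X\equiv P_\sigma+N_\sigma$ of the pseudo-effective class, where $N_\sigma$ is effective and $P_\sigma$ is the movable (positive) part. Because $N_\sigma\geq 0$, it suffices to produce an effective divisor numerically equivalent to $P_\sigma$. The aim would then be to run a minimal model program with scaling (cf.~\ref{27}) and extract an effective representative on the output, the point being that a minimal model should make the numerical positivity of $P_\sigma$ visible as genuine sections.

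The hard part --- and the reason this remains a conjecture --- is precisely the passage from a \emph{limiting} condition to an actual divisor. When $K_X+\Delta$ is big ($\kappa_\sigma=\dim X$) effectivity is immediate from Kodaira's lemma, and the numerically trivial case $\kappa_\sigma=0$ is settled by Nakayama's theory (cf.~\cite[Chapter V]{N}); the genuine obstruction is the intermediate range $0<\kappa_\sigma<\dim X$. There, pseudo-effectivity places $K_X+\Delta$ only in the \emph{closure} of the cone of effective classes, and there is no general mechanism, within the minimal model program as it presently stands, that upgrades such a class to a numerically effective representative without new positivity input. I would therefore present the reductions above as the achievable portion of the program and flag the smooth case of intermediate numerical dimension as the essential obstacle.
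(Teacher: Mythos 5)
You were asked about Conjecture \ref{nonvan3}, which the paper does not prove: it is stated as an open problem and serves only as a hypothesis for the conditional results of Section \ref{sec5} (e.g.\ Theorem \ref{lcabun}, Corollary \ref{cor14}, and the termination statements in \ref{27}). Your proposal correctly recognizes this and, rightly, does not claim a proof. What the paper records around the statement matches your account: the reduction, following \cite[Section 8]{dhp} and \cite{g4}, to the case of $X$ smooth and $\Delta=0$, conditional on the global ACC conjecture and the ACC for log canonical thresholds; the equivalence with \cite[Conjecture 1.3]{birkar}; and the relative version (Lemma \ref{lem11}). Your description of the state of the art via Nakayama's $\sigma$-decomposition --- the big case by Kodaira's lemma, $\kappa_\sigma=0$ by \cite{N} (where $K_X\equiv N_\sigma(K_X)\geq 0$ since $P_\sigma\equiv 0$), and the genuinely open intermediate range --- is accurate.

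Two refinements relative to the paper. First, your reduction step ``an effective divisor found upstairs pushes forward to one downstairs'' is not immediate as stated: numerical equivalence does not descend under pushforward by a dlt blow-up $\varphi:Y\to X$, since $\varphi_*D'$ need not be $\mathbb R$-Cartier when $X$ is not $\mathbb Q$-factorial, and even when it is, $\varphi^*\varphi_*D'$ differs from $D'$ by an exceptional term, so $K_Y+\Gamma\equiv D'$ does not directly give $K_X+\Delta\equiv\varphi_*D'$. The paper's Remark \ref{rem10} supplies the fix: by \cite[Theorem 1]{ckp}, $K_Y+\Gamma\equiv D'\geq 0$ already yields $K_Y+\Gamma\sim_{\mathbb R}D''\geq 0$, and $\mathbb R$-linear equivalence \emph{does} push forward, giving $K_X+\Delta\sim_{\mathbb R}\varphi_*D''\geq 0$. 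Second, the same Remark \ref{rem10} shows the implication opposite to the one you note: the weak form implies the strong form (Conjecture \ref{intro-nonvani} = Conjecture \ref{nonvan4}), so the two conjectures are in fact equivalent modulo \cite{ckp}, not merely related by the trivial direction $\sim_{\mathbb R}\Rightarrow\equiv$. With these two points incorporated, your proposal is a faithful account of what is known and of why the statement remains conjectural.
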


Conjecture \ref{nonvan3} is known to be one of the most important 
problems in the minimal model theory (cf.~\cite{birkar}). 

\begin{rem}\label{rem10}
By \cite[Theorem 1]{ckp}, $K_X+\Delta\equiv D\geq 0$ in 
Conjecture \ref{nonvan3} means that 
there is an effective $\mathbb R$-divisor $D'$ such 
that $K_X+\Delta\sim_{\mathbb R}D'$. 
\end{rem}

By Remark \ref{rem10} and Lemma \ref{lem11} below, 
Conjecture \ref{nonvan3} in dimension $\leq n$ is 
equivalent to Conjecture 1.3 of \cite{birkar} 
in dimension $\leq n$ 
with the aid of dlt blow-ups (cf.~Theorem \ref{dltblowup}). 

\begin{lem}\label{lem11} 
Assume that {\em{Conjecture \ref{nonvan3}}} holds in dimension $\leq n$. 
Let $f:X\to Z$ be a projective morphism between quasi-projective varieties 
with $\dim X=n$. 
Let $(X, \Delta)$ be an lc pair such that 
$K_X+\Delta$ is pseudo-effective over $Z$. 
Then there exists an effective $\mathbb R$-Cartier $\mathbb R$-divisor $M$ on $X$ 
such that $K_X+\Delta\sim _{\mathbb R, Z}M$. 
\end{lem}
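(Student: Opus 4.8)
The plan is to deduce this relative non-vanishing statement from the absolute Conjecture \ref{nonvan3} by compactifying and then twisting by the pullback of a sufficiently positive divisor from the base. First I would reduce to the case where both $X$ and $Z$ are projective. Choose a projective compactification $Z\subset \bar Z$ and, using that $f$ is projective, a compactification $\bar f:\bar X\to \bar Z$ of $f$ with $X\subset \bar X$ dense and open. By a standard compactification argument, modifying $\bar X$ only over $\bar X\setminus X$ and taking $\bar\Delta$ to be the closure of $\Delta$ together with a suitable boundary supported on $\bar X\setminus X$, one arranges that $(\bar X,\bar\Delta)$ is a projective lc pair with $K_{\bar X}+\bar\Delta$ being $\mathbb R$-Cartier and restricting to $K_X+\Delta$ over $Z$. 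Since $Z$ is dense in $\bar Z$, the generic fibers of $f$ and $\bar f$ coincide, so $K_{\bar X}+\bar\Delta$ is pseudo-effective over $\bar Z$.

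The crux is then to choose an ample $\mathbb Q$-divisor $A$ on $\bar Z$ such that $K_{\bar X}+\bar\Delta+\bar f^{*}A$ becomes pseudo-effective in the absolute sense. Morally this says that relative pseudo-effectivity can be promoted to absolute pseudo-effectivity after adding the pullback of a sufficiently ample divisor: passing to a resolution of $\bar X$ if necessary, for any movable curve $\gamma$ contracted by $\bar f$ the intersection of $K_{\bar X}+\bar\Delta$ with $\gamma$ is already non-negative, while for $\gamma$ with $\bar f_{*}\gamma\neq 0$ the term $\bar f^{*}A\cdot\gamma=A\cdot \bar f_{*}\gamma$ is strictly positive; a compactness argument on the movable cone then shows that scaling $A$ up makes the total intersection non-negative, i.e. $K_{\bar X}+\bar\Delta+\bar f^{*}A$ pseudo-effective. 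I expect this promotion step, together with the compatibility check that it respects keeping the pair log canonical, to be the main obstacle. Taking $A$ general in addition guarantees that $(\bar X,\bar\Delta+\bar f^{*}A)$ is again log canonical, since $\bar f^{*}A$ is then a general vertical divisor.

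With such an $A$ fixed, $(\bar X,\bar\Delta+\bar f^{*}A)$ is a projective lc pair of dimension $n$ whose log canonical divisor is pseudo-effective, so Conjecture \ref{nonvan3} applies and yields an effective divisor $D$ with $K_{\bar X}+\bar\Delta+\bar f^{*}A\equiv D$. By Remark \ref{rem10} (that is, \cite[Theorem 1]{ckp}) this numerical equivalence upgrades to $\mathbb R$-linear equivalence: there is an effective $\mathbb R$-Cartier divisor $D'$ with $K_{\bar X}+\bar\Delta+\bar f^{*}A\sim_{\mathbb R}D'$.

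Finally I would restrict to the open set $X$. Setting $M:=D'|_{X}\geq 0$ and $B:=-A|_{Z}$, restriction of the $\mathbb R$-linear equivalence (which is preserved under passing to open subsets) gives
\[
K_X+\Delta\sim_{\mathbb R} M+f^{*}B,
\]
with $M$ effective and $\mathbb R$-Cartier and $B$ an $\mathbb R$-Cartier $\mathbb R$-divisor on $Z$. This is exactly $K_X+\Delta\sim_{\mathbb R,Z}M$, completing the proof.
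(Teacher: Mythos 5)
Your proposal diverges from the paper's argument, and the divergence exposes a genuine gap. The paper's proof is short and avoids both of your hard steps: it applies Conjecture \ref{nonvan3} together with Remark \ref{rem10} to the \emph{generic fiber} of $f$ (which has dimension $\leq n$, so the hypothesis covers it), obtaining an effective $\mathbb R$-divisor $\mathbb R$-linearly equivalent to $(K_X+\Delta)|_{X_\eta}$, and then invokes \cite[Lemma 3.2.1]{bchm} to spread this effectivity out over $Z$, i.e.\ to conclude $K_X+\Delta\sim_{\mathbb R,Z}M\geq 0$. That spreading-out lemma works at the level of sections (cohomology and base change: a fixed effective representative on the generic fiber extends after twisting by the pullback of a sufficiently ample divisor on $Z$), so the required ``sufficiently ample'' twist comes with automatic uniformity.

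Your route, by contrast, hinges on the claim that relative pseudo-effectivity of $K_{\bar X}+\bar\Delta$ can be promoted to absolute pseudo-effectivity by adding $\bar f^*A$ for a single sufficiently ample $A$, and your compactness argument on the movable cone does not prove this. On the set of movable classes $\gamma$ normalized by $H^{\dim X-1}\cdot\gamma=1$, the scaling you need is $t(\gamma)=-\bigl((K_{\bar X}+\bar\Delta)\cdot\gamma\bigr)/\bigl(A\cdot \bar f_*\gamma\bigr)$, and near the boundary where $\bar f_*\gamma\to 0$ both the numerator's negative part and the denominator tend to $0$, at rates you do not control; no finite $t$ is produced. (Even the input that vertical \emph{limit} classes pair nonnegatively with $K_{\bar X}+\bar\Delta$ is not established: a movable class with $\bar f_*\gamma=0$ need not be a limit of classes of curves moving inside fibers.) The natural repair --- first produce an effective representative on the generic fiber and then extend sections after an ample twist --- is exactly \cite[Lemma 3.2.1]{bchm}, i.e.\ the paper's proof. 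Separately, your opening reduction is not ``standard'': compactifying $(X,\Delta)$ to a projective \emph{lc} pair restricting to it, modifying only outside $X$, is the existence of log canonical closures (Hacon--Xu, \cite{hacon-xu-lc-closure} in the bibliography), a deep MMP theorem. This step is avoidable --- take any closure, pass to a log resolution with boundary the strict transform plus the reduced exceptional divisor, and use that log canonicity of $(X,\Delta)$ makes the discrepancy correction effective over $X$, then push forward at the end --- but as written you are invoking a major theorem as a triviality while the true load-bearing step (the pseudo-effectivity promotion) remains unproved.
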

\begin{proof}
Apply Conjecture \ref{nonvan3} and Remark \ref{rem10} to 
the generic fiber of $f$. 
Then, by \cite[Lemma 3.2.1]{bchm}, we obtain $M$ with the required 
properties. 
\end{proof}

We give a small remark on Birkar's paper \cite{birkar}. 

\begin{rem}[Absolute versus relative]Let $f:X\to Z$ be 
a projective morphism between {\em{projective}} varieties. 
Let $(X, B)$ be a $\mathbb Q$-factorial 
dlt pair and let $(X, B+C)$ be an lc pair such that 
$C\geq 0$ and that $K_X+B+C$ is nef over $Z$. 
Let $H$ be a very ample Cartier divisor on $Z$. 
Let $D$ be a general member of $|2(2\dim X+1)H|$. 
In this situation, $(X, B+\frac{1}{2}f^*D)$ is dlt, 
$(X, B+\frac{1}{2}f^*D+C)$ is lc, and 
$K_X+B+\frac{1}{2}f^*D+C$ is nef by Kawamata's bound on the 
length of extremal rays. 
The minimal model program on $K_X+B+\frac{1}{2}f^*D$ with 
scaling of $C$ is the minimal model program on $K_X+B$ {\em{over 
$Z$}} with scaling of $C$. By this observation, the arguments in 
\cite{birkar} work without appealing relative settings 
if the considered varieties are {\em{projective}}. 
\end{rem}

\begin{thm}\label{lcabun}
The abundance theorem for projective 
klt pairs in dimension $\leq n$ and 
{\em{Conjecture \ref{nonvan3}}} for projective 
$\mathbb Q$-factorial dlt pairs in dimension $\leq n$  imply 
the abundance theorem for projective lc pairs in dimension  $\leq n$.
\end{thm}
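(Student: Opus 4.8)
The plan is to deduce the statement from Theorem \ref{thm4}, whose two hypotheses—abundance and termination of the minimal model program with ample scaling for projective $\mathbb Q$-factorial klt pairs in dimension $\leq n$—will be supplied directly by the two assumptions of the present theorem. So the proof should be short, amounting to a reduction together with some bookkeeping of hypotheses.

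First I would take a projective lc pair $(X,\Delta)$ with $K_X+\Delta$ nef and $\dim X\leq n$, and pass to a dlt blow-up $\varphi\colon Y\to X$ (Theorem \ref{dltblowup}), so that $(Y,\Gamma)$ is $\mathbb Q$-factorial dlt and $K_Y+\Gamma=\varphi^*(K_X+\Delta)$ is again nef. Since $\varphi$ is proper birational with $\varphi_*\mathcal O_Y=\mathcal O_X$, the semi-ampleness of $K_Y+\Gamma$ descends to that of $K_X+\Delta$; thus it suffices to prove semi-ampleness for the $\mathbb Q$-factorial dlt pair $(Y,\Gamma)$, and Theorem \ref{thm4} is stated in exactly this setting.

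Next I would verify the two hypotheses of Theorem \ref{thm4}. The abundance hypothesis is immediate: the assumed abundance theorem for projective klt pairs in dimension $\leq n$ contains, as a special case, abundance for projective $\mathbb Q$-factorial klt pairs in dimension $\leq n$. For the termination hypothesis I would invoke the discussion in \ref{27}: by \cite[Theorems 1.4, 1.5]{birkar}, the weak non-vanishing conjecture (Conjecture \ref{nonvan3}) for projective $\mathbb Q$-factorial dlt pairs in dimension $\leq n$—which is precisely our second assumption—implies that the minimal model program with ample scaling terminates with a minimal model whenever the log canonical divisor is pseudo-effective, while in the non-pseudo-effective case it always terminates at a Mori fiber space (again \ref{27}). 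In particular, since klt is a special case of dlt, the minimal model program with ample scaling terminates for projective $\mathbb Q$-factorial klt pairs in dimension $\leq n$, which is the second hypothesis of Theorem \ref{thm4}. Applying Theorem \ref{thm4} then gives that $K_Y+\Gamma$ is semi-ample, hence so is $K_X+\Delta$.

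I expect the only genuinely delicate point to be the bookkeeping in the verification of the termination hypothesis: one must apply the weak non-vanishing assumption in the correct generality—for $\mathbb Q$-factorial dlt pairs, so that it also covers the klt pairs produced while running the program on $K_Y+\Gamma-\varepsilon\llcorner\Gamma\lrcorner$—and keep track of both the pseudo-effective and non-pseudo-effective cases of termination, exactly as in \ref{27}. Everything else is routine, and no further obstacle should arise, since the substantive inductive work (the semi-ampleness of $K_Y+\Gamma$ restricted to $\llcorner\Gamma\lrcorner$ and the appeal to Proposition \ref{prop3}, which in turn rests on Theorem \ref{main-thm}) is already packaged inside the proof of Theorem \ref{thm4}.
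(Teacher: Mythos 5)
Your proof is correct and takes essentially the same route as the paper: a dlt blow-up (Theorem \ref{dltblowup}) reduces to the $\mathbb Q$-factorial dlt case, the weak non-vanishing assumption supplies termination of the minimal model program with ample scaling via \ref{27} and \cite[Theorems 1.4, 1.5]{birkar}, and Theorem \ref{thm4} then concludes. Your additional bookkeeping (descent of semi-ampleness along the blow-up, the split into pseudo-effective and non-pseudo-effective cases) only makes explicit what the paper's three-line proof leaves implicit.
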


\begin{proof} 
Let $(X, \Delta)$ be an $n$-dimensional projective lc pair such that 
$K_X+\Delta$ is nef. 
As we explained in \ref{27}, by \cite[Theorems 1.4, 1.5]{birkar}, 
the minimal model 
program with ample scaling terminates for 
projective $\mathbb Q$-factorial klt pairs in dimension $\leq n$. 
Moreover, we can assume that 
$(X, \Delta)$ is a projective $\mathbb Q$-factorial 
dlt pair by taking 
a dlt blow-up 
(cf.~Theorem \ref{dltblowup}). 
Thus, by Theorem \ref{thm4}, we obtain the desired 
result. 
\end{proof}

The following corollary is a result on a generalized abundance conjecture 
formulated by Nakayama's numerical Kodaira dimension $\kappa _{\sigma}$. 

\begin{cor}[Generalized abundance conjecture]\label{cor14} 
Assume that the abundance conjecture 
for projective klt pairs in dimension $\leq n$ and {\em{Conjecture 
\ref{nonvan3}}} for $\mathbb Q$-factorial dlt pairs in dimension $\leq n$. 
Let $(X, \Delta)$ be an $n$-dimensional 
projective lc pair.  
Then $\kappa (X, K_X+\Delta)=\kappa _{\sigma}(X, K_X+\Delta)$. 
\end{cor}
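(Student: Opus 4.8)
The plan is to reduce to the situation where $K_X+\Delta$ is nef on a minimal model, where the two invariants can be compared directly, and then propagate the equality back along the steps of the minimal model program. Since the inequality $\kappa(X, K_X+\Delta)\leq \kappa_\sigma(X, K_X+\Delta)$ always holds for a pseudo-effective divisor (see \cite{N}), and since both sides equal $-\infty$ when $K_X+\Delta$ is not pseudo-effective, I may assume that $K_X+\Delta$ is pseudo-effective; the reverse inequality will then drop out of a chain of equalities. First I would take a dlt blow-up $\varphi\colon Y\to X$ (cf.~Theorem \ref{dltblowup}), so that $K_Y+\Gamma=\varphi^*(K_X+\Delta)$ with $(Y,\Gamma)$ projective $\mathbb Q$-factorial dlt. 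Because $\varphi$ is a proper birational morphism pulling back $K_X+\Delta$ crepantly, both $\kappa$ and $\kappa_\sigma$ are unchanged, so I may replace $(X,\Delta)$ by $(Y,\Gamma)$ and assume from the start that $(X,\Delta)$ is $\mathbb Q$-factorial dlt.

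Next, using the hypothesis that Conjecture \ref{nonvan3} holds for projective $\mathbb Q$-factorial dlt pairs in dimension $\leq n$, I would run the minimal model program on $K_X+\Delta$ with ample scaling as in \ref{27}. By \cite[Theorems 1.4, 1.5]{birkar} this program terminates with a minimal model $(X',\Delta')$ on which $K_{X'}+\Delta'$ is nef. The key point is that neither invariant is disturbed by this process: for $\kappa$ this is the standard fact that a minimal model carries the same log plurigenera (negativity of the contracted and flipped loci), and for $\kappa_\sigma$ one passes to a common resolution $p\colon W\to X$, $q\colon W\to X'$ and writes $p^*(K_X+\Delta)=q^*(K_{X'}+\Delta')+E$ with $E\geq 0$ and $q$-exceptional; since adding an effective $q$-exceptional divisor to a pullback leaves Nakayama's $\kappa_\sigma$ unchanged (cf.~\cite{N}), one gets $\kappa_\sigma(X, K_X+\Delta)=\kappa_\sigma(X', K_{X'}+\Delta')$, and likewise $\kappa(X, K_X+\Delta)=\kappa(X', K_{X'}+\Delta')$.

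On the minimal model the two invariants coincide. Indeed, the two standing assumptions are exactly the hypotheses of Theorem \ref{lcabun} (which runs through Theorem \ref{thm4}), so the abundance theorem holds for projective lc pairs in dimension $\leq n$; applying it to the nef divisor $K_{X'}+\Delta'$ shows that $K_{X'}+\Delta'$ is semi-ample. For a semi-ample divisor one has $\kappa(X', K_{X'}+\Delta')=\nu(X', K_{X'}+\Delta')$, while for the nef divisor $K_{X'}+\Delta'$ we recorded above that $\kappa_\sigma(X', K_{X'}+\Delta')=\nu(X', K_{X'}+\Delta')$. Hence $\kappa(X', K_{X'}+\Delta')=\kappa_\sigma(X', K_{X'}+\Delta')$, and combining with the invariances of the previous step yields $\kappa(X, K_X+\Delta)=\kappa_\sigma(X, K_X+\Delta)$. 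I expect the genuine difficulty to lie not in this formal comparison but in the two imported black boxes on which it rests: the termination of the minimal model program (hence the existence of the minimal model), which is where Conjecture \ref{nonvan3} is consumed, and the abundance statement of Theorem \ref{lcabun}, whose proof ultimately depends on the finiteness of log pluricanonical representations (Theorem \ref{main-thm}). The only point requiring real care within the argument itself is the invariance of $\kappa_\sigma$ under flips, which must be handled through the common resolution and Nakayama's theory rather than naively.
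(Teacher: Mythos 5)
Your proposal is correct and follows essentially the same route as the paper's proof: dlt blow-up (Theorem \ref{dltblowup}), the minimal model program with ample scaling made available by Conjecture \ref{nonvan3} and Birkar's results (cf.~\ref{27}), preservation of $\kappa$ and $\kappa_\sigma$ in each step, and semi-ampleness of $K_{X'}+\Delta'$ on the minimal model via Theorem \ref{lcabun}, giving $\kappa=\nu=\kappa_\sigma$ there. The only differences are cosmetic: you dispose of the non-pseudo-effective case directly (both invariants are $-\infty$) where the paper runs the MMP to a Mori fiber space, and you spell out, via a common resolution and the negativity lemma, the invariance of $\kappa_\sigma$ that the paper asserts without detail.
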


\begin{proof}
We can assume that $(X, \Delta)$ is a $\mathbb Q$-factorial projective dlt 
pair by replacing it with its dlt blow-up (cf.~Theorem \ref{dltblowup}). 
Let $H$ be a general effective sufficiently ample Cartier 
divisor on $X$. 
We can run the minimal model program 
with scaling of $H$ by \ref{27}.  
Then we obtain a good minimal model by Theorem \ref{lcabun} 
if $K_X+\Delta$ is pseudo-effective. 
When $K_X+\Delta$ is not pseudo-effective, 
we have a Mori fiber space structure. 
In each step of the minimal model program, 
$\kappa$ and $\kappa _{\sigma}$ are preserved. 
So, we obtain $\kappa (X, K_X+\Delta)=\kappa _{\sigma}(X, K_X+\Delta)$. 
\end{proof}

Finally, we explain the importance of 
Theorem \ref{main-thm2} toward the abundance conjecture. 
Let us consider the following two conjectures. 

\begin{conj}[Non-vanishing conjecture]\label{nonvan4}
Let $(X, \Delta)$ be a projective 
lc pair such that $\Delta$ is an $\mathbb R$-divisor. 
Assume that $K_X+\Delta$ is pseudo-effective. 
Then there exists an effective $\mathbb R$-divisor $D$ on 
$X$ such that $K_X+\Delta\sim_{\mathbb{R}} D$. 
\end{conj}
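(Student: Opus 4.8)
The plan is to peel the statement down to its absolute, boundary-free core and then appeal to the structural reductions already available. First I would dispose of the difference between $\sim_{\mathbb R}$ and $\equiv$: by Remark \ref{rem10} (that is, \cite[Theorem 1]{ckp}) the relation $K_X+\Delta\equiv D\geq 0$ already forces $K_X+\Delta\sim_{\mathbb R}D'$ for some effective $D'$, so Conjecture \ref{nonvan4} is equivalent to the weak non-vanishing Conjecture \ref{nonvan3} in each fixed dimension. Hence it suffices to produce an effective $\mathbb R$-divisor numerically equivalent to $K_X+\Delta$.

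Next I would reduce to a convenient model. Applying the dlt blow-up of Theorem \ref{dltblowup} (after the standard approximation of the $\mathbb R$-divisor $\Delta$ by $\mathbb Q$-divisors), I may assume that $(X,\Delta)$ is projective $\mathbb Q$-factorial dlt; this makes the minimal model program with ample scaling of \ref{27} available. The idea is then to run an MMP and argue by induction on the dimension of the lc centers, using adjunction to transport the problem to the lower-dimensional strata $\llcorner\Delta\lrcorner$, where the semi-ampleness and finiteness inputs of this paper --- ultimately Theorem \ref{main-thm} and Proposition \ref{prop3} --- govern the behaviour of $K_X+\Delta$ restricted to the boundary.

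The heart of the matter is the reduction carried out in \cite[Section 8]{dhp} together with \cite{g4}: granting the global ACC conjecture and the ACC for log canonical thresholds, the non-vanishing for projective lc pairs is reduced to the single case in which $X$ is a smooth projective variety and $\Delta=0$. The ACC statements are precisely what keep the coefficients appearing in this induction from accumulating, so that after finitely many steps only the terminal case $(X,0)$ survives.

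The hard part will be exactly that surviving core, namely showing that a smooth projective $X$ with $K_X$ pseudo-effective has $\kappa(X)\geq 0$, equivalently $H^0(X,\mathcal O_X(mK_X))\neq 0$ for some $m>0$. This is the classical non-vanishing problem and is genuinely open in general; it is presently accessible only in low dimensions or under additional positivity or fibration hypotheses, via the Nakayama--Zariski decomposition (cf.~\cite{N}) and analytic $L^2$ methods. I therefore do not expect to settle Conjecture \ref{nonvan4} unconditionally here; the realistic target is the clean reduction to this boundary-free smooth case, in agreement with the statement already announced for Conjecture \ref{intro-nonvani} in the introduction.
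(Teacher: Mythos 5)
This statement is a \emph{conjecture} in the paper: the authors give no proof of it anywhere. They only record (Remark \ref{rem10}, via \cite[Theorem 1]{ckp}) that it follows from the weak non-vanishing Conjecture \ref{nonvan3} in the same dimension, and that by \cite[Section 8]{dhp} and \cite{g4} it reduces, assuming the global ACC conjecture and the ACC for log canonical thresholds, to the case of a smooth projective variety $X$ with $\Delta=0$. Your proposal correctly recognizes this status, reproduces exactly those two reductions, and honestly stops at the genuinely open core case ($K_X$ pseudo-effective $\Rightarrow$ $\kappa(X)\geq 0$ for smooth $X$); on that score it matches everything the paper itself says about the conjecture.

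One caution about your middle paragraph: the plan to run the MMP with ample scaling and induct on the strata of $\llcorner \Delta \lrcorner$ using Theorem \ref{main-thm} and Proposition \ref{prop3} points the logical arrow the wrong way for this paper. Those results serve the \emph{abundance} side, and the termination of the MMP with ample scaling for pseudo-effective $K_X+B$ is itself only available after assuming the weak non-vanishing Conjecture \ref{nonvan3} (see \ref{27} and \cite[Theorems 1.4, 1.5]{birkar}); in Theorem \ref{coj implication} non-vanishing is an \emph{input} to abundance, not a consequence of it. So pursuing that induction to prove Conjecture \ref{nonvan4} would be circular within the paper's framework. Since you drop this thread and rest the actual reduction on \cite{dhp} and \cite{g4}, your proposal stands as a correct account of the state of the art rather than a proof --- which is the most that can be asked of a statement the paper itself leaves open.
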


As we pointed it out in Remark \ref{rem10}, 
Conjecture \ref{nonvan4} in dimension $n$ follows from Conjecture \ref{nonvan3} in dimension $n$. 
For related topics on the non-vanishing conjecture, see \cite[Section 8]{dhp} and \cite{g4}, 
where Conjecture \ref{nonvan4} is reduced to the case when 
$X$ is a smooth projective variety and 
$\Delta=0$ by assuming the global ACC conjecture and 
the ACC for log canonical thresholds 
(see \cite[Conjecture 8.2 and Conjecture 8.4]{dhp}). 

\begin{conj}[{Extension conjecture for dlt pairs (cf.~\cite[Conjecture 1.3]{dhp}}]
\label{c-dlt} 
Let $(X, S+B)$ be an $n$-dimensional projective 
dlt pair such that $B$ is an effective $\mathbb Q$-divisor, 
$\llcorner S+B \lrcorner =S$, $K_X+S+B$ is nef, 
and $K_X+S+B\sim _{\mathbb Q}D\geq 0$ where $S\subset\Supp D$.
Then $$H^0(X,\mathcal O _X(m(K_X+S+B)))\to H^0(S,  \mathcal O _S(m(K_X+S+B)))  $$
is surjective for all sufficiently divisible integers $m\geq 2$. 
\end{conj}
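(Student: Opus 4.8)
The plan is to realize the restriction map as the $H^0$-level map in the long exact cohomology sequence attached to
$$0 \to \mathcal{O}_X(m(K_X+S+B)-S) \to \mathcal{O}_X(m(K_X+S+B)) \to \mathcal{O}_S(m(K_X+S+B)) \to 0,$$
so that surjectivity of the asserted restriction follows once the induced map
$$H^1(X, \mathcal{O}_X(m(K_X+S+B)-S)) \to H^1(X, \mathcal{O}_X(m(K_X+S+B)))$$
is shown to be injective. Writing $m(K_X+S+B)-S = K_X+B+(m-1)(K_X+S+B)$ and noting that $(X,B)$ is klt (since $\llcorner S+B\lrcorner=S$), this injectivity is precisely the type of conclusion furnished by a cohomology injectivity theorem of Kollár–Esnault–Viehweg–Ambro–Fujino type, with twisting divisor a multiple of the nef divisor $K_X+S+B$ together with the effective $D\geq 0$ whose support contains $S$.

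First I would dispose of the case where $K_X+S+B$ is semi-ample, which is exactly the content of Proposition \ref{inj imply ext} as recorded in the introduction: a sufficiently divisible multiple is then base point free, the injectivity theorem applies directly to this free system twisted by $D$, and the $H^1$-injectivity, hence the surjectivity of the restriction, follows formally. Next I would record the purely log terminal case (\cite[Corollary 1.8]{dhp}), where the adjunction boundary $(S,B_S)$ is klt and the standard extension-of-sections machinery applies without invoking semi-ampleness. These two cases pin down the shape of the argument and isolate exactly what must be supplied in the remaining dlt range.

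For the general dlt case the plan is to lift sections from $S$ by an inductive, analytic argument. After a dlt blow-up (Theorem \ref{dltblowup}) and a log resolution, I would stratify $S$ by its lc strata and attempt to extend a given $\omega\in H^0(S,\mathcal{O}_S(m(K_X+S+B)))$ across $X$ via the Ohsawa–Takegoshi-type $L^{2}$ extension and multiplier-ideal technique underlying \cite{dhp}: one builds, at each step, a singular metric on $m(K_X+S+B)$ adapted to $D$ whose curvature is controlled by the nefness of $K_X+S+B$, so that $\omega$ acquires finite $L^{2/m}$ norm and therefore extends. The hypotheses that $K_X+S+B$ is nef and that $S\subset\Supp D$ are what feed the positivity and the non-vanishing needed to initialize and propagate this induction.

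The hard part will be exactly the step that the semi-ample case avoids. Without semi-ampleness there is no free linear system to insert into the injectivity theorem, so the injectivity of the $H^1$-map cannot be obtained formally and must be replaced by the delicate analytic extension whose convergence is the crux of \cite{dhp}. In effect the general statement is of the same depth as the good-minimal-model problem it is designed to feed (compare Theorem \ref{thm110}), so I do not expect a soft proof; the realistic target is to reduce the conjecture, through the exact sequence above and a dlt blow-up, to the semi-ample situation governed by Proposition \ref{inj imply ext}, and to pinpoint precisely which positivity input—an Ohsawa–Takegoshi extension with a singular weight along $D$—is required to close the remaining gap.
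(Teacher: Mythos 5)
The statement you were asked to prove is a \emph{conjecture} in the paper (Conjecture \ref{c-dlt}); the paper offers no unconditional proof, only Proposition \ref{inj imply ext}, which settles exactly the semi-ample case (equivalently, the case conditional on Conjecture \ref{abun}) by the mechanism you describe: realize the restriction as the $H^0$-level map of the evident short exact sequence and kill the $H^1$-obstruction by a cohomology injectivity theorem, using $S\subset\Supp D$ to guarantee that the image of $S$ under the Iitaka fibration is a proper subvariety, so that the relevant linear system $|kf^*g^*A-S'|$ is nonempty for $k\gg 0$. Your account of that case matches the paper's up to one technical refinement worth noting: the paper does not run the sequence on $X$ itself but passes to a log resolution $f:Y\to X$, writes $K_Y+S'+F=f^*(K_X+S+B)+E$ with $E$ effective and $f$-exceptional, proves $f_*\mathcal O_{S'}(E)\simeq \mathcal O_S$ by the relative Kawamata--Viehweg vanishing theorem, and applies the injectivity theorem (\cite[Theorem 6.1]{F-fund}) on the smooth model to $f^*L+E$ --- this is how the dlt singularities are handled rigorously, whereas your $X$-level sequence would require an injectivity theorem directly on the dlt pair. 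Your sketch of the general (non-semi-ample) case via Ohsawa--Takegoshi-type $L^2$ extension is a research program rather than a proof, and you correctly flag it as such; the paper likewise leaves the general conjecture open, recording only the plt case from \cite[Corollary 1.8]{dhp} and the semi-ample case above.
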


In Conjecture \ref{c-dlt}, if $(X, S+B)$ is a plt pair, 
equivalently, $S$ is normal,  
then it is true by \cite[Corollary 1.8]{dhp}.

The following theorem is essentially 
contained in the proof of \cite[Theorem 1.4]{dhp}. 
However, our proof of Theorem \ref{coj implication} 
is slightly different from the arguments in \cite{dhp} 
because we directly use Birkar's result \cite[Theorems 1.4 and 1.5]{birkar} and Theorem \ref{main-thm2}. 

\begin{thm}\label{coj implication} 
Assume that {\em{Conjecture \ref{nonvan4}}} and 
{\em{Conjecture \ref{c-dlt}}} hold true in dimension $\leq n$. 
Then {\em{Conjecture \ref{abun}}} is true in dimension $n$. 
\end{thm}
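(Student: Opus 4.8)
The plan is to argue by induction on $n=\dim X$, so that Conjecture \ref{abun}---and hence, through Theorem \ref{main-thm2}, the abundance conjecture for projective lc pairs---may be assumed in all dimensions $\le n-1$. First I would reduce Conjecture \ref{abun} in dimension $n$ to the log canonical case: given a projective slc pair $(X,\Delta)$ with $K_X+\Delta$ nef, I pass to the normalization $\nu\colon X^{\nu}\to X$ with $K_{X^{\nu}}+\Theta=\nu^{*}(K_X+\Delta)$, which is lc with $K_{X^{\nu}}+\Theta$ nef, so by Theorem \ref{main-thm2} it suffices to prove that $K_{X^{\nu}}+\Theta$ is semi-ample. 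Replacing $(X^{\nu},\Theta)$ by a dlt blow-up (Theorem \ref{dltblowup}), I may assume that $(X,\Delta)$ is a projective $\mathbb Q$-factorial dlt pair with $K_X+\Delta$ nef, and I put $S=\llcorner\Delta\lrcorner$.

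Since $K_X+\Delta$ is nef it is pseudo-effective, so Conjecture \ref{nonvan4} gives $K_X+\Delta\sim_{\mathbb Q}D\ge 0$, and as in \cite{dhp} one may arrange $S\subseteq\Supp D$ so that the hypotheses of Conjecture \ref{c-dlt} are met. Next I would produce semi-ampleness on the boundary: the pair $(S,\Delta_S)$ with $K_S+\Delta_S=(K_X+\Delta)|_S$ is an sdlt $(n-1)$-fold whose log canonical divisor is nef, so by the inductive hypothesis together with Proposition \ref{prop3} the divisor $K_S+\Delta_S$ is semi-ample. This is exactly the point where the new results of this paper are indispensable: because $S$ is in general non-normal, the semi-ampleness of $K_S+\Delta_S$ is deduced from that of its normalization through Proposition \ref{prop3} (equivalently Theorem \ref{main-thm2}), which rests on the finiteness Theorem \ref{main-thm}.

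Now I would invoke Conjecture \ref{c-dlt}: the restriction map $H^{0}(X,m(K_X+\Delta))\to H^{0}(S,m(K_S+\Delta_S))$ is surjective for all sufficiently divisible $m\ge 2$. Since $|m(K_S+\Delta_S)|$ is base point free, surjectivity forces $\operatorname{Bs}|m(K_X+\Delta)|$ to be disjoint from $S$. To pass from this to global semi-ampleness I would run the $(K_X+\Delta)$-minimal model program with scaling of an ample divisor (cf.~\ref{27}), which terminates by \cite[Theorems 1.4, 1.5]{birkar} since Conjecture \ref{nonvan4} (hence the weak non-vanishing Conjecture \ref{nonvan3}) is available in dimension $\le n$; as $K_X+\Delta$ is already nef, $(X,\Delta)$ is its own minimal model, and the freeness along $S$ upgrades it to a good minimal model, whence $K_X+\Delta$ is semi-ample. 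The main obstacle is precisely this last globalization: the boundary carries no information about the klt locus $X\setminus S$, so one must control $K_X+\Delta$ there and glue a semi-ample structure across $S$. This is the technical heart of \cite[Theorem 1.4]{dhp}, and here it is executed by feeding the boundary semi-ampleness and the extension conjecture into Birkar's program; all the remaining steps are the formal inductive bookkeeping of the reductions above, whose only non-formal ingredient is Proposition \ref{prop3}.
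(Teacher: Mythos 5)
There is a genuine gap, and it occurs exactly where you flag ``the main obstacle'': your plan applies Conjecture \ref{c-dlt} directly to the nef dlt pair $(X,\Delta)$ and then hopes to globalize, but neither half of this works as stated. First, the claim that ``one may arrange $S\subseteq \Supp D$'' is unjustified and in general false: $D$ must satisfy $D\sim_{\mathbb Q}K_X+\Delta$, so you cannot add components of $S$ to it, and there need not exist any effective member of the class containing $S$ in its support (for instance $K_X+\Delta\sim_{\mathbb Q}0$ with $S\neq 0$). Second, even granting freeness of $|m(K_X+\Delta)|$ along $S$, the appeal to the MMP with ample scaling is vacuous: since $K_X+\Delta$ is already nef, $(X,\Delta)$ is its own minimal model and Birkar's theorems \cite[Theorems 1.4 and 1.5]{birkar} yield only termination, not semi-ampleness; controlling the base locus on the klt locus $X\setminus S$ is precisely the unsolved part, and deferring it to ``the technical heart of \cite{dhp}'' leaves the proof incomplete, because that heart is a different argument from the one you sketch.

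The paper's actual route circumvents both problems. It never applies the extension conjecture to the nef pair itself. Instead, after the slc-to-lc reduction via Theorem \ref{main-thm2} (which you have correctly), it uses Theorem \ref{thm1}, Theorem \ref{thm4} (through Theorem \ref{lcabun}), and Corollary \ref{cor14} to reduce abundance for lc pairs in dimension $n$ to the statement $\kappa=\kappa_\sigma$ for pseudo-effective \emph{klt} pairs; Conjecture \ref{nonvan4} gives $\kappa\geq 0$, and Kawamata's inductive theorem \cite[Theorem 7.3]{kawamata_pluri} reduces further to $\kappa=0$. There one takes $D\sim_{\mathbb Q}K_X+\Delta$ effective and replaces the boundary by $\Delta'=\Delta-\sum_{d_i\neq 0}\delta_iD_i+D_{\mathrm{red}}$, so that $\llcorner\Delta'\lrcorner=D_{\mathrm{red}}$ lies in the support of an effective divisor in the class \emph{by construction} --- this is how the hypothesis $S\subset\Supp D$ of Conjecture \ref{c-dlt} is legitimately achieved. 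Running the MMP on $(X,\Delta')$ to a minimal model $(Y,\Gamma')$, the extension conjecture is then used only to derive a contradiction: if $S=\llcorner\Gamma'\lrcorner\neq 0$, the restriction map $H^0(Y,\mathcal O_Y(m(K_Y+\Gamma')))\to H^0(S,\mathcal O_S(m(K_Y+\Gamma')))$ is surjective with nonzero target (by induction and Proposition \ref{prop3}, $K_S+\Gamma_S$ is semi-ample --- this is the one place where your proposal matches the paper), yet it is the zero map since $\kappa(Y,K_Y+\Gamma')=0$ forces every section to vanish along $S\subset \Supp D$. Hence $S=0$, the minimal model is klt, $K_Y+\Gamma'\sim_{\mathbb Q}0$, and $\kappa_\sigma(X,K_X+\Delta)=0=\kappa(X,K_X+\Delta)$. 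Your proposal is missing this reduction-to-$\kappa=0$ mechanism entirely, and without it the two steps above cannot be repaired.
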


\begin{proof} 
By Theorem \ref{main-thm2}, it is sufficient to treat log canonical pairs. 
This reduction is crucial for our inductive proof. 
We show the statement by induction on dimension. 
Note that we can freely use the minimal model program with ample scaling 
for projective dlt pairs  by Conjecture \ref{nonvan3} and 
Birkar's results (cf.~\cite[Theorems 1.4 and 1.5]{birkar} and \ref{27}). 
By Theorem \ref{thm1} and Corollary \ref{cor14}, it is sufficient to show the abundance conjecture 
for an $n$-dimensional projective kawamata log terminal pair $(X,\Delta)$ 
such that $K_X+\Delta$ is pseudo-effective. 
By Conjecture \ref{nonvan4}, we see $\kappa(X, K_X+\Delta) \geq 0$ 
(cf.~\cite[Corollary 2.1.4]{choi}). 
By Kawamata's well-known inductive argument (cf.~\cite[Theorem 7.3]{kawamata_pluri}), 
we may assume that $\kappa(X, K_X+\Delta)=0$. 
We take an effective divisor $D$ such that $D \sim_{\mathbb{Q}}K_X+\Delta$. 
We take a resolution $\varphi:Y \to X$ such that 
$\Exc (\varphi)\cup \Supp f_*^{-1}(\Delta+D)$ is a simple normal crossing divisor on $Y$. 
Let $B$ and $E$ be effective $\mathbb{Q}$-divisors satisfying:
$$ K_Y+B=\varphi^*(K_X+\Delta)+E,
$$
and $E$ and $B$ have no common irreducible components. 
Now we know that $\kappa(X, K_X+\Delta)=\kappa(Y, K_Y+B)=0$ and 
$\kappa_{\sigma}(X, K_X+\Delta)=\kappa_{\sigma}(Y, K_Y+B)$. 
Thus, by replacing $X$ with $Y$, we may further assume that $X$ is smooth and 
$\Delta+D$ has a simple normal crossing support. Let 
$$\Delta=\sum \delta_i D_i\ \text{and}\ D=\sum d_iD_i$$
be the irreducible decompositions. 
We put 
$$\Delta'=\Delta-\sum_{d_i \not =0}\delta_i D_i + D_{\mathrm{red}}, $$
where $D_{\mathrm{red}}=\sum _{d_i\ne 0}D_i$.   
Then the effective divisor $\Delta'$ satisfies 
$$\Supp \Delta' =\Supp (\Delta+D),\ \Supp\llcorner \Delta' \lrcorner= \Supp D,$$ 
and $\Supp D=\Supp (\Delta'-\Delta)$ since $(X,\Delta)$ is klt. 
Note that 
$$\kappa(X, K_X+\Delta)= \kappa(X, K_X+\Delta')
$$ and 
$$
\kappa_{\sigma}(X, K_X+\Delta)=\kappa_{\sigma}
(X, K_X+\Delta').$$ 
We take a minimal model 
$$f:(X,\Delta') \dashrightarrow (Y,\Gamma')$$ 
of $(X,\Delta')$. If $(Y,\Gamma')$ is klt, then $\llcorner \Delta' \lrcorner$ is 
$f$-exceptional. Thus we have $K_Y+\Gamma' \sim_{\mathbb{Q}}0$ since 
$\Supp\llcorner \Delta' \lrcorner= \Supp D.$
Therefore,  
$$\kappa_{\sigma}(X, K_X+\Delta)=\kappa_{\sigma}(X, K_X+\Delta')=0.$$ 
It is a desired result. 
So, from now on, we assume that $S:=\llcorner \Gamma' \lrcorner \not =0$. 
Then, by Conjecture \ref{c-dlt}, it holds that 
$$H^0(Y,\mathcal O _Y(m(K_Y+\Gamma')))\to H^0(S,  \mathcal O _S(m(K_Y+\Gamma')))  $$
is surjective for all sufficiently divisible integers $m\geq 2$. 
By the hypothesis of the induction, 
it holds that 
$K_S+\Gamma_S=(K_Y+\Gamma')|_{S}$ is semi-ample. 
Note that the pair $(S, \Gamma _S)$ is an sdlt pair.  
In particular, 
$H^0(S,  \mathcal O _S(m(K_Y+\Gamma'))) \not=0$. 
However, since $\Supp \llcorner \Delta' \lrcorner= \Supp D$ and $\kappa(Y, K_Y+\Gamma')=0$, 
$$H^0(Y,\mathcal O _Y(m(K_Y+\Gamma')))\to H^0(S,  \mathcal O _S(m(K_Y+\Gamma')))$$ 
is a zero map.  It is a contradiction. 
Thus we see that $S=0$. 
Therefore, we obtain $\kappa(X, K_X+\Delta)=\kappa_{\sigma}(X, K_X+\Delta)$. 
\end{proof}

We have a generalization 
of \cite[Theorem 1.4]{dhp} as a corollary of Theorem \ref{coj implication}. 
For a different approach to the existence of good minimal models, see 
\cite{birkar2}. 

\begin{cor}[{cf.~\cite[Theorem 1.4]{dhp}}]\label{general14}
Assume that {\em{Conjecture \ref{nonvan4}}} and {\em{Conjecture \ref{c-dlt}}} 
hold true in dimension $\leq n$. 
Let $f:X\to Y$ be a projective 
morphism between quasi-projective varieties. 
Assume that $(X, \Delta)$ is an $n$-dimensional 
dlt pair such that 
$K_X+\Delta$ is pseudo-effective over $Y$. 
Then 
$(X, \Delta)$ has a good minimal model 
$(X', \Delta')$ over 
$Y$. 
\end{cor}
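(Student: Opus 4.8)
The plan is to produce a minimal model by running the minimal model program and then to show it is good by invoking the abundance statement Theorem~\ref{coj implication}. First I would replace $(X,\Delta)$ by a dlt blow-up (Theorem~\ref{dltblowup}) so that $(X,\Delta)$ is $\mathbb Q$-factorial dlt; this changes neither the hypotheses nor the existence of a good minimal model over $Y$. Since Conjecture~\ref{nonvan4} implies Conjecture~\ref{nonvan3} (as $\sim_{\mathbb R}$ implies $\equiv$), the weak non-vanishing conjecture holds in dimension $\leq n$, and so by Lemma~\ref{lem11} and the discussion in \ref{27} Birkar's Theorems~1.4 and~1.5 of \cite{birkar} apply. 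Running the $(K_X+\Delta)$-MMP with scaling of a sufficiently ample divisor over $Y$ therefore terminates, because $K_X+\Delta$ is pseudo-effective over $Y$, and yields a $\mathbb Q$-factorial dlt minimal model $(X',\Delta')$ over $Y$; writing $f:X'\to Y$ for the structure morphism, $K_{X'}+\Delta'$ is $f$-nef.

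It remains to promote $f$-nefness to $f$-semi-ampleness, that is, to see that $(X',\Delta')$ is good. After a standard compactification reducing to the case where $Y$ is projective, I would do this by checking that $K_{X'}+\Delta'$ is $f$-nef and $f$-log abundant and then applying the relative base-point-free theorem, Theorem~\ref{thm11}. This is the point where Theorem~\ref{coj implication} enters: restricting $K_{X'}+\Delta'$ to the generic fibre of $f$, and to the generic fibres of the normalizations of the lc centers of $(X',\Delta')$, produces nef log canonical divisors on projective dlt pairs of dimension $\leq n$, to which the abundance conjecture applies by Theorem~\ref{coj implication}. Hence these restrictions are semi-ample, in particular abundant, which is exactly the content of $f$-log abundance, and Theorem~\ref{thm11} then shows that $K_{X'}+\Delta'$ is $f$-semi-ample.

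The hard part will be the passage to the generic fibre. On the one hand, abundance must be read over the non-algebraically-closed function field $K(Y)$; I would handle this by base change to $\overline{K(Y)}$ together with a spreading-out argument, abundance being insensitive to such extensions of the ground field. On the other hand, the dimension bookkeeping must be watched: when $\dim Y>0$ the fibres have dimension $<n$ and only lower-dimensional instances of Theorem~\ref{coj implication} are invoked, whereas when $\dim Y=0$ the generic fibre is $X'$ itself and the full $n$-dimensional statement is needed. Once $f$-log abundance is secured, Theorem~\ref{thm11} delivers the $f$-semi-ampleness of $K_{X'}+\Delta'$, so that $(X',\Delta')$ is a good minimal model of $(X,\Delta)$ over $Y$, as required.
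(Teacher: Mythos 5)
Your route coincides with the paper's: pass to a $\mathbb Q$-factorial dlt model, observe that Conjecture~\ref{nonvan4} gives Conjecture~\ref{nonvan3} and hence, via Lemma~\ref{lem11} and \ref{27} (Birkar's \cite[Theorems 1.4 and 1.5]{birkar}), the MMP with ample scaling over $Y$ terminates with a minimal model $(X',\Delta')$; then upgrade relative nefness to relative semi-ampleness by establishing $f$-log abundance on generic fibres through the abundance statement of Theorem~\ref{coj implication} in dimension $\leq n$ and feeding this into Theorem~\ref{thm11}. This is exactly the paper's argument (the paper phrases the second half as ``the relative version of Theorem~\ref{thm1} and the induction on dimension''), and your generic-fibre bookkeeping, including the base change to $\overline{K(Y)}$ and the remark that $\dim Y=0$ is where the full $n$-dimensional case of Theorem~\ref{coj implication} is consumed, is a correct unpacking of what the paper leaves implicit.

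There is, however, one genuine omission: you never address the case where $\Delta$ (hence $\Delta'$) is an $\mathbb R$-divisor, which the statement permits --- Conjecture~\ref{nonvan4} is formulated for $\mathbb R$-divisors, and the whole of Section~\ref{sec5} works in that generality. Your argument breaks down there as written, because Theorem~\ref{coj implication} proves Conjecture~\ref{abun}, which is stated only for $\mathbb Q$-divisors, so the restrictions to generic fibres that you want to call semi-ample need not be covered by it. The paper closes this gap in one sentence: after reaching the minimal model $(X',\Delta')$ with $K_{X'}+\Delta'$ nef over $Y$, one uses Shokurov's polytope to write $K_{X'}+\Delta'$ as a positive combination of finitely many $K_{X'}+\Delta'_i$ with $\Delta'_i$ $\mathbb Q$-divisors, each nef over $Y$ and each handled by the $\mathbb Q$-case (see the proof of Theorem 3.1 in \cite{fujino-gongyo}); you should add this reduction. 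A smaller caveat: your ``standard compactification reducing to the case where $Y$ is projective'' is not innocuous, since compactifying while preserving nefness of $K_{X'}+\Delta'$ over the boundary is essentially the log canonical closure problem (cf.~\cite{hacon-xu-lc-closure}); the paper is equally terse on this point, and the cleaner reference is Subsection~\ref{subsec41}, where Theorem~\ref{thm-hx} is noted to yield the relative statements without projectivity hypotheses.
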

\begin{proof}
By Conjecture \ref{nonvan4} with Lemma \ref{lem11}, 
we can run the minimal model program with ample scaling 
(cf.~\ref{27}). 
Therefore, we can construct a minimal model 
$(X', \Delta')$ over $Y$. 
By Theorem \ref{coj implication}, 
$K_{X'}+\Delta'$ is semi-ample when $Y$ is a point and 
$\Delta'$ is a $\mathbb Q$-divisor. 
By the relative version of Theorem \ref{thm1} and the induction on dimension, 
we can check that $K_{X'}+\Delta'$ is semi-ample over $Y$ when 
$\Delta'$ is a $\mathbb Q$-divisor.  
If $\Delta'$ is an $\mathbb R$-divisor, 
then we can reduce it to the case when $\Delta'$ is a $\mathbb Q$-divisor 
by using Shokurov's polytope (see, for example, the proof of Theorem 3.1 in \cite{fujino-gongyo}) 
and obtain that $K_{X'}+\Delta'$ is semi-ample over $Y$. 
It is a standard argument. 
\end{proof}

We close this paper with an easy observation. 
Conjecture \ref{c-dlt} follows from Conjecture \ref{abun} 
by a cohomology injectivity theorem. 

\begin{prop}\label{inj imply ext} 
Assume that {\em{Conjecture \ref{abun}}} is 
true in dimension $n$. Then {\em{Conjecture \ref{c-dlt}}} holds true in dimension $n$.
More precisely, in {\em{Conjecture \ref{c-dlt}}}, if $K_X+S+B$ is semi-ample, 
then the restriction map is surjective for every $m\geq 2$ such that 
$m(K_X+S+B)$ is Cartier. 
\end{prop}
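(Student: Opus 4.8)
The plan is to reduce everything to the semi-ample case and then read off surjectivity from a cohomology injectivity theorem. In Conjecture \ref{c-dlt} the divisor $K_X+S+B$ is assumed nef, so if Conjecture \ref{abun} holds in dimension $n$ then $K_X+S+B$ is semi-ample (note that $(X,S+B)$ is dlt, hence lc, hence slc, and $S+B$ is a $\mathbb Q$-divisor). Thus it suffices to prove the second assertion: under the extra hypothesis that $K_X+S+B$ is semi-ample, the restriction map is surjective for every $m\ge 2$ with $m(K_X+S+B)$ Cartier.

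First I would write down the ideal-sheaf sequence of $S$ twisted by the line bundle $\mathcal O_X(m(K_X+S+B))$:
$$
0\to \mathcal O_X(m(K_X+S+B)-S)\to \mathcal O_X(m(K_X+S+B))\to \mathcal O_S(m(K_X+S+B))\to 0.
$$
Passing to cohomology, the desired surjectivity of $H^0(X,\mathcal O_X(m(K_X+S+B)))\to H^0(S,\mathcal O_S(m(K_X+S+B)))$ is equivalent to the vanishing of the connecting homomorphism, hence to the injectivity of
$$
H^1(X,\mathcal O_X(m(K_X+S+B)-S))\to H^1(X,\mathcal O_X(m(K_X+S+B))),
$$
the map induced by multiplication by the canonical section cutting out $S$.

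The heart of the matter is the identity
$$
m(K_X+S+B)-S=K_X+B+(m-1)(K_X+S+B).
$$
Here $\llcorner B\lrcorner=0$ and $(X,S+B)$ is dlt, so $(X,B)$ is klt and in particular has no lc center; moreover $(m-1)(K_X+S+B)$ is semi-ample because $m\ge 2$ and $K_X+S+B$ is semi-ample. Therefore, setting $L=m(K_X+S+B)-S$ and $D=S$, we have $L-(K_X+B)\sim_{\mathbb Q}(m-1)(K_X+S+B)$ semi-ample while $\Supp D$ contains no lc center of $(X,B)$, so Fujino's cohomology injectivity theorem (cf.~\cite{F-fund}) applies to the multiplication map $H^q(X,\mathcal O_X(L))\to H^q(X,\mathcal O_X(L+D))$ for all $q$; taking $q=1$ gives exactly the injectivity required above.

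The step that needs care is the passage to the klt pair $(X,B)$ together with the precise form in which the injectivity theorem is invoked: since $S$ need not be Cartier (and $X$ need not be $\mathbb Q$-factorial), I must make sure the subsheaf in the exact sequence is correctly identified with $\mathcal O_X(m(K_X+S+B)-S)$ and that the version of the injectivity theorem being cited is the one valid for lc (equivalently dlt) pairs with the reduced boundary serving as the locus of restriction. I expect this bookkeeping—rather than any genuinely new idea—to be the only real obstacle, and it is settled once the correct statement of the injectivity theorem is quoted.
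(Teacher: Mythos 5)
Your overall skeleton (use Conjecture \ref{abun} to get semi-ampleness, then derive surjectivity from injectivity on $H^1$ via the ideal-sheaf sequence of $S$) matches the paper's, but the step carrying all the content is wrong: the version of the injectivity theorem you invoke does not exist. Fujino's injectivity theorem (\cite[Theorem 6.1]{F-fund}) requires, in addition to $L\sim_{\mathbb R}K_X+\Delta+H$ with $H$ semi-ample and $D$ not containing any stratum, the hypothesis that $tH\sim_{\mathbb R}D+D'$ for some $t>0$ and some effective $D'$ --- the divisor you add must be subordinate to a multiple of the semi-ample twist. The condition ``$\Supp S$ contains no lc center of the klt pair $(X,B)$'' alone is far from sufficient (already $H^1(\mathbb P^1,\mathcal O(-2))\to H^1(\mathbb P^1,\mathcal O(-1))$ fails to inject, with $L=K_X$, $H=0$, $D$ a point). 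Symptomatically, your argument never uses the hypothesis $S\subset \Supp D$ of Conjecture \ref{c-dlt}, and without that hypothesis the statement you are proving is simply false: take $X=\mathbb P^1$, $B=0$, $S=\{0\}+\{\infty\}$. Then $(X,S)$ is dlt, $K_X+S\sim 0$ is semi-ample, yet $H^0(X,\mathcal O_X(m(K_X+S)))\simeq \mathbb C\to H^0(S,\mathcal O_S)\simeq \mathbb C^2$ is never surjective; here no effective $D\sim_{\mathbb Q}K_X+S$ satisfies $S\subset \Supp D$, so the conjecture's hypothesis fails. Any proof that does not use $S\subset\Supp D$ cannot close.

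The paper's proof supplies exactly the missing subordination condition, and this is where $S\subset\Supp D$ enters: writing $K_X+S+B\sim_{\mathbb Q}g^*A$ with $g:X\to Z$ the Iitaka fibration and $A$ ample on $Z$, the hypothesis $S\subset\Supp D$ forces $g(S)\subsetneq Z$, hence $|k f^*g^*A-S'|\ne \emptyset$ for $k\gg 0$ on a log resolution --- precisely the condition $tH\sim D+D'$ with $D=S'$ demanded by \cite[Theorem 6.1]{F-fund}. Note also that the paper does not apply the injectivity theorem on the singular dlt $X$: that theorem is proved for (embedded) simple normal crossing pairs, so the paper passes to a log resolution $f:Y\to X$ with $K_Y+S'+F=f^*(K_X+S+B)+E$, $E$ effective and $f$-exceptional, and descends the conclusion using $f_*\mathcal O_{S'}(E)\simeq \mathcal O_S$, which is proved by the relative Kawamata--Viehweg vanishing theorem. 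So the ``bookkeeping'' you deferred is real but routine; the genuine gap is the misquoted injectivity theorem and the unused hypothesis $S\subset\Supp D$, and repairing it amounts to inserting the Iitaka-fibration argument above.
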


\begin{proof}
Let $f:Y\to X$ be a projective birational morphism from a smooth variety $Y$ such that 
$\Exc (f)\cup \Supp f_*^{-1}(S+B)$ is a simple normal crossing divisor on $Y$ and 
that $f$ is an isomorphism over 
the generic point of every lc center of the pair $(X, S+B)$. 
Then we can write 
$$
K_Y+S'+F=f^*(K_X+S+B)+E
$$ 
where 
$S'$ is the strict transform of $S$, $E$ is an effective Cartier divisor, 
$F$ is an effective $\mathbb Q$-divisor 
with $\llcorner F\lrcorner=0$. 
Note that $E$ is $f$-exceptional. 
We consider the short exact sequence 
$$
0\to \mathcal O_Y(E-S')\to \mathcal O_Y(E)\to \mathcal O_{S'}(E)\to 0. 
$$ 
By the relative Kawamata--Viehweg vanishing theorem, 
$R^if_*\mathcal O_Y(E-S')=0$ for every $i>0$. 
Therefore, 
$$
\mathcal O_X\simeq f_*\mathcal O_Y(E)\to f_*\mathcal O_{S'}(E)
$$ 
is surjective. 
Thus, we obtain $f_*\mathcal O_{S'}(E)\simeq \mathcal O_S$. 
Let $m$ be a positive integer such that 
$m(K_X+S+B)$ is Cartier with $m\geq 2$. 
We put $L=m(K_X+S+B)$. 
It is sufficient to prove that 
$$
H^0(Y, \mathcal O_Y(f^*L+E))\to H^0(S', \mathcal O_{S'}(f^*L+E))
$$ 
is surjective. 
By Conjecture \ref{abun}, $K_X+S+B$ is semi-ample. 
Let $g:X\to Z$ be the Iitaka fibration associated to 
$K_X+S+B$. 
Then there is an ample $\mathbb Q$-Cartier 
$\mathbb Q$-divisor $A$ on $Z$ such that 
$K_X+S+B\sim _{\mathbb Q}g^*A$. We note that 
\begin{align*}
(f^*L+E-S')-(K_Y+F)&=(m-1)f^*(K_X+S+B)\\
&\sim _{\mathbb Q}(m-1)f^*g^*A. 
\end{align*} 
Since $S\subset \Supp D$ and 
$K_X+S+B\sim _{\mathbb Q}D\geq 0$, we have 
$g\circ f(S')\subsetneq Z$. 
Note that $g:X\to Z$ is the Iitaka fibration associated to $K_X+S+B$. 
Therefore, it is easy to see 
that 
$$
H^i(Y, \mathcal O_Y(f^*L+E-S'))\to H^i(Y, \mathcal O_Y(f^*L+E))
$$ 
is injective for every $i$ because 
$|kf^*g^*A-S'|\ne \emptyset$ for $k\gg 0$ 
(see, for example, \cite[Theorem 6.1]{F-fund}). 
In particular, 
$$
H^1(Y, \mathcal O_Y(f^*L+E-S'))\to H^1(Y, \mathcal O_Y(f^*L+E))
$$ 
is injective. Thus we obtain that 
$$
H^0(Y, \mathcal O_Y(f^*L+E))\to H^0(S', \mathcal O_{S'}(f^*L+E))
$$ 
is surjective. 
It implies the desired surjection 
$$
H^0(X, \mathcal O_X(m(K_X+S+B)))\to H^0(S, \mathcal O_S(m(K_X+S+B))). 
$$
We finish the proof. 
\end{proof}

Proposition \ref{inj imply ext} shows that Conjecture \ref{c-dlt} is a reasonable conjecture in the 
minimal model theory. 


\end{document}